\def\be{\begin{equation}}
	\def\en{\end{equation}}
\def\bee{\begin{equation*}}
	\def\ene{\end{equation*}}
\def\min{\text{min}}
\def\max{\text{max}}
\definecolor{darkgreen}{rgb}{.1,.6,0}
\newcommand{\Z}{\mathbb{Z}}
\newcommand{\N}{\mathbb{N}}
\newtheorem{theorem}{Theorem}[section] % 1st argument is your name for it
\newtheorem{lemma}[theorem]{Lemma}     % 2nd argument is what is printed
\newtheorem{corollary}[theorem]{Corollary}
\newtheorem{proposition}[theorem]{Proposition}
\newtheorem{condition}[theorem]{Condition}
\theoremstyle{definition}
\newtheorem{definition}[theorem]{Definition}
\newtheorem*{ack*}{Acknowledgment}
\newtheorem{remark}[theorem]{Remark}
\numberwithin{equation}{section}
\title[Polynomial Shape Adics]{Polynomial shape adic systems are inherently expansive}
\author{Sarah Frick}
\address{Department of Mathematics, Furman University, Greenville, SC 29613 USA}
\email{sarah.frick@furman.edu}
\author{Karl Petersen}
\address{Department of Mathematics,
	CB 3250 Phillips Hall,
	University of North Carolina,
	Chapel Hill, NC 27599 USA}
\email{petersen@math.unc.edu}
\author{Sandi Shields}
\address{College of Charleston, 66 George St., Charleston, SC 29424-0001 USA}
\email{shieldss@cofc.edu}
\date{\today}
	\subjclass[2020]{37B10, 37B02, 28D05}
\keywords{Bratteli-Vershik system, expansiveness}
\begin{document}

	\begin{abstract}
{To study any dynamical system it is useful to find a partition that allows essentially faithful encoding (injective, up to a small exceptional set) into a subshift. 
	Most topological and measure-theoretic systems can be represented by Bratteli-Vershik (or adic, or BV) systems. 
	So it is natural to ask when can a BV system be encoded essentially faithfully. 
	We show here that for BV diagrams defined by homogeneous positive integer multivariable polynomials, and a wide family of their generalizations, which we call polynomial shape diagrams, for every choice of the edge ordering the coding according to initial path segments of a fixed finite length is injective off of a negligible exceptional set.}
	\end{abstract}
\maketitle

	\section{Introduction}\label{sec:intro}
	Measure-preserving and topological dynamical systems have representations as successor (Vershik) maps on path spaces of Bratteli diagrams, \cites{HPS1992, Vershik1981Markov, verliv}.
	Whether they can be coded as subshifts is the question of expansiveness. 
	The combinatorial nature of these symbolic dynamical representations provides {a 
 viewpoint that can suggest new questions and new methods to deal with them.} 
 Previously \cites{xman,MelaPetersen2005} X. M\'{e}la showed that in the famous 
 Pascal adic system, with the left-right ordering, {the orbits of infinite paths 
 from the root that do not eventually follow only minimal or only maximal edges} can 
 be faithfully coded by the partition determined by the first  edge.  
 {Frick \cite{Frick2009}*{Theorem 4.3} extended this kind of expansiveness to {\em limited scope} systems, including those defined by polynomials in one variable (in our current setting, two variables).} 
 We showed 
 \cite{FPS2017} that for {\em every} ordering {of the Pascal diagram}, the set of {paths not in the orbit of a minimal or maximal path} can be 
 faithfully coded by the first three edges, so that this system is {\em inherently 
 expansive}. Our aim here is to extend this result to a fairly wide {family} of diagrams, {which we call {\em polynomial shape},}
 {that includes the diagrams defined by homogeneous multivariable polynomials, the Pascal, Euler, and reverse Euler diagrams in higher dimensions, and more, {generalizing systems previously studied in, for example, \cites{Kerov1989,verliv,MelaPetersen2005, xman,Mela2006, Bailey2006,BKPS2006,FP2008, FP2010, Strasser2011, FrickOrmes2013}.}  
 	Recent papers related to expansiveness 
 of Bratteli-Vershik systems include
	\cites{dm2008,BezuglyiKwiatkowskiMedynets2009,FPS2017,AFP,Berthe2017,FPS2020}.
	
The next section begins by recalling basic definitions and terminology about Bratteli diagrams and Bratteli-Vershik systems. 
	When coding orbits it is necessary to discard those of maximal and minimal paths, and our proofs require paths with dense orbits. 
	Proposition \ref{prop:minmax} establishes that, under mild conditions on the diagram, for each ordering the {\em exceptional set} (one-sided orbits of maximal and minimal paths together with the nondense orbits) is meager (first category). 
	The set of invariant measures depends only on the tail relation, and so is independent of the choice of ordering.
    For the diagrams dealt with in this paper, the exceptional set also has measure zero for every fully supported ergodic invariant probability measure ---see Remark \ref{rem:measures} and
  Proposition \ref{prop:measures}.
 
	In Section \ref{sec:systems} we define the {\em polynomial diagrams} and their generalization {\em polynomial shape diagrams} that are the subject of this paper, 
	 establish terminology and notation for keeping track of connections between vertices, and identify {\em distinguished source vertices} which will be useful as pivots for moving around in the diagram.  
{In the proof of the main theorem it will be essential to observe the intersections of {\emph{source sets}}  
	{(see Definition \ref{def:sourceset})}
%	\ref{sec:defs} 
	of certain vertices,} so in Section \ref{sec:covered} we determine exactly which vertices at a given level have their source sets inside those of other vertices at that level. 
To prove that orbits of paths not in the exceptional set can be distinguished from one another by observing their initial finite segments of a fixed length, one necessarily proceeds by contradiction. 
In Section \ref{sec:chains}, on the basis of the hypothesis that there exist pairs of paths that cannot be so distinguished, we build machinery (chains and links) within the diagram that connect the vertices through which these paths pass at two adjacent levels. 
Although all of this is fictional, since the 
ultimate aim is to show that none of it is 
possible, in the final Section \ref{sec:main} 
the constructions do produce a contradiction 
to how the pair of paths would have to move 
through the diagram, thereby proving the main 
theorem (Theorem \ref{thm:main}): Every 
polynomial shape diagram is inherently 
expansive.

{Now we summarize with more detail the main steps of the argument.}

{The idea is to show that if a {\em depth $i$ pair} (\ref{def:depth}) did indeed exist, then we could construct a long linearly ordered {\em chain} (\ref{def:chain}) consisting of {\em uncovered} (\ref{def:covered}) {\em shared} and {\em splitting vertices} (\ref{def:chain}).}
%	We use \emph{straight chains} (Definition \ref{def:chain}), made of shared vertices on level $i$ and splitting vertices on level $i+1$, to introduce a notion of linearity, in the sense that the \emph{splitting vertices} (Definition \ref{def:chains}) of a straight chain are distinct and ordered.
%	We show that if a depth $i$ pair $x, x'$ did indeed exist, we could \tg{use the orbits of the two paths and some of their \emph{shared vertices} (Definition \ref{def:chain}) to construct a straight chain in which the splitting vertices are \emph{uncovered} (the source set of a splitting vertex is not contained in the source set of any other vertex).} 
We then argue that this leads to a contradiction, as follows:

Assuming that the orbits of both paths are dense, eventually one of them, say $x'$, must be the first to change vertices at level $i+2$. 	
{For $i$ sufficiently large, the constructed chain will be long enough so that {(1)} the first vertex at level $i+1$ that $x'$ passes through after switching vertices {twice} at level $i+2$ is still part of the chain; 
	and (2) if $z$ is the second vertex at level $i+2$ through which the forward orbit of $x'$ passes while in the chain, then there is a vertex in its source set $S(z)$ that is not one of the splitting vertices met by the orbit of $x'$ during the time that it moves through the chain. (This is argued in the proof of the main theorem using Lemma \ref{lem:SourceSet}.)
	Hence, the orbit of $x'$ does not meet every vertex in $S(z)$ before changing vertices again at level $i+2$.  But according to  the definition of the Vershik map, it must. Therefore there can be no depth $i$ pair.

	{To produce the straight chain used in this proof, we first show that there exist a large region of uncovered vertices at level $i+1$}
	{(\ref{lem:LinkLemma}, \ref{prop:covered})} 
	{and a time when the orbits of $x$ and $x'$ meet distinct vertices in this region,  $w_0$ and $w_1$, respectively  (\ref{lem:GettingToTheGivenMV}).}
	The vertices $w_0$ and $w_1$ are then chosen to be the first and second splitting vertices of our chain.
	
	Since  $w_1$ is uncovered, there exists a distinguished direction and \emph{distinguished vertex} (\ref{def:dsv}) {$u$} in $S(w_1)\setminus S(w_0)$.
	To find the next splitting vertex of the chain, we consider the first time (forward or backward) that the orbit of $x'$ meets {$u$}, while still at $w_1$.  
	Since $x$ and $x'$ have the same $i$-coding, the orbit of $x$ must switch vertices at level $i+1$ by this time, while the orbit of $x'$ does not leave $w_1$ until afterwards.  
	The fact that $w_0$ is {in our desired region of uncovered vertices} (in particular, $S(w_0)$ is not contained in $S(w_1)$) is then sufficient (and necessary) {for us} to show that when the orbit of $x$ leaves $w_0$, it cannot switch to $w_1$ (Lemma \ref{lem:LinkLemma}).

	Hence, the next vertex $w_2$ at level $i+1$ met by the orbit of $x$ is distinct from both $w_0$ and $w_1$. 
	We let $w_2$ be the third vertex in our chain.
	{Since the region of uncovered vertices containing $w_0$ and $w_1$ is large enough to contain $w_2$, we can repeat the process, using $w_1$ and $w_2$,  with the roles of $x$ and $x'$ reversed.} 
	{We continue in this manner until we produce a sufficiently long chain whose splitting vertices are met, in an alternating fashion, by the orbits of $x$ and $x'$, respectively.
		The properties of the distinguished source vertices and our region ensure that each new splitting vertex is in our region and distinct from all previous splitting vertices in the chain.} 
	{Thus the chain has the properties necessary to complete the proof as sketched above} {(Theorem \ref{thm:main})}.

	To highlight the essentials of the argument even more, we now very briefly describe how it applies to the two-dimensional Pascal system, which is defined by the polynomial in two variables $p(x_1,x_2) = x_1 + x_2$.
	The proof of the main theorem, when applied to this system, would require $i \geq 20$, but in fact we need only
	$i \geq 3$.
	
	If there were a depth $i$ pair $x$, $x'$, then at some time their orbits would follow different paths into level $i+1$, and
	since there is at most one edge between any two vertices, they would meet different vertices $w_0$ and $w_1$ at level $i+1$ at this time, which
	we can assume is time 0.
	
	Note that each row of the two-dimensional Pascal diagram has only two covered vertices and they are at the ends,
	so every vertex after the root is in the source set of at most one covered vertex.
	Since $x$ and $x'$ meet the same vertex at level $i$, at least one of $w_0$ or $w_1$ is uncovered.
	
	Consider the case where one of these vertices, say  $w_0$, is covered. 
	Replacing $T$ with $T^{-1}$ if necessary, assume that the orbit of $x'$ meets {the (automatically distinguished) vertex} $u \in S(w_1) \setminus S(w_0)$ at some time $m>0$, while still at $w_1$. 
	{Since $m>0$, it is then necessarily the case that the edge the path $x'$ follows into $w_1$ is minimal according to the edge ordering while the edge that $T^mx'$ follows from $u$ to $w_1$ is maximal.} 
	%Since every uncovered vertex has exactly two vertices in its source set, 
	{Then $T^mx$ also meets $u$.}
	Since $u \notin S(w_0)$, the vertex $w_2$ at level $i+1$ {met by $T^mx$} is clearly distinct from $w_0$.
	It also follows that $w_2 \neq w_1$, since $T^mx$ follows a minimal edge into $w_2$ and the minimal edge into $w_1$ has source in $S(w_0)$.	 
	
	Note that $w_1$ shares one of its source vertices with the covered vertex $w_0$ and the other with $w_2$.
	So since $i+1 \geq 3$,  $w_2$ is clearly uncovered. 
	Hence, we can now add to the current chain (whose splitting vertices are $w_0$, $w_1$ and $w_2$) {by using the (distinguished) vertex in $S(w_2) \setminus S(w_1)$ to extend the subchain (link, $w_1$ to $w_2$) to a new vertex, $w_3$.}
	%(The process is similar to that described in ref{lem:LinkLemma}, but the full strength of the hypothesis is not needed for the Pascal, so the extension can take place for lower values of $i$, such as  $i=3$, than are specified in that hypothesis.)
	We can continue to extend our chain, {moving in a fixed direction away from $w_0$, at step $k+1$ adding a splitting vertex $w_{k+1}$ whose source set intersects that of $w_k$ and which is distinct from $w_0, \dots, w_k$}, until its last splitting vertex is covered (i.e. until we reach the other edge of the diagram). 
	{The existence of the chain forces a consistent ordering (left to right or right to left) on the edges entering the vertices at level $i+1$.}
	Each of the uncovered vertices at level $i+1 \geq 4$ (of which there are at least three) is a splitting vertex in our chain, and
	our construction ensures that collectively the orbits of $x$ and $x'$ meet them all, in an alternating manner, before one of them meets the last (covered) splitting vertex in the chain.
	We claim that this does not happen until after the orbit of $x$ has changed vertices twice at level $i+2$.

	%To  see this, note that adjacent splitting vertices in the chain share a source vertex and nonadjacent ones do not.
	Looking at the diagram structure, it is easy to see that two splitting vertices can both be in the source set of a vertex $z$ at level $i+2$ only if they are adjacent in the chain
	{(equivalently, next to each other on row $i+1$ of the diagram).} 
	{Because the orbits of $x$ and $x'$ are skipping every other splitting vertex in the chain, every time one of them changes splitting vertices it changes vertices at level $i+2$.}
	Furthermore, the chain has length at least 5 (counting the two covered vertices at its beginning and end), so the orbit of $x$, {beginning at} $w_0$, will meet at least 3 splitting vertices.
	Hence, the orbit of $x$ changes vertices at level $i+2$ at least twice before reaching the end of the chain.

	{If $z$ is the second vertex at level $i+2$ through which the forward orbit of $x$ passes while in the constructed chain, $|S(z)| = 2$.}
	{The orbit of $x$ meets $z$ and a splitting vertex {(at level $i+1$) that} is adjacent to two other splitting vertices in the chain.
		 It does not meet either of these adjacent vertices while moving through the chain, since it skips {alternate splitting vertices} in the chain,	
		However one of these splitting vertices must be in $S(z)$, contradicting the fact that the orbit of $x$ must meet every vertex in $S(z)$ before changing vertices again at level $i+2$. 
		So there can be no depth $i$ pair.}

	If we start near the middle of the diagram, with $w_0$ and $w_1$ both uncovered, 
	we build the chain in both directions, using forward time for one direction and backward for the other (starting with {(distinguished)} vertices in $S(w_1) \setminus S(w_0)$ and $S(w_0) \setminus S(w_1)$ respectively). 
	%Since there are two covered vertices and at least three uncovered at level $i+1$, this produces a chain of length at least 5.
	Replacing $x$ and $x'$ with paths in their orbits that simultaneously meet two splitting vertices of the chain, the first of which is covered, we will be in the previously considered case.

{\section{Background}\label{sec:background}}

We now recall (in fact partly quote) from \cite{PetersenShields2023} 
the standard basic definitions about Bratteli diagrams, ordered 
Bratteli diagrams, and Bratteli-Vershik systems.
	For further terminology and background, see 
 \cites{HPS1992,GPS1995,Durand2010,BK2016,FPS2017} 
 and their references.

	A Bratteli diagram (here also just called a {\em diagram}) is a countably infinite, directed, graded graph. 
		For each $n=0,1,2,\dots$ there is a finite nonempty set of vertices $\mathcal{V}_n$.	$\mathcal V_0$ consists of a single vertex, called the ``root".  
The set of edges is the disjoint union of finite nonempty sets $\mathcal E_n, n\geq 0$, with $\mathcal E_n$ denoting the set of edges with source in $\mathcal V_{n}$ and target in $\mathcal{V}_{n+1}$. 
					The {\em source map} $s: \mathcal E_n \to \mathcal V_{n}$ and {\em target map} $t:\mathcal E_n \to \mathcal{V}_{n+1}$ are defined as usual.
						The {\em source set} of a vertex $w \in \mathcal V_{n+1}$ is $S(w)= s(t^{-1}\{w\})$. 
						(To simplify notation, we will write $t^{-1}\{w\} = t^{-1}w$.) 
						We define $\mathcal V=\cup_n\mathcal{V}_n$, $\mathcal{E}=\cup_n\mathcal{E}_n$ and denote the diagram by $\mathcal B=(\mathcal{V},\mathcal{E})$.
											
Every vertex other than the root has at least one incoming edge and at least one outgoing edge, and 
there can be multiple edges between pairs of vertices. 

{The space $X$ 
is the set of infinite paths (sequences $x=x_0 x_1\dots$, each $x_i$ 
in $\mathcal{E}_{i}$) starting at the root 
at level $i=0$. 
For a path $x$ we denote by $v_i(x)$ the vertex of the 
path at level $i$.
In other words,  $v_i(x)=s(x_i)=t(x_{i-1})$. 
$X$ is a compact metric space when we specify that 
two paths have distance $1/2^n$ if they agree from levels $0$ to $n$ 
and disagree leaving level $n$. We will avoid degenerate situations 
and only consider diagrams for which $X$ is homeomorphic to the Cantor 
set. The cylinder sets $x_{[0,n]}=\{y \in X: y_i = x_i, i=0, \dots, n\}$, 
for $n \geq 0$ and $x \in X$, are clopen sets that generate the topology. }

The edges entering each vertex can be totally ordered
	by 
 specifying a map $\xi: \mathcal{E}\to \mathbb{N}$ such that for $n>0$ and $w\in \mathcal{V}_n$, $\xi$ restricted to $t^{-1}w$
 is a bijective map with range $\{1, 2,...,|t^{-1}w|\}$. The diagram together with such an order is called an {\em ordered Bratteli diagram}.
	
	Two paths $x$ and $y$ are {\em comparable}, or {\em tail equivalent}, if 
   they are cofinal: there is a smallest $N>0$ such that $x_N=y_N$ for all $n \geq 
   N$. 
	In this case $x_{N-1} \neq y_{N-1}$; we agree that $\xi(x)<\xi(y)$ 
 if $\xi(x_{N-1}) < \xi(y_{N-1})$, and $x>y$ if not.
The set of {\em minimal paths}, meaning those all of whose edges are minimal into all 
of their vertices, will be denoted by $X_{\min}$, and similarly the set of {\em 
maximal paths} will be denoted by $X_{\max}$. 
	The Vershik, or adic, map $T=T_\xi$ is defined from the set of nonmaximal paths 
 to the set of nonminimal paths by mapping each path $x$ to its {\em successor}, the 
 smallest $y>x$. 
	The pair $(X,T)$ is called a {\em Bratteli-Vershik system}.

 We do not always need an ordering or transformation in order to use dynamical concepts and terminology. 
	Given a diagram, we think of the tail relation as the ``orbit" relation: two points are in the same ``orbit" if they agree from some level downward. 
	If $x$ and $x'$ are tail related, then for each ordering $\xi$ there is $n \in \Z$ such that $T_\xi ^nx =x'$; thus every tail-relation orbit is a $T_\xi$-orbit.
	
	An {\em invariant set} is one that is a union of orbits, i.e., it is saturated with respect to the tail relation. 
	Note that each orbit is at most countably infinite. 
	An \emph{invariant measure} $\mu$ is one that for each vertex $w$ assigns equal measure to all the cylinder sets determined by paths from the root to $w$: if $v_n(x)=v_n(y)=w$, then $\mu(x_{[0,n]})=\mu(y_{[0,n]})$. 

 We denote by $X'_\xi$ the set consisting of the backward $T_\xi$-orbits of maximal paths and the forward orbits of minimal paths. Thus 
 $X'_\xi$ consists of the paths whose edges are all eventually minimal or all 
 eventually maximal. 
 
 Denote by $X_0$ the set of paths whose orbits are not dense.
 Then $X_0$ has measure $0$ for every fully supported ergodic invariant measure on $X$. 
 Unlike the situation for the two-dimensional Pascal system with left-right order \cite[Proposition 2.3]{MelaPetersen2005}, even for a polynomial shape system the set $X_0$ need not coincide with $X'_\xi$ for any order. 
 See Proposition \ref{prop:orbits} for a precise description of $X_0$ in the case of polynomial shape systems.
 
\begin{definition}\label{def:genericpaths}
	Given an ordering $\xi$ of a Bratteli diagram as above, we denote by $X(\xi)$ the 
 set of paths in the path space $X$ that have a dense orbit and are not in $X'_\xi$. 
 The {\em exceptional set} is $X'_{\xi}\cup X_0=X \setminus X(\xi)$.
\end{definition}
	
We say that two vertices $v,w$ are {\em connected} if there is a finite downward directed path of 
consecutive edges in the diagram that begins at one of the vertices and ends at the 
other. If for every $n$ and every pair of vertices $v_1,v_2 \in \mathcal V_n$ there 
are $m > n$ and a vertex $w \in \mathcal V_m$ that is connected to both $v_1$ and 
$v_2$, we say the diagram is {\em connected}. 

 {We say that a diagram is {\em topologically transitive} if given {nonempty} open sets $U,V$, there is a point (infinite path) in $U$ which is tail equivalent to some point in $V$. 
 	Just as for homemorphisms on compact metric spaces, there are several conditions equivalent to topological transitivity. 
 	{Recall that a set is called {\em meager}, or {\em first category}, if it is a countable union of nowhere dense sets, and the complement of a meager set is called {\em comeager} or {\em residual}.}
  \begin{proposition}\label{prop:toptrans}
 	The following properties of a Bratteli diagram are equivalent:\\
 	(1) The diagram is connected, in the sense just defined.\\
 	(2) The diagram is topologically transitive.\\
 	(3) Every proper closed {invariant set} is nowhere dense.\\
 	(4) There is a point with dense orbit.\\
 	(5) The set of points with dense orbit is comeager. 
 \end{proposition}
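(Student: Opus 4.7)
The plan is to establish the cyclic chain of implications $(1) \Rightarrow (2) \Rightarrow (3) \Rightarrow (5) \Rightarrow (4) \Rightarrow (1)$, following the standard template for equivalent formulations of topological transitivity, with the tail-equivalence relation playing the role of a group action. For $(1) \Rightarrow (2)$: given nonempty open $U$ and $V$, pick cylinders $x_{[0,n]} \subseteq U$ and $y_{[0,n]} \subseteq V$ at a common depth, set $v = v_{n+1}(x)$ and $v' = v_{n+1}(y)$, and use connectedness to obtain $m > n+1$ and $w \in \mathcal{V}_m$ with downward paths $\alpha$ from $v$ to $w$ and $\beta$ from $v'$ to $w$. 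Fixing any infinite continuation $\gamma$ starting at $w$, the concatenations $x_0 \cdots x_n \alpha \gamma \in U$ and $y_0 \cdots y_n \beta \gamma \in V$ agree from level $m$ onward and are thus tail-equivalent. For $(2) \Rightarrow (3)$: if a closed invariant set $F$ is not nowhere dense, its interior $U$ is a nonempty open subset of $F$; applied to $U$ and any nonempty open $V$, (2) yields a point of $U \subseteq F$ tail-equivalent to a point of $V$, so by invariance $V \cap F \neq \emptyset$, whence $F$ is dense and, being closed, equals $X$.

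For $(3) \Rightarrow (5)$, fix a countable basis $\{C_k\}$ of nonempty cylinders and let $[C_k]$ denote its tail-saturation, i.e., the union of all tail-equivalence classes meeting $C_k$. The set $[C_k]$ is invariant by definition, and the key observation is that it is open: if $y$ is tail-equivalent to $z \in C_k$ with $y_j = z_j$ for all $j \geq N$ (choosing $N$ exceeding the depth of $C_k$), then every $y' \in y_{[0,N-1]}$ is tail-equivalent to $z_0 \cdots z_{N-1} y'_N y'_{N+1} \cdots$, which lies in $C_k$ and whose concatenation is a valid path because $t(y_{N-1}) = t(z_{N-1})$. Hence $X \setminus [C_k]$ is a proper closed invariant set, so by (3) each $[C_k]$ is dense open; the intersection $\bigcap_k [C_k]$, which is precisely the set of points with dense orbit, is therefore comeager by the Baire category theorem (applicable since $X$ is compact metric). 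Then $(5) \Rightarrow (4)$ is immediate because comeager sets in a Baire space are nonempty, and for $(4) \Rightarrow (1)$, a point $x$ with dense orbit meets the open set of paths through any prescribed vertex, so given $v_1, v_2 \in \mathcal{V}_n$ we obtain $y^{(1)}, y^{(2)}$ tail-equivalent to $x$ with $v_n(y^{(i)}) = v_i$; their mutual tail-equivalence produces a common vertex $w = v_m(y^{(1)}) = v_m(y^{(2)})$ at some level $m > n$, and the initial segments of $y^{(1)}, y^{(2)}$ from level $n$ to $m$ supply the required downward paths from $v_1$ and $v_2$ to $w$.

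The only step requiring genuine thought is the openness of the saturation $[C_k]$ in $(3) \Rightarrow (5)$, which hinges on the ability to reroute the initial segment of a path through a different intermediate segment while preserving the vertex at the meeting level; the remaining implications are formal consequences of the definitions and the Baire category theorem.
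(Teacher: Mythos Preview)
Your proof is correct and follows essentially the same approach as the paper: both arguments hinge on showing that the tail-saturation of each basic cylinder is open (equivalently, that each $A_\alpha=\{x:\mathcal O(x)\cap[\alpha]=\emptyset\}$ is closed) and then invoking the Baire category theorem. The only differences are organizational---you run the cyclic chain $(1)\Rightarrow(2)\Rightarrow(3)\Rightarrow(5)\Rightarrow(4)\Rightarrow(1)$ and appeal to (3) to obtain density of the saturations, whereas the paper declares $(1)\Leftrightarrow(2)\Leftrightarrow(3)$ immediate and proves $(1)\Rightarrow(5)$ directly by using connectedness to construct a point in $A_\alpha^c\cap[\beta]$; your openness argument is in fact slightly cleaner, since it does not invoke connectedness where none is needed.
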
}
 \begin{proof}
 {The equivalence of (1), (2), and (3) is immediate. 
 	Since $X$ is a Baire space, (5) implies (4), and clearly (4) implies (2). 
 	We show now that (1) implies (5).}
 	
 {Given a finite path $\alpha=\alpha_0 \dots \alpha_n$ starting at the root, denote by $[\alpha]$ the cylinder set consisting of all paths that begin with $\alpha$:
 	\be 
 	[\alpha]=\{x \in X: x_{[0,n]}=\alpha\}.
 	\en
 	{If $x$ is a path whose orbit $\mathcal O(x)$ is not dense}, then there is such a finite path $\alpha$ for which $\mathcal O(x) \cap [\alpha] = \emptyset$.
 {For any finite path  $\alpha$ starting at the root}, let
 	\be 
 	A_\alpha = \{x\in X: \mathcal O(x) \cap [\alpha] = \emptyset\}.
 	\en
 	Then each $A_\alpha$ is invariant (saturated) for the tail equivalence relation. Moreover, we will show that it is closed and nowhere dense, and thus the set $\cup_\alpha A_\alpha$ of points without a dense orbit is meager.}
 	
 	{Suppose that $y \in A_\alpha^c$ and find $z \in \mathcal O(y)$ such that $z \in [\alpha]$. 
 	By connectedness of the diagram, there is a vertex $w$ at some level $m > n$ such that $v_n(y)$ and $v_n(z)$ are connected to $w$. 
 	Now any $y' \in X$ close enough to $y$ that $y'_{[0,m]}=y_{[0,m]}$ also satisfies $\mathcal O(y') \cap [\alpha] \neq \emptyset$, so $y' \in A_\alpha^c$. Therefore $A_\alpha^c$ is open.}
 	
 	{Now we will show that $A_\alpha$ is nowhere dense by showing that its complement $A_\alpha^c$ is dense. 
 	Let $\beta$ be any finite path starting at the root. 
 	Extending either $\alpha$ or $\beta$ if necessary, we may assume that they end at vertices $v_1$ and $v_2$ at the same level $n > 0$. 
 	By connectedness of the diagram, $\alpha$ and $\beta$ have extensions $\alpha'$ and $\beta'$ that end at the same vertex $w$ at some level $m >n$.
 	Extend $\beta'$ to an infinite path $y'$.
 	Then $\mathcal O(y') \cap [\alpha] \neq \emptyset$, and $\mathcal O(y') \cap [\beta] \neq \emptyset$, 
 	so that $y' \in A_\alpha^c \cap [\beta]$.}
 \end{proof}

	\begin{proposition}\label{prop:minmax}
		Suppose that a diagram is connected and every vertex has at least two outgoing edges. 
		Then for every edge ordering the set of orbits of minimal paths is meager, and  
		similarly the set of orbits of maximal paths is meager; 
		therefore, 
		{for every ordering $\xi$ the set} 
		$X(\xi)$ as defined above {(Definition \ref{def:genericpaths})} is comeager.
	\end{proposition}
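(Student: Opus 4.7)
The plan is to show that $X'_\xi$ is meager, and then to invoke Proposition~\ref{prop:toptrans}(5) to conclude that $X_0$ is meager as well, so that $X(\xi) = X \setminus (X'_\xi \cup X_0)$ is comeager. By the symmetry between the minimal and maximal orderings it suffices to treat the set $M$ of eventually minimal paths, which we write as $M = \bigcup_{n \geq 0} M_n$ with $M_n = \{x \in X : x_k \text{ is minimal into } v_{k+1}(x) \text{ for all } k \geq n\}$. We would first observe that $M$ is exactly the union of the forward $T_\xi$-orbits of minimal paths: given $x \in M_n$, iterating $w \mapsto s(\text{minimal edge into } w)$ backward from $v_n(x)$ produces a minimal path $m$ with $x = T_\xi^k m$ for some $k \geq 0$. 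Each $M_n$ is clearly closed, so everything reduces to showing that $M_n$ has empty interior.

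For this we argue by contradiction. Suppose a cylinder $[\alpha] \subseteq M_n$, and, extending $\alpha$ if necessary (every vertex has outgoing edges), assume $\alpha$ ends at a vertex $v^*$ at some level $N \geq n$. Then every infinite continuation of $\alpha$ must consist of minimal edges from level $N$ onward, so every downward path starting at $v^*$ is all-minimal. This forces a rigid structure on the descendants of $v^*$: letting $D_k$ denote the set of vertices at level $N+k$ reachable from $v^*$ by a downward path, every outgoing edge of any $v \in D_k$ must be the minimal incoming edge at its target (otherwise some extension of $\alpha$ would leave $M_n$), so by uniqueness of minimal incoming edges and the assumption that $v$ has at least two outgoing edges, these edges must land in at least two distinct vertices of $D_{k+1}$. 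In particular $D = \bigcup_k D_k$ is closed downward, and every $w \in D_{k+1}$ has the source of its unique minimal incoming edge inside $D_k$.

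The contradiction then comes from connectedness. Since $|D_1| \geq 2$, pick distinct $v_1, v_2 \in D_1$; by connectedness they share a downward descendant $w$ at some level $m > N+1$. Both downward paths from $v_1$ and $v_2$ to $w$ must lie in $D$ (as $D$ is closed downward), hence consist of minimal edges. The crucial observation is that an all-minimal downward path is uniquely determined by its endpoint, read backward by the forced rule ``pass to the source of the unique minimal incoming edge,'' so the starting vertex at level $N+1$ depends only on $w$. This forces $v_1 = v_2$, the desired contradiction.

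The hardest part, and the only place connectedness is used, is this last step: the exponential growth $|D_k| \geq 2^k$ obtained from the local tree structure is by itself not enough, because $|\mathcal V_{N+k}|$ is allowed to grow arbitrarily fast, so a purely counting contradiction is unavailable. It is the backward-determinism of minimal-edge paths, used in tandem with connectedness, that forces two candidates in $D_1$ to coincide. Once $M_n$ is known to be nowhere dense, $M$ is meager; the same argument with ``maximal'' replacing ``minimal'' handles the other half of $X'_\xi$, and Proposition~\ref{prop:toptrans}(5) supplies meagerness of $X_0$, yielding comeagerness of $X(\xi)$.
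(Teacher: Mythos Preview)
Your proof is correct and follows essentially the same line as the paper's: decompose the eventually-minimal paths as a countable union of closed sets $M_n$ and show each is nowhere dense by combining the two-outgoing-edges hypothesis with connectedness to produce two downward paths to a common vertex $w$, where uniqueness of the minimal incoming edge at $w$ gives the contradiction (equivalently, exhibits a nonminimal edge). The paper phrases this as a direct construction rather than by contradiction, which costs it a small extra case split on whether the two outgoing edges share a target, but the content is the same.
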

	\begin{proof}
			Fix an arbitrary edge ordering. 
		For each $k \geq 1$ let 
		\begin{equation} 
		U_k = \{x \in X: \text{ the edge entering } v_k(x) \text{ is not minimal}\}.
		\end{equation}
		Then the set of orbits of minimal paths (the set of paths that eventually follow only minimal edges) is 
		\begin{equation}
	{	E = \bigcup_n \bigcap_{k \geq n} U_k^c.}
		\end{equation}
	   See Figure \ref{fig:E}.

  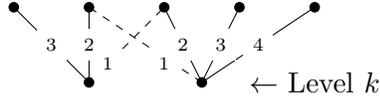
\begin{figure}
      \centering
      \begin{tikzpicture}
      \foreach \n in {0,1}{
      \draw (1,0) -- (\n,1);
      \fill(\n,1) circle (2pt);
      }
      \draw[dashed](1,0)--(2,1);
      \foreach \n in {2,3,4}{
      \draw(2.5,0)--(\n,1);
          \fill(\n,1) circle (2pt);}
          \draw[dashed](2.5,0)--(1,1);
          \fill (1,0) circle (2pt);
          \fill (2.5,0) circle(2pt);
          \node at (4,0){$\leftarrow$ Level $k$};
          \node[fill=white] at (.5,.5){\tiny{$3$}};
          \node[fill=white] at (1,.5){\tiny{$2$}};
          \node[fill=white] at (1.25,.25){\tiny{$1$}};
          \node[fill=white] at (2,.25){\tiny{$1$}};
          \node[fill=white] at (2.25,.5){\tiny{$2$}};
          \node[fill=white] at (2.75,.5){\tiny{$3$}};
          \node[fill=white] at (3.25,.5){\tiny{$4$}};
           \end{tikzpicture}
      \caption{The edge ordering is given by the numbers on the edges. Any infinite paths which contain a dashed edge are in $U_k^c$ and the rest 
      are in $U_k$.}
      \label{fig:E}
  \end{figure}
		To show that $E$ is meager, then, it suffices to show that each of the closed sets 
		$\cap_{k \geq n} U_k^c$ is nowhere dense, equivalently that each of the open sets 
		\begin{equation}
		\mathcal{E}_n = \bigcup_{k \geq n} U_k
		\end{equation}
		is dense.
		
		Given $n \geq 0$ and a basic open set $B$ defined by an initial segment $y$ from the root to  a vertex $v_1$ at a level $m_1$, we find a point $x \in \mathcal{E}_n \cap B$ as follows. 
		Since the sets $\mathcal{E}_n$ are decreasing, we may assume that $n > m_1$.
		Extend $y$ arbitrarily to arrive at a vertex $v_2$ at level $n$.
		The vertex $v_2$ has at least two outgoing edges. 
		Let us suppose first that all edges leaving $v_2$ arrive at the same terminal vertex, $w$. 
		Then one of these edges, call it $e_1$, is not minimal. 
		Extend the finite path downward from $v_2$ along $e_1$ and then arbitrarily downward from $w$, producing a path $x \in \mathcal{E}_{n+1} \cap B\subseteq \mathcal{E}_n\cap B$. 
		
		If there are two edges leaving $v_2$ that arrive at different vertices $w_1, w_2$, 
		since the diagram is connected there is a first level $m_2>n+1$ on which there exists a vertex $w$ that connects to both $w_1$ and $w_2$, along paths $p_1,p_2$ down from $v_2$. 
		One of these two paths, call it $p_1$, enters $w$ along a nonminimal edge. 
		In this case we extend the path downward from $v_2$ along $p_1$ and then arbitrarily downward from $w$ to produce the path $x \in \mathcal{E}_{m_2}\cap B\subseteq \mathcal{E}_n \cap B$. See Figure \ref{fig:EnB}.
		
		This proves the set $X_{\xi}'$ is meager; then, by Prop 2.2, $X(\xi)$ is comeager.
		\end{proof}

  \begin{figure}
      \centering
      \begin{tikzpicture}
          \fill (2,9) circle (3pt);
          \draw(2,9)--++(-.75,-.75)--++(.25,-.25)--++(-.75,-.75)--++(.25,-.25)--++(-1,-1)--++(1,-1)--++(-.25,-.25)--++(.75,-.75)--+(-1,-1);
          \draw(1.5,4)--++(1,-1);
          \node[fill=white] at (1,7.5){$B$};
          \fill(0,6) circle (3pt);
          \node at (1.5,6){$\leftarrow$ Level $m_1$};
          \fill(1.5,4) circle (3pt);
          \node at (-.2,5.75){$v_1$};
          \node at (1.5,3.65){$v_2$};
          \node at (3,4){$\leftarrow$ Level $n$};
          \fill(.5,3) circle (3pt);
          \fill(2.5,3) circle (3pt);
          \node at(0,3){$w_1$};
          \node at (3,3){$w_2$};
          \draw(.5,3)--++(.25,-.25)--++(-.5,-.5)--++(.25,-.25)--++(-.25,-.25)--++(.5,-.5)--++(-.25,-.25);
          \draw(2.5,3)--++(.25,-.25)--++(-.5,-.5)--++(.25,-.25)--++(.25,-.25)--++(-.5,-.5)--++(.25,-.25)--++(-1,-1);
          \draw[dashed](.5,1)--+(1,-1);
          \fill(.5,1) circle (3pt);
          \fill(2.5,1) circle (3pt);
          \fill(1.5,0) circle (3pt);
          \node at (3,0){$\leftarrow$ Level $m_2$};
          \node[fill=white] at(0.5,1.5){\tiny{$p_1$}};
          \node[fill=white] at(2.5,2){\tiny{$p_2$}};
          \node at (1.5,-.25){$w$};
      \end{tikzpicture}
      \caption{The dashed edge into $w$ is non minimal. The path $x\in \mathcal{E}_n\cap B$ follows the path $p_1$ into $w$ through the non minimal edge.}
      \label{fig:EnB} 
  \end{figure}
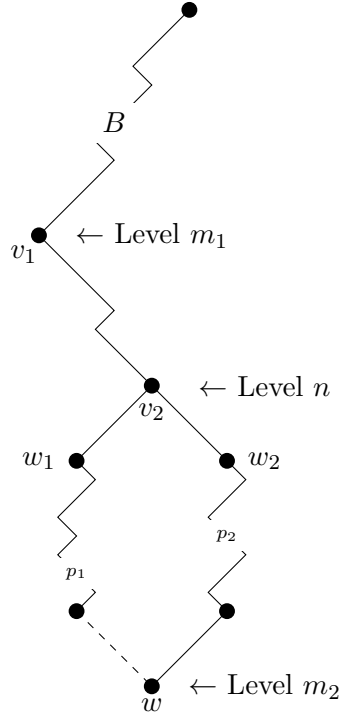

  {
  \begin{remark}\label{rem:measures}
  	The preceding argument shows that for every $k$,  the set of minimal paths $\cap_{k \geq 1} U_k^c$ is nowhere dense (but it can be uncountable---see \cite[Example 7.2]{FPS2017}), similarly for the maximal paths. 
  	Using the argument in \cite[Lemma 6.2]{Berthe2017}, we can show 
  	{(in Proposition \ref{prop:measures})} that 
  	 in any polynomial shape diagram (see Definition \ref{def:polyshape}) the sets of maximal and minimal paths (along with their orbits) are also negligible in the measure-theoretic sense
  	 \and therefore $X_0 \cup X_\xi'$ is meager and has measure $0$ for every fully supported ergodic invariant measure on $X$.
  	Alternatively, in many cases the ergodic measures can be identified explicitly (see M\'{e}la 
  	\cites{xman, Mela2006} and the extension by Frick \cites{Bailey2006, 
  		Frick 2009}, showing that the measure of every cylinder set is given by the product of weights that come from a finite set in $(0,1)$), and then one can apply an argument such as in 
  	\cite[Proposition 2.1]{FPS2017}. 
  	We defer the details to Proposition \ref{prop:measures} in Section \ref{sec:systems}, after the necessary definitions, notations, and properties have been presented.
  \end{remark}
}

We turn now to establish terminology and notation for the main question addressed in this paper, the effectiveness of coding of orbits by finite paths, edges, or vertices.
For each $k \geq 1$ denote by $A_k$ the finite alphabet whose elements are the finite paths (segments, strings of edges) from the root to level $k$. 
	For each $a \in A_k$, the set $[a]=\{x \in X: x_0 \dots x_{k-1}=a\}$ is a clopen cylinder set, and $\mathcal P_k=\{ [a]:a \in A_k\}$ is a partition of $X$ into clopen sets.  
	The map $\pi _k:X \to A_k^\Z$ is defined by $(\pi_kx)_n=a$ if and only if $T^nx \in [a]$.
	{The sequence $\pi_kx$ is called the {\em $k$-coding of $x$.}}
	We denote by $\Sigma_k$ the closure of $\pi_k X$. Denote by $\sigma$ the shift transformation on $A_k^\Z$.   
	Then $\pi_k:(X,T) \to (\Sigma_k, \sigma)$ is a Borel measurable map that commutes with the transformations and $(\Sigma_k, \sigma)$ is a symbolic dynamical system. 

\begin{comment}	
	\begin{definition}\label{def:bb}
		{I THINK WE DON'T USE THIS, SO MAYBE CAN BE COMMENTED OUT?}\\
		For each vertex $v$ at level $k \geq 1$, denote by $P(v)$ the set of paths from the root to $v$, define the {\em dimension} of $v$ to be $\dim v=|P(v)|$, and concatenate the elements of $P(v)$, in their {lexicographic order} (defined by the edge ordering), as $q_1 \dots _q{\dim v}$ (so that $[q_{j+1}]=T[q_j]$ for $j=1, \dots ,\dim v -1$). We call the string $q_1 \dots p_{\dim v}$ on symbols from the alphabet $A_k$ the {\em $k$-basic block} at $v$ {and denote it by $B_k(v)$}.
		{Each path from the root to level $k$ determines, by truncation, for each $i<k$ a unique path from the root to level $i$, so there is natural factor map $A_k \to A_i$ which converts $B_k(v)$ to a string, {which we denote by $B_i(v)$ and call the $i$-basic block at $v$}, of the same length on the alphabet $A_i$.}
	\end{definition}
\end{comment}	

\begin{definition}\label{def:codingbyvertices}
{The {\em coding by vertices at level $j<n$ of a vertex $w$ at level $n$}, denoted by $C_j(w)$, is defined as follows. 
	List in the order determined by their lexicographical ordering in the diagram the paths entering $w$ from vertices at level $j$ as $\{p_1,\dots ,p_r\}$ and denote the source of (the first edge of) $p_i$ by $u_i, i=1,\dots ,r$.
	Then $C_j(w)=u_1 \dots u_r$. 
	(For $j=n-1$, this is the ``morphism read on $\mathcal V_{n}$" in \cite{Durand2010}*{p. 328}).\\
	The {\em coding by edges from level $n-1$ to level $n$ of the orbit of a path $x$} is the sequence $Edge_n(x)=((T^mx)_{n-1}, m\in \mathbb Z)$.\\
  The {\em coding by vertices at level $n$ (or by $\mathcal V_n$) of the orbit of a path} $x$ is the sequence $Vert_n(x)=(v_n(T^mx), m \in \mathbb Z)=((Vert_n(x))_m, m \in \Z)$. }
  \end{definition}
  \begin{remark}\label{rem:codingbyvertices}
{(1) The mapping that takes a path $x$ to its coding by vertices at level $n$ intertwines the action of $T$ on $X$ with the shift on $\mathcal V_n^\Z$.\\
	\noindent
	(2) It might happen that there are $w_1,w_2 \in \mathcal V_n$ with $w_1 \neq w_2$ but, for example, $C_{n-1}(w_1)=C_{n-1}(w_2)$.\\
	\noindent
	(3) Note that whenever $m$ is such that $(T^m x)_{n-1}$ is the minimal edge entering $w=v_n(T^mx)$ from level $n-1$, then the coding of the orbit of $x$ by $\mathcal V_n$ begins a string of repeats of $w$ of length equal to the number of paths from the root to $w$, which we denote by $\dim w$. 
	This is because the coding by vertices $C_{n-1}(v_n(T^mx))=u_1 u_2 \dots$ of $v_n(x)$ by vertices at level $n-1$ expands into the codings of the $u_i$ by vertices at level $n-2$, and each of those vertices has its coding by vertices of the previous level expand to codings by vertices of its previous level, and so on. 
	Stated slightly differently, if we list in their lexicographical order in the diagram the paths from the root entering $w$ as $\{q_1,\dots ,q_{\dim w}\}$, whenever $(T^mx)_{n-1}$ is the minimal edge to $w$ from level $n-1$, 
	as we apply $T$ the orbit of $x$ uses each $q_i$ in turn, and so 
	in the coding by $\mathcal V_n$ of the orbit of $x$ we see a string of length $\dim w$ of repeated $w$'s with the ``dot" to its immediate left (the path being coded passes through the vertex just to the right of the dot).
	}
\end{remark}

In \cite{AFP} a (measure-theoretic) rank one system was defined to be {\em essentially $k$-expansive} if the partition $\mathcal P_k$ generates the full sigma-algebra under the transformation $T$. 
Since we are dealing here with more generality than rank one, in particular not assuming a fixed invariant measure, we need a slightly different definition. 
Also, we restrict attention to points that have dense full two-sided orbits. 

\begin{definition}\label{def:EE}
	We say that a topologically transitive $BV$ system (with ordering $\xi$) is {\em (bilaterally) expansive} if there is an $i \geq 1$ such that the map $\pi_i: (X(\xi),T) \to (\Sigma_i,\sigma)$ is injective, in which case
	the system is called {\em (bilaterally) $i$-expansive}.
\end{definition}
\begin{definition}\label{def:inex}
	We say that a topologically transitive diagram is {\em inherently expansive} if for every ordering the resulting system is (bilaterally) expansive.
	\end{definition} 

\begin{remark}\label{rem:recog}
	The concept of expansiveness for Bratteli-Vershik systems is tantamount to recognizability for substitutions, morphisms, or sequences of morphisms: see \cites{BezuglyiKwiatkowskiMedynets2009, Berthe2017, Beal2023} and their references. 
		These papers focus on systems of bounded width (``finite alphabet rank", possibly after telescoping), and recognizability for aperiodic points, while most of the examples we are interested in have unbounded width.
	The sequence of morphisms associated with a Bratteli-Vershik system is {{\em recognizable}} (at all levels, for full orbits) {in the sense of \cite{Berthe2017}*[Definitions 2.1 and 4.1] if (briefly)}
	for each $n \geq 1$ the coding of an orbit by vertices at level $n$ uniquely determines (by ``desubstitution") its coding by vertices at level $n+1$.
	 We claim that a system is recognizable at all levels for full orbits if and only if it is $1$-expansive for full orbits.
  
	Let us note first that for an ordered Bratteli diagram with at 
 least two vertices per level after the root, for each $i \geq 0$ 
 codings of a path by edges between levels $i$ and $i+1$, and 
 codings by vertices at levels $i$ and $i+1$, determine one 
 another.
 
 In each case we assume that a coding also specifies the location of the central coordinate (``dot") in the given sequence of edges or symbols.
	A sequence of edges $(T^mx)_i, m \in \Z$, clearly determines the sequences of vertices $v_i(T^mx)=u_m(x)$ and $v_{i+1}(T^mx)=w_m(x)$.
	Conversely, given the sequences {(on $m$)} 
 $v_i(T^mx)=u_m(x)$ and {$v_{i+1}(T^mx)=w_m(x)$} and the position of the dot in each sequence, {in a string of repeats of a symbol $w_m(x)$ the path is entering that vertex along the edge whose label is given by the position of the dot. Thus} the codings by vertices determine for each $m \in \Z$ a unique edge $e \in \mathcal E_i$ such that $s(e)=u_m(x)$ and $t(e)=w_m(x)$. 
 %In the case of multiple edges, the edge coding still determines the vertex coding, but the vertex coding no longer determines the edge coding.

 {Suppose that} $X \setminus X_\xi$ is $1$-expansive and we are given the coding $(v_n(T^mx), m \in \Z)$, of a path $x$ by vertices at some level $n \geq 1$. 
	By Remark \ref{rem:codingbyvertices} (3), each coding of $x$ by $\mathcal V_{n}$ determines its coding by $\mathcal V_{n-1}$, using the ``substitution read on $\mathcal V_{n}$", including the position of the ``dot''.
	Thus the coding by $\mathcal V_{n}$ determines the codings by vertices on all levels $j \leq n$, and hence, by the paragraph above, the coding by edges for all levels before level $n$, {in particular the $1$-coding,} hence (by $1$-expansiveness) the path $x$ itself, and hence the coding by vertices at level $n+1$.

     Conversely, if $X \setminus X'_\xi$ is recognizable at all levels, then for each path $x \in X \setminus X'_\xi$ 
		the coding by vertices at each level $n \geq 1$ of the 
  orbit of $x$ determines its coding by vertices at level $n+1$, 
  {so we may just 
  reverse the process in the previous paragraph:
  If $X \setminus X'_\xi$ is recognizable at all levels, then coding by edges to level $1$, which determines coding by vertices at level $1$, by recognizability determines the coding by vertices at all levels, hence the coding by edges at all levels, hence the edge traversed by each path at every level, hence each path itself, so that the system is $1$-expansive.}

	\end{remark} 
If a system is not $i$-expansive, there exist pairs of points in $X \setminus X'_\xi$ such that at all times in their orbits they agree on their first $i$ edges. The following definition is from \cite{dm2008}.

\begin{definition}\label{def:depth}
	 Let $i\geq 0$. We say two paths $x, x' \in X \setminus X'_\xi$ form a pair of {\em depth $i$} if they have the same $i$-coding but not the same $(i+1)$-coding. \end{definition}

	\begin{proposition}\label{prop:distinct}
Consider an ordered Bratteli diagram for which there are at least two vertices at every level after the root.
		 Then for each $i \geq 1$ and every depth $i$ pair, $x$ and $x'$ in $X(\xi)$, there is $m \in \Z$ such that 
 $v_{i+1}(T^mx) \neq v_{i+1}(T^mx')$.
\end{proposition}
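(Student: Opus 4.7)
The plan is to argue by contradiction. Assume that
\begin{equation*}
v_{i+1}(T^m x) = v_{i+1}(T^m x') \quad \text{for every } m \in \Z,
\end{equation*}
and derive that the $(i+1)$-codings of $x$ and $x'$ are identical, contradicting the depth $i$ hypothesis. Since $x$ and $x'$ already share the same $i$-coding, what must be shown is that for every $m \in \Z$ we have $(T^m x)_i = (T^m x')_i$, i.e.\ they use the same edge between level $i$ and level $i+1$ at every time. The sources of these two edges agree (equal $i$-coding) and the targets agree (assumption); so the only obstruction is that there may be several parallel edges between the common source and common target, and we must rule out that $x$ and $x'$ pick different ones.

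The main step is a structural fact about the Vershik map: whenever the orbit \emph{enters} a vertex $w \in \mathcal V_{i+1}$, it does so along the minimal edge into $w$. Precisely, I would show that if $m_0 \in \Z$ satisfies $v_{i+1}(T^{m_0 - 1} x) \neq w = v_{i+1}(T^{m_0} x)$, then $(T^{m_0} x)_i$ is the minimal edge in $t^{-1} w$ from level $i$. Writing $y = T^{m_0 - 1} x$ and $y' = T y$ and letting $N$ be the smallest index with $y_N$ non-maximal, the successor rule forces $y'_0 \dots y'_{N-1}$ to be the minimal path to $v_N(y')$, and $y'_N$ to be the successor of $y_N$ at the common target $v_{N+1}(y)$. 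The assumption $v_{i+1}(y) \neq v_{i+1}(y')$ rules out $N \leq i$ (since in that case $v_{i+1}$ is preserved), so $N \geq i+1$; consequently $y'_i$ lies on the minimal path to $v_N(y')$ and is therefore the minimal edge entering $w$.

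Next I would use the ``run'' structure from Remark \ref{rem:codingbyvertices}(3). Because $x,x' \in X(\xi)$ have dense orbits and $|\mathcal V_{i+1}| \geq 2$, each vertex $w \in \mathcal V_{i+1}$ is visited by the orbit in finite maximal blocks of $\dim w$ consecutive time indices, separated by visits to other vertices. The assumption that the $\mathcal V_{i+1}$-codings of $x$ and $x'$ coincide means these blocks occur at \emph{exactly the same} time indices for both paths. Within each such block $[m_0, m_0 + \dim w - 1]$, the successor map cycles through the $\dim w$ paths from the root to $w$ in lexicographic order, and by the previous paragraph both $T^{m_0} x$ and $T^{m_0} x'$ start that cycle at the minimal edge into $w$. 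Hence $(T^{m_0 + k} x)_i = (T^{m_0 + k} x')_i$ for each $0 \leq k < \dim w$, which is what was needed. Combined with equality of the first $i$ edges coming from the common $i$-coding, this yields $(T^m x)_{[0,i]} = (T^m x')_{[0,i]}$ for all $m$, so the $(i+1)$-codings agree and we reach the desired contradiction.

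The main obstacle is the case analysis in the structural step above: verifying carefully from the definition of the successor that a change of vertex at level $i+1$ always forces the new incoming edge at level $i$ to be minimal into its target. Once that is in hand, the alignment of runs and the contradiction are essentially bookkeeping, using only density of orbits (so that runs are finite in both time directions) and the assumption that $\mathcal V_{i+1}$ contains at least two vertices.
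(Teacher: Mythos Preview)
Your proof is correct, but it takes a different route from the paper's. The paper argues directly: by depth $i$ there is a time $j$ with $(T^jx)_i\neq(T^jx')_i$; if at that moment both sit at the same vertex $w$ at level $i+1$, then their $[0,i]$-segments are distinct paths to $w$, hence at different heights in the tower over $w$. Density together with $|\mathcal V_{i+1}|\geq 2$ forces each orbit eventually to leave $w$, and since the two heights differ they cannot reach the maximal path to $w$ simultaneously, so one leaves $w$ strictly before the other. That first exit time is the desired $m$.

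Your argument is the contrapositive: assuming the $\mathcal V_{i+1}$-codings coincide at every time, you show the full $(i+1)$-codings coincide. The structural step (a change of vertex at level $i+1$ forces the new incoming edge at level $i$ to be minimal, indeed the whole new $[0,i]$-segment to be the minimal path into the new vertex) is right, and it drives the synchronization of the two orbits through each run. One small imprecision: maximal runs of a fixed $w$ in the $\mathcal V_{i+1}$-coding need not have length exactly $\dim w$; they can have length any positive multiple of $\dim w$, since after exhausting the tower the successor of the edge at level $i+1$ may again have source $w$. This does not hurt your argument: at each such reset both paths return to the minimal path into $w$ (because for each of them the first non-maximal edge lies at level $\geq i+1$), so the equality $(T^{m}x)_{[0,i]}=(T^{m}x')_{[0,i]}$ propagates through the entire run.

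The paper's approach is shorter. Yours yields the slightly stronger statement that the coding by $\mathcal V_{i+1}$, together with the common $i$-coding, already determines the $(i+1)$-coding, which dovetails with the recognizability discussion in Remark~\ref{rem:recog}.
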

\begin{proof}
	Since the pair $x,x'$ is depth $i$, there is $j \in \Z$ such that $(T^jx)_i \neq (T^jx')_i$. 
		If it happens that $v_{i+1}(T^jx)=w=v_{i+1}(T^jx')$, one of $x,x'$ must reach 
  the maximal path to $w$ and change vertices at level $i+1$ before the other, since 
  there are at least two vertices at every level and both $x$ and $x'$ have dense 
  orbits. Thus there is a smallest $k \geq 0$ such that $v_{i+1}(T^{j+k}x) \neq 
  v_{i+1}(T^{j+k}x')$. Letting $m=k+j$, we have $v_{i+1}(T^mx) \neq v_{i+1}
  (T^mx')$.
\end{proof}

\section{Multivariable polynomial Bratteli diagrams}\label{sec:systems}
NOTE: In this section we use the notation $x_i$ for variables (the arguments in multivariable polynomials) and {\em not} for the edges of paths in the diagram.

 A homogeneous positive integer multivariable polynomial of degree $d$ in $q$ variables
 \[p(x)=p(x_1,\dots, x_q) =\sum_{m_1+m_2+\dots m_q=d}a_{(m_1,m_2,\dots,m_q)}x_1^{m_1}x_2^{m_2}\dots x_q^{m_q}\] 
 where $m_1,\dots m_q$ are non-negative integers and $a_{(m_1,m_2,\dots, m_q)}\in \N$,    
defines {a Bratteli diagram} in the following manner. 
	Each 
vertex {$w \in \mathcal V_n$} is a momomial $x_1^{m_1}\dots x_q^{m_q}$ in $(p(x))^n$, with $\sum_{i=1}^qm_i=nd$, {which we also identify} with the vector
 $(m_1,\dots, m_q)=\sum_{i=1}^q 
m_ie_i$, where $e_i$ is the $i$'th standard basis vector in 
$q$ space.  We write $w=(m_1,\dots,m_q)=(w(1),\dots,w(q))$. By $w\geq 0$ we mean $w(\ell)\geq 0$ for all  $\ell \in\{1, ...q\}$, and $w\geq w'$, $w-w'\geq 0$, {and $|w|=w_1+w_2+\dots +w_q$.} 

For each $n\in \N$, the 
number of vertices in $\mathcal{V}_n$ equals the number of terms in 
$(p(x))^n$, equivalently, the number of ways $nd$ can be 
written as a sum of $q$ nonnegative integers. In other words,
\[|\mathcal{V}_n|=\binom{nd+q-1}{q-1}.\]

\begin{figure}
\begin{tikzpicture}[scale=1]
\foreach \n in {1,2}{
	\fill(0,-\n) circle (2pt);
	\fill(0,-\n+1) circle (2pt);
	\draw(0,-\n)--(-2,-\n-1);
	\draw(0,-\n)--(-1,-\n-1);
	\draw(0,-\n)--(0,-\n-1);
	\draw(0,-\n)--(1,-\n-1);
	\draw(0,-\n)--(2,-\n-1);
	\draw  (0,-\n) to [out=270,in=30] (0-1,-\n-1);
	\draw(0,-\n+1)--(-2,-\n);
	\draw(0,-\n+1)--(-1,-\n);
	\draw(0,-\n+1)--(0,-\n);
	\draw(0,-\n+1)--(1,-\n);
	\draw(0,-\n+1)--(2,-\n);
	\draw  (0,-\n+1) to [out=270,in=30] (-1,-\n);
	\draw (0,-\n) to [out=-20,in=95] (1,-\n-1);
	\draw (0,-\n) to [out=-20,in=95] (1,-\n-1);
	\draw  (0,-\n) to [out=270,in=160] (1,-\n-1);
	\draw  (0,-\n) to [out=270,in=160] (1,-\n-1);
	\draw (0,-\n+1) to [out=-20,in=95] (1,-\n);
	\draw (0,-\n+1) to [out=-20,in=95] (1,-\n);
	\draw  (0,-\n+1) to [out=270,in=160] (1,-\n);
	\draw  (0,-\n+1) to [out=270,in=160] (1,-\n);
	\pgfmathsetmacro{\p}{2*\n}
	\foreach \k in {1,2,...,\p}{
		\fill(\k,-\n) circle (2pt);
		\fill(-\k,-\n) circle (2pt);
		\draw(\k,-\n)--(\k-2,-\n-1);
		\draw(-\k,-\n)--(-\k-2,-\n-1);
		\draw(-\k,-\n) to [out=270,in=30] (-\k-1,-\n-1);
		\draw (\k,-\n) to [out=270,in=30] (\k-1,-\n-1);
		\draw (-\k,-\n) to [out=-20,in=95] (-\k+1,-\n-1);
		\draw (\k,-\n) to [out=-20,in=95] (\k+1,-\n-1);
		\draw  (-\k,-\n) to [out=270,in=160] (-\k+1,-\n-1);
		\draw  (\k,-\n) to [out=270,in=160] (\k+1,-\n-1);
		\draw(\k,-\n)--(\k-1,-\n-1);
		\draw(-\k,-\n)--(-\k-1,-\n-1);
		\draw(\k,-\n)--(\k,-\n-1);
		\draw(-\k,-\n)--(-\k,-\n-1);
		\draw(\k,-\n)--(\k+1,-\n-1);
		\draw(-\k,-\n)--(-\k+1,-\n-1);
		\draw(\k,-\n)--(\k+2,-\n-1);
		\draw(-\k,-\n)--(-\k+2,-\n-1);
  }}
    \foreach \k in {-6,-5,...,6}{
    \fill(\k,-3) circle (2pt);}
    \node at (-6.3,-3.4){$(12,0)$};
    \node at (-5.2,-3.4){$(11,1)$};
    \node at (-4.1,-3.4){$(10,2)$};
    \node at (-3,-3.4){$(9,3)$};
    \node at (-2,-3.4){$(8,4)$};
    \node at (-1,-3.4){$(7,5)$};
    \node at (0,-3.4){$(6,6)$};
    \node at (1,-3.4){$(5,7)$};
    \node at (2,-3.4){$(4,8)$};
    \node at (3,-3.4){$(3,9)$};
    \node at (4.1,-3.4){$(2,10)$};
    \node at (5.2,-3.4){$(1,11)$};
    \node at (6.3,-3.4){$(0,12)$};
        
\end{tikzpicture}

    \caption{The polynomial adic system associated with $$p(x,y)=x^4+2x^3y+x^2y^2+3xy^3+y^4,$$ has $q=2$ variables and is of degree $d=4$. The vertices on level $3$ corresponding to terms in $(p(x,y))^3$ are labeled.}
    \label{fig:MVadic}
\end{figure}

\begin{definition}
    The set of \emph{source vectors} is $S=\{s=(s_1,s_2,\dots,s_q):s\geq 0 \text{ and } {|s|}=d\}$. 
\end{definition}
Thinking of $\mathcal{V}_1$ as a set of vectors, we have that $\mathcal{V}_1=S$, and hence each $s\in S$ also corresponds to a monomial in $p(x)$. {In this way we can also think about an edge, $s$, connecting vertices $w\in \mathcal{V}_n$ and $v\in \mathcal{V}_{n+1}$ as multiplication of the monomial from $(p(x))^n$ corresponding to $w$ by the monomial corresponding to $s$, resulting in the monomial in $(p(x))^{n+1}$ correspoding to $v$.}
  
 \begin{definition}\label{def:polyshape}
  Let $p(x)=p(x_1,\dots,x_q)$ be a homogeneous positive integer multivariable polynomial. Consider a Bratteli diagram $\mathcal B=(\mathcal{V},\mathcal{E})$ for which the set of vertices is defined by $p$ as above. Suppose that for each $n \geq 0$ the set $\mathcal E_n$ of edges connecting vertices in $\mathcal V_n$ to vertices in $\mathcal V_{n+1}$ has the property that there is at least one edge from $u \in \mathcal V_{n}$ to $w \in \mathcal V_{n+1}$ if and only if there is $s \in S$ such that $u+s=w$. 
We then say that the diagram is (or has) {\em polynomial shape} and {\em is associated to $p$.}
If in addition the number of edges between $u$ and $w$ is the coefficient of the monomial $x_1^{s_1}x_2^{s_2}\dots x_q^{s_q}$ in $p$ that corresponds to the vector $s=w-u$, we say that the diagram is {\em a polynomial diagram} and {\em is defined by $p$}.
\end{definition}

In a polynomial diagram, the number of paths from the root vertex to a vertex $u=(m_1,m_2,\dots,m_q)$ is the coefficient of the monomial $x_1^{m_1}x_2^{m_2}\dots 
	x_q^{m_q}$ in $(p(x_1,\dots,x_q))^{n}$. See Figure \ref{fig:MVadic}. 
	
	\begin{definition}\label{def:sourceset}
	{For $n \geq 1$ and $w \in \mathcal V_{n+1}$ we define the {\em source set} of $w$ to be}
		\be
	{S(w)=\{u \in \mathcal V_{n}: w-u \in S\}.}
		\en
		\end{definition}
		Thus $u \in S(w)$ if and only if there is an edge from $u$ to $w$.
		Since vector 
addition corresponds to multiplication of monomials, $u\in S(w)$ if and only if the monomial corresponding to $u$ can be multiplied by a {monomial} of 
$p(x)$ to obtain the monomial corresponding to $w$.

Every polynomial diagram has polynomial shape, but a polynomial shape diagram associated to a polynomial $p$, while having connections between the same pairs of vertices as in the polynomial diagram defined by $p$, can have any positive number of edges between connected vertices.
When $q=1$, a polynomial shape diagram is that of either an odometer or a finite set of points.

{\em For the rest of this work we will assume that $X$ refers to the path space of a polynomial shape diagram for which $q\geq 2$.}
 This class of diagrams satisfies all the conditions mentioned in Section \ref{sec:background}.
  Some polynomial shape diagrams that have previously been studied include the Euler and reverse Euler diagrams in any dimension as well as the Stirling diagrams. 
See Figure \ref{fig:Euler}.

\begin{figure}
\begin{tikzpicture}[scale=.4]
\draw(-1,-1)--+(-1,-1);
\foreach \k in {-1,0,1,2}{
\draw[rounded corners](\k,-2-\k)--+(.25,-.75)--+(1,-1);
\draw[rounded corners](\k,-2-\k)--+(.75,-.25)--+(1,-1);
\draw[rounded corners](-\k,-2-\k)--+(-.25,-.75)--+(-1,-1);
\draw[rounded corners](-\k,-2-\k)--+(-.75,-.25)--+(-1,-1);}
\foreach \k in {-2,-1,0}{
\draw(\k,-4-\k)--+(1,-1);
\draw[rounded corners](\k,-4-\k)--+(.25,-.75)--+(1,-1);
\draw[rounded corners](\k,-4-\k)--+(.75,-.25)--+(1,-1);
\draw(-\k,-4-\k)--+(-1,-1);
\draw[rounded corners](-\k,-4-\k)--+(-.25,-.75)--+(-1,-1);
\draw[rounded corners](-\k,-4-\k)--+(-.75,-.25)--+(-1,-1);
}
\foreach \k in {-3,-2}{
\draw[rounded corners](\k,-6-\k)--+(.25,-.75)--+(1,-1);
\draw[rounded corners](\k,-6-\k)--+(.75,-.25)--+(1,-1);
\draw[rounded corners](-\k,-6-\k)--+(-.25,-.75)--+(-1,-1);
\draw[rounded corners](-\k,-6-\k)--+(-.75,-.25)--+(-1,-1);
\draw[rounded corners](\k,-6-\k)--+(.38,-.62)--+(1,-1);
\draw[rounded corners](\k,-6-\k)--+(.62,-.38)--+(1,-1);
\draw[rounded corners](-\k,-6-\k)--+(-.38,-.62)--+(-1,-1);
\draw[rounded corners](-\k,-6-\k)--+(-.62,-.38)--+(-1,-1);
}
\foreach \k in {-4}{
\draw(\k,-8-\k)--+(1,-1);
\draw(-\k,-8-\k)--+(-1,-1);
\draw[rounded corners](\k,-8-\k)--+(.25,-.75)--+(1,-1);
\draw[rounded corners](\k,-8-\k)--+(.75,-.25)--+(1,-1);
\draw[rounded corners](-\k,-8-\k)--+(-.25,-.75)--+(-1,-1);
\draw[rounded corners](-\k,-8-\k)--+(-.75,-.25)--+(-1,-1);
\draw[rounded corners](\k,-8-\k)--+(.38,-.62)--+(1,-1);
\draw[rounded corners](\k,-8-\k)--+(.62,-.38)--+(1,-1);
\draw[rounded corners](-\k,-8-\k)--+(-.38,-.62)--+(-1,-1);
\draw[rounded corners](-\k,-8-\k)--+(-.62,-.38)--+(-1,-1);
}
\draw(0,0)--(-5,-5);
\draw(0,0)--(5,-5);
\fill (0,0) circle (3pt);

\foreach \k in {-1,1}{
\fill (\k,-1) circle (3pt);
}
\foreach \k in {-2,0,2}{
\fill (\k,-2) circle (3pt);
}
\foreach \k in {-3,-1,1,3}{
\fill (\k,-3) circle (3pt);
}
\foreach \k in {-4,-2,0,2,4}{
\fill (\k,-4) circle (3pt);
}
\foreach \k in {-5,-3,-1,1,3,5}{
\fill (\k,-5) circle (3pt);
}

\end{tikzpicture}\hskip .2in\begin{tikzpicture}[scale=.4]
\fill(0,0) circle (3pt);
\draw(0,0)--++(-1,-1);
\draw(0,0)--++(1,-1);
\foreach \k in {1,2,3,4}{
\draw(\k,-\k)--+(-1,-1);
\draw(-\k,-\k)--+(1,-1);
}
\foreach \k in {-1,0,1,2}{
\draw[rounded corners](\k,-2-\k)--+(-.25,-.75)--+(-1,-1);
\draw[rounded corners](\k,-2-\k)--+(-.75,-.25)--+(-1,-1);
\draw[rounded corners](-\k,-2-\k)--+(.25,-.75)--+(1,-1);
\draw[rounded corners](-\k,-2-\k)--+(.75,-.25)--+(1,-1);}
\foreach \k in {-2,-1,0}{
\draw(\k,-4-\k)--+(-1,-1);
\draw[rounded corners](\k,-4-\k)--+(-.25,-.75)--+(-1,-1);
\draw[rounded corners](\k,-4-\k)--+(-.75,-.25)--+(-1,-1);
\draw(-\k,-4-\k)--+(1,-1);
\draw[rounded corners](-\k,-4-\k)--+(.25,-.75)--+(1,-1);
\draw[rounded corners](-\k,-4-\k)--+(.75,-.25)--+(1,-1);
}
\foreach \k in {-3,-2}{
\draw[rounded corners](\k,-6-\k)--+(-.25,-.75)--+(-1,-1);
\draw[rounded corners](\k,-6-\k)--+(-.75,-.25)--+(-1,-1);
\draw[rounded corners](-\k,-6-\k)--+(.25,-.75)--+(1,-1);
\draw[rounded corners](-\k,-6-\k)--+(.75,-.25)--+(1,-1);
\draw[rounded corners](\k,-6-\k)--+(-.38,-.62)--+(-1,-1);
\draw[rounded corners](\k,-6-\k)--+(-.62,-.38)--+(-1,-1);
\draw[rounded corners](-\k,-6-\k)--+(.38,-.62)--+(1,-1);
\draw[rounded corners](-\k,-6-\k)--+(.62,-.38)--+(1,-1);
}
\foreach \k in {-4}{
\draw(\k,-8-\k)--+(-1,-1);
\draw(-\k,-8-\k)--+(1,-1);
\draw[rounded corners](\k,-8-\k)--+(-.25,-.75)--+(-1,-1);
\draw[rounded corners](\k,-8-\k)--+(-.75,-.25)--+(-1,-1);
\draw[rounded corners](-\k,-8-\k)--+(.25,-.75)--+(1,-1);
\draw[rounded corners](-\k,-8-\k)--+(.75,-.25)--+(1,-1);
\draw[rounded corners](\k,-8-\k)--+(-.38,-.62)--+(-1,-1);
\draw[rounded corners](\k,-8-\k)--+(-.62,-.38)--+(-1,-1);
\draw[rounded corners](-\k,-8-\k)--+(.38,-.62)--+(1,-1);
\draw[rounded corners](-\k,-8-\k)--+(.62,-.38)--+(1,-1);
}

\foreach \k in {-1,1}{
\fill (\k,-1) circle (3pt);
}
\foreach \k in {-2,0,2}{
\fill (\k,-2) circle (3pt);
}
\foreach \k in {-3,-1,1,3}{
\fill (\k,-3) circle (3pt);
}
\foreach \k in {-4,-2,0,2,4}{
\fill (\k,-4) circle (3pt);
}
\foreach \k in {-5,-3,-1,1,3,5}{
\fill (\k,-5) circle (3pt);
}

\end{tikzpicture}\hskip .2in \begin{tikzpicture}[scale=.4]
\fill(0,0) circle (3pt);
\draw(0,0)--++(-1,-1);
\draw(0,0)--++(1,-1);
\foreach \k in {-1,1}{
\draw[rounded corners](\k,-1)--+(-.25,-.75)--+(-1,-1);
\draw[rounded corners](\k,-1)--+(-.75,-.25)--+(-1,-1);
\draw(\k,-1)--+(1,-1);
}
\foreach \k in {-2,0,2}{
\draw[rounded corners](\k,-2)--+(-.25,-.75)--+(-1,-1);
\draw[rounded corners](\k,-2)--+(-.75,-.25)--+(-1,-1);
\draw(\k,-2)--+(-1,-1);
\draw(\k,-2)--+(1,-1);
}
\foreach \k in {-3,-1,1,3}{
\draw[rounded corners](\k,-3)--+(-.25,-.75)--+(-1,-1);
\draw[rounded corners](\k,-3)--+(-.75,-.25)--+(-1,-1);
\draw[rounded corners](\k,-3)--+(-.38,-.62)--+(-1,-1);
\draw[rounded corners](\k,-3)--+(-.62,-.38)--+(-1,-1);
\draw(\k,-3)--+(1,-1);
}
\foreach \k in {-4,-2,0,2,4}{
\draw[rounded corners](\k,-4)--+(-.25,-.75)--+(-1,-1);
\draw[rounded corners](\k,-4)--+(-.75,-.25)--+(-1,-1);
\draw[rounded corners](\k,-4)--+(-.38,-.62)--+(-1,-1);
\draw[rounded corners](\k,-4)--+(-.62,-.38)--+(-1,-1);
\draw(\k,-4)--+(-1,-1);
\draw(\k,-4)--+(1,-1);
}
\foreach \k in {-1,1}{
\fill (\k,-1) circle (3pt);
}
\foreach \k in {-2,0,2}{
\fill (\k,-2) circle (3pt);
}
\foreach \k in {-3,-1,1,3}{
\fill (\k,-3) circle (3pt);
}
\foreach \k in {-4,-2,0,2,4}{
\fill (\k,-4) circle (3pt);
}
\foreach \k in {-5,-3,-1,1,3,5}{
\fill (\k,-5) circle (3pt);
}
\end{tikzpicture}
    \caption{The Euler, Reverse Euler, and Stirling Bratteli diagrams.}
    \label{fig:Euler}
\end{figure}
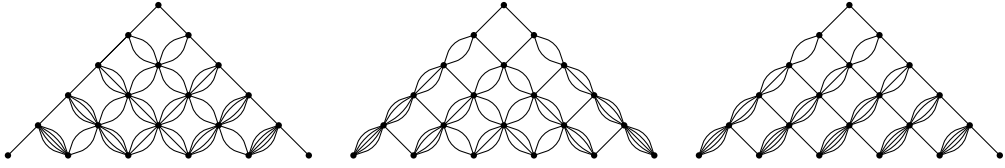

    \begin{remark}\label{rem:MaxCoordinate} 
    If $u\in S(w)$, then for all $j\in \{1,2,\dots, q\}$, 
    $u(j)\leq w(j)\leq u(j)+d$. This implies that if $w,w'\in 
    \mathcal{V}_{n+1}$ and $S(w)\cap S(w')\neq \emptyset$, then for each 
    $j\in \{1,\dots q\}$, $|w(j)-w'(j)|\leq d$. Likewise, if 
    $u,u'\in S(w)$, then for each $j\in \{1,\dots q\}$, $|u(j)-
    u'(j)|\leq d$. Further, the coordinates of each vertex in 
    $\mathcal{V}_{n+1}$ must sum to 
    $(n+1)d$, and since there are exactly $q$ coordinates, at least 
    one coordinate is at least $(n+1)d/q$. 
    \end{remark}  
    The following proposition gives a complete description of which paths in a polynomial shape diagram have dense orbits. The proof is straightforward and is left to the reader.
    \begin{proposition}\label{prop:orbits}
    	In a polynomial shape diagram with path space $X$, a path $x \in X$ has dense orbit if and only if 
    		\be 
    	\lim_{n \to \infty} \mathrm{min} \{v_n(x)(j): j=1, \dots, q\} = \infty. 
    		\en 
    			\end{proposition}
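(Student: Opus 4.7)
The plan is to exploit two structural features of polynomial shape diagrams. First, along any downward path each coordinate is non-decreasing: if $v_{n+1}(x) = v_n(x) + s$ with $s \in S$, then $s(j) \geq 0$ for each $j$, so for each $j$ the sequence $v_n(x)(j)$ is non-decreasing in $n$ and hence tends to a limit in $\N \cup \{\infty\}$. Second, I would show that in a polynomial shape diagram a vertex $v \in \mathcal V_k$ connects downward to a vertex $w \in \mathcal V_n$ (with $n \geq k$) if and only if $w \geq v$ componentwise. The ``only if'' direction is immediate from the first observation, and the ``if'' direction is a greedy decomposition: whenever $w - v$ is a nonnegative integer vector with $|w-v| \geq d$, it dominates some $s \in S$ (just allocate $d$ units greedily among its positive coordinates), so one can iteratively peel off $n-k$ source vectors.

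The next step is to translate density of the orbit into a vertex-visitation statement. Since each $T^m x$ is tail equivalent to $x$, the orbit of $x$ passes through a vertex $v \in \mathcal V_k$ if and only if $v$ connects downward to some vertex $v_n(x)$ on the path $x$ with $n \geq k$; by the characterization above, this happens precisely when $v \leq v_n(x)$ componentwise for some $n \geq k$. The orbit of $x$ is dense exactly when every cylinder $[\alpha]$ is visited, and this is in turn equivalent to the orbit meeting every vertex at every level.

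Now I would prove the two implications directly. For the ``if'' direction, assume $\min_j v_n(x)(j) \to \infty$. Given any $v \in \mathcal V_k$, each coordinate satisfies $v(j) \leq kd$. Choose $n \geq k$ with $v_n(x)(j) \geq kd \geq v(j)$ for all $j$; then $v$ connects down to $v_n(x)$, so the orbit visits $v$, and density follows. For the converse, if the minimum does not tend to infinity, then by monotonicity there exist $j$ and $M$ with $v_n(x)(j) \leq M$ for all $n$. For any $k$ with $kd > M$, the vertex $v \in \mathcal V_k$ defined by $v(j) = kd$ and $v(\ell) = 0$ for $\ell \neq j$ (which lies in $\mathcal V_k$ because its coordinates are nonnegative and sum to $kd$) satisfies $v \not\leq v_n(x)$ for every $n \geq k$, so $v$ is never visited and the orbit is not dense.

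The only nontrivial ingredient is the characterization of downward connectivity, and that is essentially a combinatorial exercise using the full polynomial shape hypothesis (that \emph{all} edges allowed by $S$ are present). Once that is in place, both directions reduce to comparing the coordinatewise growth of $v_n(x)$ against the uniform bound $kd$ on coordinates of vertices at level $k$.
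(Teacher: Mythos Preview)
Your argument is correct. The paper itself does not supply a proof of this proposition (it states that ``the proof is straightforward and is left to the reader''), so there is no approach in the paper to compare yours against; your write-up is exactly the kind of argument the authors have in mind, using monotonicity of the coordinates along a path, the componentwise characterization of downward connectivity (which follows from the full source set $S$ in Definition~\ref{def:polyshape}), and the equivalence between density of the tail-orbit and visitation of every vertex.
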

      
{
\begin{proposition}\label{prop:measures}
	For polynomial shape diagrams, for every ordering the sets 
	of maximal and minimal paths have measure zero for every fully 
	supported ergodic invariant (Borel probability) measure.
\end{proposition}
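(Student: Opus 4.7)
The plan is to show that the $\mu$-measure of the set of paths whose first $N$ edges are all minimal tends to zero as $N \to \infty$; this gives $\mu(X_{\min}) = 0$, and the argument for $X_{\max}$ is identical. The orbits of these sets under $T$ then have measure zero as well, since each single-path orbit is countable and $T$-invariance of $\mu$ yields
\begin{equation*}
\mu\Bigl(\bigcup_{n \in \mathbb{Z}} T^n X_{\min}\Bigr) \leq \sum_{n \in \mathbb{Z}} \mu(T^n X_{\min}) = 0,
\end{equation*}
and likewise for $X_{\max}$.

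For a tail-invariant measure $\mu$, let $\nu(v)$ denote the common value of $\mu([q])$ for any finite path $q$ from the root to $v$. Since every $v \in \mathcal{V}_N$ is reached by a unique minimal path from the root,
\begin{equation*}
\mu\bigl(\{x \in X : x_n \text{ is minimal for } 0 \leq n < N\}\bigr) = \sum_{v \in \mathcal{V}_N} \nu(v),
\end{equation*}
and $\mu(X_{\min})$ is the decreasing limit of these sums. My strategy is to split the sum at a threshold $M$ on $\min_i v(i)$. For ``boundary'' vertices with $\min_i v(i) \leq M$, I bound the partial sum by $\mu(\{x : \min_i v_N(x)(i) \leq M\})$; by Proposition~\ref{prop:orbits}, full support and ergodicity of $\mu$ force $\mu$-a.e.\ path to have dense orbit, and hence $\min_i v_N(x)(i) \to \infty$ $\mu$-a.e., so for each fixed $M$ this quantity tends to $0$ as $N \to \infty$.

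For ``interior'' vertices with $\min_i v(i) > M$, I use the identity $\mu(\{x : v_N(x) = v\}) = \dim(v)\,\nu(v)$ to obtain
\begin{equation*}
\sum_{v \in \mathcal{V}_N,\, \min_i v(i) > M} \nu(v) \leq \frac{1}{D_M}, \qquad D_M := \inf\{\dim(v) : v \in \mathcal{V}_n,\ n \geq 1,\ \min_i v(i) > M\}.
\end{equation*}
The main obstacle is the combinatorial lemma that $D_M \to \infty$ as $M \to \infty$; this is where the polynomial-shape hypothesis is essential, because for odometer-type corners the dimension alone need not grow. Since at least one edge exists for every source vector $s \in S$, any $v \in \mathcal{V}_N$ with $\min_i v(i) \geq M$ admits many paths from the root, obtained by reserving $\lfloor M/d \rfloor$ edges of each pure type $d e_i$ (feasible because $v(i) \geq M$) and permuting them among the $N$ positions. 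This yields
\begin{equation*}
\dim(v) \geq \binom{q\lfloor M/d \rfloor}{\lfloor M/d \rfloor,\dots,\lfloor M/d \rfloor},
\end{equation*}
which grows without bound in $M$ (uniformly in $N$).

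Taking $N \to \infty$ first to kill the boundary contribution and then $M \to \infty$ to kill the interior contribution gives $\mu(X_{\min}) = 0$, completing the proof. As noted in Remark~\ref{rem:measures}, in settings where the fully supported ergodic measures are known explicitly to be of Kerov product form $\nu(v) = \prod_i p_i^{v(i)}/p(\mathbf{p})^N$, one can alternatively obtain exponential decay of $\sum_{v \in \mathcal{V}_N} \nu(v)$ directly from the strict inequality $p(\mathbf{p}) > (\max_i p_i)^d$, which holds for any fully supported probability vector $\mathbf{p}$ since $p$ contributes at least one monomial not concentrated on the maximal coordinate.
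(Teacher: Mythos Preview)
Your proof is correct, but it follows a genuinely different decomposition than the paper's. Both arguments rest on the identity $\mu([v])=\dim(v)\,\nu(v)$ and bound $\sum_{v\in\mathcal V_N}\nu(v)$ by controlling $\dim(v)$ away from an exceptional set of vertices; the difference lies in how that exceptional set is chosen and how the dimension is bounded on its complement.

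The paper excises only the $q$ ``corner'' paths $x^{(i)}$ with $v_n(x^{(i)})=nd\,e_i$ (each a single path, hence $\mu$-null by ergodicity and full support), and then proves by a short induction that every \emph{other} vertex $v\in\mathcal V_n$ satisfies $\dim v\ge n$. This yields the clean one-shot estimate
\[
\sum_{v\in\mathcal V_{n}'}\nu(v)\le\frac{1}{n}\sum_{v\in\mathcal V_{n}'}\mu([v])\le\frac{1}{n}.
\]
Your argument instead excises the larger ``boundary'' region $\{\min_i v(i)\le M\}$, disposes of it via Proposition~\ref{prop:orbits} together with the fact that $\mu$-a.e.\ path has dense orbit, and on the interior uses the explicit multinomial lower bound $\dim(v)\ge\binom{q\lfloor M/d\rfloor}{\lfloor M/d\rfloor,\dots,\lfloor M/d\rfloor}$. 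This costs you a double limit ($N\to\infty$ then $M\to\infty$) and an appeal to Proposition~\ref{prop:orbits}, whereas the paper's inductive bound $\dim v\ge n$ is self-contained and gives a single limit. On the other hand, your multinomial bound is completely explicit and does not rely on the inductive step; it would also transfer more readily to variants where the corner vertices are not literally one-dimensional.
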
}
\begin{proof}
	{As mentioned above in Remark \ref{rem:measures}, we can adapt an argument from \cite{Berthe2017}.} {In \cite{Berthe2017} a diagram was defined to be {\em everywhere growing} if $\min\{\dim v: v \in \mathcal V_n\} \to \infty$ as $n \to \infty$.
	Polynomial shape diagrams need not have this property, but {we will show that} they {almost do, 
		in the sense that}
	for each such diagram 
	there is a finite (possibly empty) set of paths $x^{(i)}, i=1, \dots,q,$ for which $\dim v_n(x^{(i)})$ is bounded {from above}, {while $\min\{\dim v: v \in \mathcal V_n \setminus \cup_{1 \leq i \leq q} v_n(x^{(i)})\} \to \infty$ as $n \to \infty$.}}
	
	Let $\xi$ be an ordering of a polynomial shape diagram $\mathcal B$ that is associated to a polynomial $p$, 
		and let $\mu$ be a fully supported ergodic measure on {the path space} $X$. Since larger coefficients lead to larger dimensions for the vertices,
		 we may assume that all coefficients of $p$ are equal to $1$, so that 
	\be 
	p(x_1, \dots, x_q) = \sum_{n_1+\dots+n_q=d} x_1^{n_1} \cdots x_q^{n_q}.
	\en 
	For each $i=1, \dots, q$ consider the path $x^{(i)}$ such that $v_n(x^{(i)}) = x_i^{nd} \sim nd e_i$ for all $n \geq 1$.
	Then $\dim v_n(x^{(i)})=1$ for each $i$ and $n$. 
	Since $\mu$ is a fully supported ergodic measure on $X$, $\mu\{x^{(i)}\}=0$ for each $i=1, \dots, q$.
	So to show that for any ordering the sets of minimal and maximal paths have measure $0$, we will focus on paths not in this finite set.
	
	As just mentioned above, the dimension of vertex $v=(m_1, \dots, m_q)$ (with $\sum_i m_i=nd)$ at level $n$ is the coefficient of $x_1^{m_1} \cdots x_q^{m_q}$ in $p(x_1, \dots, x_q)^n$. 
	If $v=(m_1, \dots, m_q)$ is not one of the vertices $x_i^{nd}$ (that is, at least two of the $m_i$ are not equal to $0$), then $\dim v \geq n$.

 (This may be seen as follows. The statement is obviously true for $n=1$. 
Any vertex $v=x_1^{m_1} \cdots x_q^{m_q}$ at level $n>1$ is not one of the special $x_i^{nd}$ if and only if it has at least two of the exponents $m_i$ positive; equivalently, if and only if it has at least two different source vertices. 
%(This can be proved as follows. Suppose that $m_1,m_2>0$ one of the $m_i$, say $m_1$, is larger than $d$. 
%	Then we may let $u_1=(m_1-d, m_2, \dots, m_q)$ and $u_2= (m_1-d+1, m_2{-}1, m_3, \dots, m_q)$ to obtain two distinct vertices in $S(v)$.
%	If, on the other hand, all {$m_i\leq d$, we can let $u_1=(0,m'_2, \dots, m'_q)$ for any choice of the $m'_i$ {such that $0\leq m'_i\leq m_i$, with sum $(n-1)d$}, and 
%	$u_2=(m_1,m''_2, \dots,m''_q)$ with any choice of the $m''_i$ such that $0\leq m''_i\leq m_i$ with sum $(n-1)d-m_1$).}
Thus the statement is also true for $n=2$.
				Assume now that $n>2$, that $\dim w \geq n-1$ for all $w \in \mathcal V_{n-1}$ {for which $w$ is not one of the special $x_i^{nd}$}, and that $v \in \mathcal V_n$
	has at least two source vertices, $v_{n-1}$ and $v'_{n-1}$.
	Then at least one of these, call it $v_{n-1}$, again has two source vertices, since otherwise both would equal some $x_i^{(n-1)d}$, {and this cannot occur for $n>2$.} 
Since $\dim v \geq \dim v_{n-1} + \dim v'_{n-1}$, and $\dim v_{n-1} \geq n-1$, we have $\dim v \geq n$.)

	Let $v \in \mathcal V_{n+1}$ be a vertex at level $n +1$.
	% that is not one of the ${(n+1)d}e_i$.} 
	%\textcolor{magenta}{Denote the set of such vertices as $\mathcal V_{n+1}'$.}
Each path $\alpha = \alpha_0 \dots \alpha_n$ from the root to $v$ defines {a} cylinder set $[\alpha]=\{x \in X: x_{[0,n]}=\alpha\}$, and all these cylinder sets have equal measure
	\be 
	\mu[\alpha] = \frac{\mu [v]}{\dim v},
	\en 
	where $[v]=\{x \in X: v_{n+1}(x)=v\}$.
	Denote by 
	{$\mathcal V_{n+1}'$ the set of vertices in $\mathcal V_{n+1}$
		 that are not any of the $(n+1)de_i$ and by } 
	$M_n$ the set of finite minimal paths with terminal vertex in $\mathcal V_{n+1}'$.
	If $x$ is a minimal path that is not one of the $x^{(i)}$, then $x_{[0,n]} \in M_n$ for all large enough $n$. 
    {For each $v \in \mathcal V_{n+1}$} there is exactly one finite minimal path from the root to $v$, so, as in \cite[Lemma 6.2]{Berthe2017},
	\be 
	\mu \bigcup \{ [\alpha]: \alpha \in M_n \} \leq \sum_{v \in \mathcal V_{n+1}'} \frac{\mu [v]}{\dim v}
	\leq \frac{1}{n+1}\sum_{v \in \mathcal V_{n+1}'} \mu[v] \leq \frac{1}{n+1} \to 0 \text{ as } n \to \infty.
	\en 
	Similarly, the set of maximal paths has measure $0$.
  \end{proof}

\begin{definition}\label{def:dsv}
    For each $j\in \{1,\dots,q\}$ let $e_j$ denote the standard $j'th$ basis vector in $q$-space. Given 
    $w\in \mathcal{V}_{n+1}$ the 
    \emph{distinguished source vertex of $w$ in direction $j$}, 
    denoted $DSV(w,j)$ is the vertex $w-de_j\in \mathcal{V}_n$. 
\end{definition}

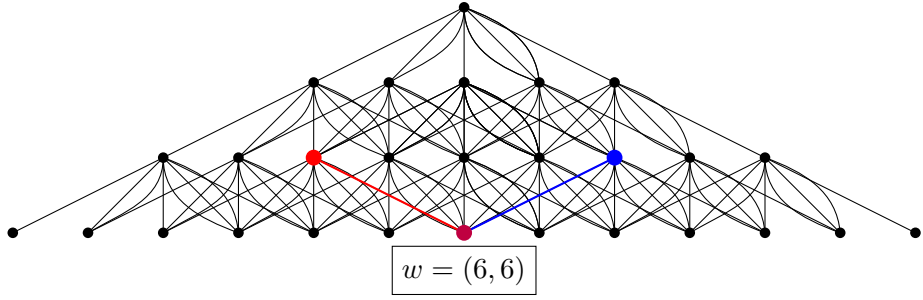
\begin{figure}
\begin{tikzpicture}[scale=1]
\foreach \n in {1,2}{
	\fill(0,-\n) circle (2pt);
	\fill(0,-\n+1) circle (2pt);
	\draw(0,-\n)--(-2,-\n-1);
	\draw(0,-\n)--(-1,-\n-1);
	\draw(0,-\n)--(0,-\n-1);
	\draw(0,-\n)--(1,-\n-1);
	\draw(0,-\n)--(2,-\n-1);
	\draw  (0,-\n) to [out=270,in=30] (0-1,-\n-1);
	\draw(0,-\n+1)--(-2,-\n);
	\draw(0,-\n+1)--(-1,-\n);
	\draw(0,-\n+1)--(0,-\n);
	\draw(0,-\n+1)--(1,-\n);
	\draw(0,-\n+1)--(2,-\n);
	\draw  (0,-\n+1) to [out=270,in=30] (-1,-\n);
	\draw (0,-\n) to [out=-20,in=95] (1,-\n-1);
	\draw (0,-\n) to [out=-20,in=95] (1,-\n-1);
	\draw  (0,-\n) to [out=270,in=160] (1,-\n-1);
	\draw  (0,-\n) to [out=270,in=160] (1,-\n-1);
	\draw (0,-\n+1) to [out=-20,in=95] (1,-\n);
	\draw (0,-\n+1) to [out=-20,in=95] (1,-\n);
	\draw  (0,-\n+1) to [out=270,in=160] (1,-\n);
	\draw  (0,-\n+1) to [out=270,in=160] (1,-\n);
	\pgfmathsetmacro{\p}{2*\n}
	\foreach \k in {1,2,...,\p}{
		\fill(\k,-\n) circle (2pt);
		\fill(-\k,-\n) circle (2pt);
		\draw(\k,-\n)--(\k-2,-\n-1);
		\draw(-\k,-\n)--(-\k-2,-\n-1);
		\draw(-\k,-\n) to [out=270,in=30] (-\k-1,-\n-1);
		\draw (\k,-\n) to [out=270,in=30] (\k-1,-\n-1);
		\draw (-\k,-\n) to [out=-20,in=95] (-\k+1,-\n-1);
		\draw (\k,-\n) to [out=-20,in=95] (\k+1,-\n-1);
		\draw  (-\k,-\n) to [out=270,in=160] (-\k+1,-\n-1);
		\draw  (\k,-\n) to [out=270,in=160] (\k+1,-\n-1);
		\draw(\k,-\n)--(\k-1,-\n-1);
		\draw(-\k,-\n)--(-\k-1,-\n-1);
		\draw(\k,-\n)--(\k,-\n-1);
		\draw(-\k,-\n)--(-\k,-\n-1);
		\draw(\k,-\n)--(\k+1,-\n-1);
		\draw(-\k,-\n)--(-\k+1,-\n-1);
		\draw(\k,-\n)--(\k+2,-\n-1);
		\draw(-\k,-\n)--(-\k+2,-\n-1);
  }}
  \foreach \k in {-6,-5,...,6}{\fill(\k,-3) circle (2pt);}
  \draw[red,thick](-2,-2)--(0,-3);
    \draw[blue,thick](2,-2)--(0,-3);
  \fill[purple](0,-3) circle (3pt);
  \fill[red](-2,-2) circle (3pt);
  \fill[blue](2,-2) circle (3pt);
  \node[fill=white] at (0,-3.5)[rectangle,draw]{$w=(6,6)$};
\end{tikzpicture}
    \caption{$DSV(w,1)=(2,6)$ is shown in blue, and $DSV(w,2)=
    (6,2)$ is shown in red. Another vertex with $DSV(w,1)$ in 
    its source set has first coordinate less than $w(1)=6$. 
    Likewise, a vertex with $DSV(w,2)$ in its source set has 
    second coordinate less than $w(2)=6$.}
    \label{fig:DSVCoordinate}
\end{figure}

\begin{remark}\label{rem:CondForDSV}
A vertex $w\in \mathcal{V}_{n+1}$ has a distinguished source vertex in 
direction $j$ if and only if $w(j)\geq d$. The $DSV(w,j)$ is 
the vertex in the source set of $w$ 
obtained by division of the monomial $x_1^{w(1)}\dots x_q^{w(q)}$ by 
$x_j^d$, reducing the exponent of $x_j$ by the maximal 
possible amount while the other exponents are unchanged. See 
Figure \ref{fig:DSVCoordinate}.\end{remark}

\begin{lemma}\label{lem:BoundedMovement}
    Let $w_0,w_1\in \mathcal{V}_n$ with $w_0\neq w_1$. If $DSV(w_0,j)\in S(w_1)$, then  $$1\leq w_0(j)-w_1(j).$$ 
\end{lemma}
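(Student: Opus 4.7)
The plan is to unpack the hypothesis $DSV(w_0,j) \in S(w_1)$ as a statement about source vectors. By Definition \ref{def:dsv}, $DSV(w_0,j) = w_0 - de_j$, and membership in $S(w_1)$ (Definition \ref{def:sourceset}) means $s := w_1 - (w_0 - de_j) = w_1 - w_0 + de_j$ lies in the source vector set $S$. So I would write down the two defining properties of $S$: (i) $s(k) \geq 0$ for every coordinate $k \in \{1,\dots,q\}$, and (ii) $|s| = s(1) + \dots + s(q) = d$.

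Next, I would extract what these say coordinate-by-coordinate. For $k \neq j$, (i) gives $w_1(k) \geq w_0(k)$, and for $k = j$ it gives $w_1(j) \geq w_0(j) - d$. Condition (ii) is automatic, since both $w_0$ and $w_1$ sit at level $n$ and thus have coordinate sums equal to $nd$. The key observation is that (i) and (ii) together force $s(j) \leq d$, with equality only when $s(k) = 0$ for every $k \neq j$.

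The remaining step is to rule out $s(j) = d$ using the assumption $w_0 \neq w_1$. If $s(j) = d$, then $s(k) = 0$ for all $k \neq j$ forces $w_1(k) = w_0(k)$ for $k \neq j$, and $s(j) = d$ forces $w_1(j) = w_0(j)$, so $w_1 = w_0$, contradicting the hypothesis. Hence $s(j) \leq d-1$, which, after substituting $s(j) = w_1(j) - w_0(j) + d$ and rearranging, gives exactly $w_0(j) - w_1(j) \geq 1$.

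There is no real obstacle here; the argument is a direct computation from the definitions of $DSV$, $S(w)$, and $S$. The only subtlety worth flagging explicitly in the write-up is that the strict inequality $s(j) < d$ is exactly where the hypothesis $w_0 \neq w_1$ is used, since without it the lemma would fail (one could take $w_0 = w_1$ and $s = de_j$).
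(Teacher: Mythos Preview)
Your argument is correct and is essentially identical to the paper's own proof: both write $s = w_1 - w_0 + de_j \in S$, use $|s|=d$ together with $s \geq 0$ to get $s(j) \leq d$, and then use $w_0 \neq w_1$ to rule out $s(j)=d$, yielding $w_0(j)-w_1(j) \geq 1$.
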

\begin{proof}
    Assume $DSV(w_0,j)\in S(w_1)$. Then there exists ${s}\in S$ such that $DSV(w_0,j)+{s}={w_1}$. This gives
    \begin{align*}
        {w_0}-d{e_j}+{s}={w_1},\\
        \text{so that } {s}={w_1}-{w_0}+d{e_j}.
    \end{align*}

    \begin{figure}
        \centering
        \begin{tikzpicture}
            \fill (0,0) circle (2pt);
            \node[below] at(0,0){$w_0$};
            \draw(0,0)--(2,2)--(2,0);
            \fill(2,0) circle (2pt);
            \node[below]at (2,0){$w_1$};
            \node[fill=white] at (2,2)[rectangle,draw]{$DSV(w_0,j)$};
            \node[fill=white] at (1,1){$de_j$};
            \node[fill=white] at (2,1){$s$};
        \end{tikzpicture}
        \caption{The $DSV(w_0,j)$ is in the source set of $w_1$ and hence $s(j)<d$, which implies $w_1(j)<w_0(j)$.}
        \label{fig:BoundedMovement}
    \end{figure}
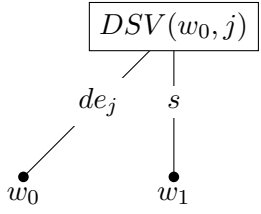
    Since the coordinates of $s$ are nonnegative and sum to $d$, $0\leq {s}(j)= {w_1}(j)-{w_0}(j)+d\leq d$, implying that 
    \[0\leq {w_0}(j)-{w_1}(j).\]
If ${w_0}(j)-{w_1}(j)=0$, this implies that 
${s}=d{e_j}$. 
However, then ${w_1}=DSV(w_0,j)+d{e_j}={w_0}.$ Since we 
assumed that ${w_0}\neq {w_1}$, we must have $w_0(j)-w_1(j) \neq 0$.
 Hence, $$1\leq w_0(j)-
w_1(j)$$ See Figure \ref{fig:BoundedMovement}.
\end{proof}

\begin{lemma}\label{lem:2DSV}
    Let $w_0,w_1\in \mathcal{V}_n$. If $DSV(w_0,j_1)\in S(w_1)$ and $DSV(w_0,j_2)\in S(w_1)$, where $j_1\neq j_2$, then $w_1=w_0$. 
\end{lemma}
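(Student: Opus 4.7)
The plan is to translate both hypotheses into vector equations using Definitions \ref{def:sourceset} and \ref{def:dsv}, and then exploit the fact that every element of $S$ is a nonnegative vector summing to $d$.

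First I would unpack the hypothesis. Since $DSV(w_0,j_\ell)\in S(w_1)$ for $\ell=1,2$, there exist vectors $s_1,s_2\in S$ with
\begin{equation*}
w_0-de_{j_1}+s_1=w_1 \quad\text{and}\quad w_0-de_{j_2}+s_2=w_1,
\end{equation*}
equivalently $s_\ell=w_1-w_0+de_{j_\ell}$ for $\ell=1,2$.

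Subtracting these two expressions yields $s_1-s_2=d(e_{j_1}-e_{j_2})$, and because $j_1\neq j_2$, reading this off coordinatewise gives $s_1(j_1)=s_2(j_1)+d\geq d$. But $s_1\in S$ has nonnegative coordinates summing to $d$, so the inequality $s_1(j_1)\geq d$ forces $s_1(j_1)=d$ and $s_1(k)=0$ for every $k\neq j_1$; in other words, $s_1=de_{j_1}$. Plugging back into $w_0-de_{j_1}+s_1=w_1$ collapses to $w_1=w_0$.

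I do not anticipate any real obstacle: once the two source-set relations are written out, the constraint $s_1,s_2\in S$ pins $s_1$ down uniquely as $de_{j_1}$, after which the conclusion is automatic. As a sanity check, one can give an alternative argument by contradiction that uses Lemma \ref{lem:BoundedMovement} directly: assuming $w_0\neq w_1$, the hypothesis $DSV(w_0,j_1)\in S(w_1)$ gives $w_1(j_1)\leq w_0(j_1)-1$, while the hypothesis $DSV(w_0,j_2)\in S(w_1)$ together with $s_2\geq 0$ read in coordinate $j_1\neq j_2$ forces $w_1(j_1)\geq w_0(j_1)$, a contradiction. Either phrasing requires only the definitions already in place.
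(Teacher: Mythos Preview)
Your proposal is correct. Your main argument is a slightly different (and arguably cleaner) route than the paper's: you subtract the two source-set equations directly to force $s_1=de_{j_1}$ from the constraint $|s_1|=d$, bypassing any appeal to earlier results. The paper instead argues via Remark~\ref{rem:MaxCoordinate} and Lemma~\ref{lem:BoundedMovement}: from $DSV(w_0,j_2)\in S(w_1)$ and $j_1\neq j_2$ it extracts $w_0(j_1)=DSV(w_0,j_2)(j_1)\leq w_1(j_1)$, which combined with $DSV(w_0,j_1)\in S(w_1)$ contradicts Lemma~\ref{lem:BoundedMovement} unless $w_0=w_1$. That is exactly the alternative you sketch as a sanity check, so you have in fact reproduced the paper's proof as your second option. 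Your primary approach has the small advantage of being self-contained from the definitions, while the paper's version makes the dependence on the preceding lemma explicit.
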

\begin{proof}
     Recall from the definition of $DSV(w,j)$ that  \[    w(i)=\begin{cases}
         DSV(w,j)(i) & i\neq j\\
         d+DSV(w,j)(i) & i=j. 
     \end{cases} \]
     Therefore, $w_0(j_1)=DSV(w_0,j_2)(j_1)$. Since $DSV(w_0,j_2)\in S(w_1)$, by Remark \ref{rem:MaxCoordinate}, $DSV(w_0,j_2)(j_1)\leq w_1(j_1)$. So we have $DSV(w_0,j_1)\in S(w_1)$ and $w_0(j_1)\leq w_1(j_1).$ Then by Lemma \ref{lem:BoundedMovement}, it must be the case that $w_1=w_0$. 
\end{proof}

\section{Covered and Uncovered Vertices}\label{sec:covered}
We are trying to prove that for polynomial shape diagrams, for every ordering and all 
large enough $i$ there are no depth $i$ pairs (outside of a negligible 
exceptional set), that is, no paths $x,x'$ that have the same $i$ coding but different $i+1$ codings. 
If $x$ and $x'$ were a depth $i$ pair {then for some $m \in \Z$, $w=v_{i+1}(T^mx) \neq v_{i+1}
(T^mx')=w'$, and the source sets} $S(w)$ and $S(w')$ intersect. 
Thus we need to keep track of a key aspect of the interconnections among vertices in a diagram, namely which vertices at a given level have their source sets contained in those of other vertices at that level. This will allow precise tracking of vertices that the depth $i$ pair passes through.

\begin{definition}\label{def:covered}
{We say that a vertex $w$ is {\em covered} if there is a 
vertex $w' \neq w$ (at the same level as $w$) such that $S(w) 
\subseteq S(w')$, in which case we say that $w'$ {\em covers} $w$. 
Otherwise $w$ is said to be {\em uncovered}.} 
\end{definition}
Lemma \ref{lem:2DSV} implies that if a vector $w$ has at least two 
distinct coordinates, $j_1, j_2$, for which $DSV(w,j_1)$ and 
$DSV(w,j_2)$ are defined, then $w$ must be uncovered. While 
this is sufficient, it is not necessary. For example for 
$q\geq 3$, the vertex on level $2$ given by $w=(d,d-
1,1,0,\dots,0)$ is uncovered (see Proposition \ref{prop:covered}), but the only distinguished source
vertex for $w$ that exists is $DSV(w,1)$ 
{(we need to subtract exactly $d$ from a single entry of $w$ to get a $DSV$)}. The following 
discussion gives necessary and sufficient conditions for a 
vertex to be covered and in the case that it is covered, explicitly describes the 
vertices by which it is covered.

\begin{proposition}\label{prop:covered}
A vertex $w$ at a level {$n>q \geq 2$} is covered if and only if  there is a $j \in \{1, \dots, q\}$ such that $w(j)> (n-1)d$.
\end{proposition}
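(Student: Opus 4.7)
The plan is to prove the two implications directly, using only the characterization $u \in S(w) \iff w - u \geq 0$ and $|w-u| = d$, together with the identity $|u| = (n-1)d$ for $u \in \mathcal V_{n-1}$.

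For the covered direction, suppose $w(j) > (n-1)d$ for some $j$. I would pick any index $k \neq j$ (available since $q \geq 2$) and set $w' = w - e_j + e_k$; this is a legitimate vertex of $\mathcal V_n$ distinct from $w$. The key observation is that every $u \in S(w)$ satisfies $u(j) \leq |u| = (n-1)d < w(j)$, so the source vector $s := w - u$ has $s(j) \geq 1$. Consequently $w' - u = s - e_j + e_k$ still lies in $S$ (its entries are nonnegative and sum to $d$), which gives $u \in S(w')$ and hence $S(w) \subseteq S(w')$, proving that $w$ is covered.

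For the uncovered direction, assume $w(j) \leq (n-1)d$ for every $j$ and let $w' \neq w$ be any other vertex at level $n$. I want to exhibit $u \in S(w) \setminus S(w')$. Since $|w| = |w'| = nd$ and $w \neq w'$, some coordinate $j_1$ satisfies $w(j_1) > w'(j_1)$. I would construct $u \in S(w)$ with $u(j_1) = w(j_1)$, because then $u(j_1) > w'(j_1)$, which forces $w' - u$ to have a negative coordinate and so $u \notin S(w')$. To produce such a $u$ I need $s = w - u \in S$ with $s(j_1) = 0$ and $s(i) \leq w(i)$ for $i \neq j_1$. Equivalently, I must distribute mass $d$ among the coordinates $i \neq j_1$ respecting the caps $w(i)$; the total available room is
\[
\sum_{i \neq j_1} w(i) \;=\; nd - w(j_1) \;\geq\; nd - (n-1)d \;=\; d,
\]
so a greedy allocation succeeds (and $q \geq 2$ guarantees at least one coordinate other than $j_1$).

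The argument reduces to two short coordinate calculations, so the only real ``obstacle'' is identifying the correct construction. In both directions the numerical hypothesis on $w(j)$ versus $(n-1)d$ is used in parallel fashion: it controls how much of the source vector $s$ must, or can, live in the singled-out slot. In the forward direction it forces $s(j) \geq 1$, leaving room to shift a unit into a neighbouring coordinate and cover $w$ by $w - e_j + e_k$; in the backward direction it guarantees enough room outside $j_1$ to absorb the full mass $d$, letting $u$ match $w$ exactly in coordinate $j_1$ and thereby escape every candidate $S(w')$.
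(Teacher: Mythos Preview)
Your proof is correct. The ``covered'' direction is essentially the paper's argument specialized to the simplest covering vertex: the paper builds a whole family $w' = w + \sigma$ with $\sigma(j) = -b$ for $b \in [1, c(w)]$, while you take $b=1$ and $\sigma = -e_j + e_k$. Your observation $u(j) \leq |u| = (n-1)d < w(j)$ is a clean shortcut to the inequality $s(j) \geq 1$ that the paper obtains via the quantity $c(w)$.

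The ``uncovered'' direction is where you genuinely diverge. The paper exhibits a \emph{single pair} of source vertices $u = DSV(w,j)$ and $u'$ (with $u'(j) = w(j)$) that cannot simultaneously belong to any $S(w')$, invoking Lemma~\ref{lem:BoundedMovement}; this requires picking $j$ with $w(j) \geq d$, which is where the hypothesis $n > q$ enters. You instead argue pointwise: for each candidate $w'$ you choose a coordinate $j_1$ with $w(j_1) > w'(j_1)$ and build a witness $u$ with $u(j_1) = w(j_1)$. Your route is more elementary---it avoids the DSV machinery and Lemma~\ref{lem:BoundedMovement} entirely, and in fact never uses $n > q$, only $q \geq 2$. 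The paper's two-witness construction, on the other hand, feeds directly into later arguments (the DSV vertices are the pivots for building chains), so its extra structure is not wasted.
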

\begin{proof}
		We will prove the first direction by proving the contrapositive. Suppose that $w(i) \leq (n-1)d$ for all $i \in \{1, \dots, q\}$.
		 Because $n>q$, by Remark \ref{rem:MaxCoordinate}  there is at least one $j$ with $w(j) \geq d$. 
			Since $\sum_{i=1}^qw(i)=nd$ and $n>q\geq 2$,
		\begin{equation}
				 \sum_{i\neq j}w(i) \geq d.
				 \end{equation}
				
	We make two vertices $u$ and $u'$ in the source 
    {set} of $w$ that cannot both be in the source set of any $w' 
    \neq w$, as follows. Define
				\be
				u(i)=\begin{cases}
				       w(j) -d &\quad \text { if } i=j\\
				      w(i) &\quad\text{ if } i \neq j
				      \end{cases}
				      \en
				      {(so that $u = DSV(w,j)$).}
				      Note that $\sum_iu(i)=(n-1)d$. 
				      
				      For each $i \neq j$ choose {$n(i) \in [0,w(i)]$} so that $\sum_{i \neq j} n(i)= \sum_{i \neq j} w(i)-d$, and let
				      \be
				      u'(i)=\begin{cases}
				      	w(j) &\quad\text{ if } i =j\\
				      	n(i) &\quad\text{ if } i \neq j.
				      	\end{cases}
				      \en
	The idea is that to get $u'$ from $w$ we subtract, in any 
 possible way, a total of $d$ from coordinates not equal to $j$. Hence $\sum_{i}u'(i)=(n-1)d$.
	
By Lemma \ref{lem:BoundedMovement} for every $w'$ such that 
$u=DSV(w,j)\in S(w')$ and $w'\neq w$, we have $w'(j)<w(j)$. 
If $s'=w'-u'$ is a nonnegative vector with $|s'|=d$, and 
		$s'(j) \geq 0$ then
		 $w'(j)  \geq w(j)$. 
		So it is not possible that there is a $w'$ with $u$ and $u'$ both in $S(w')$. 
		Therefore such a vertex $w$ is uncovered.

To prove the converse, suppose that there is a $j \in \{1, 
\dots,q\}$ such that $w(j) > (n-1)d$; 
we will prove that then $w$ is covered.
 First, {we have}
 $\sum_{i \neq j} w(i) < d$,
 so that
 \be
c(w)=d-\sum_{i \neq j} w(i) \in [1,d]. 
 \en

%{ When $w$ has been fixed, we will write just $c$ for $c(w)$.}

 Choose $b \in [1,c(w)]$, let $\sigma$ be an integer vector with $q$ components such that $\sigma(j)=-b$, choose $\sigma(i) 
 \geq 0$ for $i \neq j$ such that $\sum_{i \neq j} \sigma (i) 
 =b$, and let $w'=w+\sigma$. We claim that $w'$ covers $w$.

  Suppose that
 	$u \in S(w)$. Then $u=w-s$ for some $s$; and $s(j) \geq c(w)$, since
 	 $s(j)<c(w)$ implies $|s|<c(w)+\sum_{i \neq j}s(i)\leq c(w) + \sum_{i \neq j}w(i)=d$.
  Therefore such a {triple $w, s, j$} satisfies the following useful condition.
 	\begin{condition}\label{cond:s}
 		The vertex $w \in \mathcal V_n$, nonnegative integer vector $s$ with $|s|=d$, and $j\in \{1, \dots, q\}$ satisfy
 			$w(j)>(n-1)d$, 
 			 $s(i) \leq w(i)$ for all $i$, and $s(j)\in [c(w),d]$.
 	\end{condition}
 	(Conversely, for any $w, j,$ and $s$ that satisfy Condition \ref{cond:s}, $u=w-s  \in S(w)$. In fact the converse holds even if we no longer assume $s(j)\in [c(w),d]$.)
  
We know that $1 \leq b \leq c(w) \leq s(j)$, so $s(j) \geq b$.
 Therefore $s' = \sigma + s \geq 0$ 
 and $u+ s' = w'$, showing that $u \in S(w')$. 
 Hence $S(w) \subseteq S(w')$, and $w$ is covered. 
  \end{proof}

\begin{remark}\label{rem:uncovered}The previous proposition is 
stated in terms of the definition of covered. We will 
frequently use it to determine that a vertex is uncovered. In 
particular, Proposition \ref{prop:covered} could be stated as 
follows: for $n>q\geq 2$, a vertex $w\in \mathcal{V}_n$ is uncovered if 
and only if for every $l\in \{1,\dots,q\}$ we have $w(\ell)\leq 
(n-1)d$ or equivalently if there exists $j$ for which $d\leq w(j)\leq (n-1)d$.\end{remark}

 %\color{darkgreen} 
 \begin{comment}
 \begin{remark}\label{rem:covered}
 	{Varying $-\sigma(j)=b \in [1,c]$, and, for $i \neq j$, choosing $\sigma(i)$ with $\sum_{i \neq j} \sigma (i)  = b$ {and $\sigma+s \geq 0$ for all $s$ satisfying Condition \ref{cond:s},} produces {\em all} $w'$ that cover $w$, whether $w'$ itself is covered or not. 
 		{(We will see below that because $\sigma$ must satisfy Condition \ref{cond:sum} that $\sigma(i) \geq 0$ for all $i \neq j$.)}
 	Choosing $b=c$ gives in this way all {\em uncovered} $w'$ that cover $w$.}
 {	Proposition \ref{prop:covered} shows even a bit more.}
 \end{remark}	\end{comment}
 	 Continue to assume through the end of Remark \ref{rem:cov2}
 	 	%Proposition \ref{prop:uncovered} 
 	 	that $n>q\geq 2$.
 	 Now we will specify, given a covered vertex $w$, exactly which vertices $w' \neq w$ cover $w$.
 	% {Note: Now $w$ is no longer fixed, and neither is $b$. In Condition \ref{cond:sig}, $b$ is introduced as an abbreviation for $-\sigma(j)$ (using $=:$).}
 	 	% 	 	$w \in \mathcal{V}_n$ is covered, so that there is a $j \in \{1, \dots,q\}$ such that $w(j)>(n-1)d$, and continue to let $c=d-\sum_{i \neq j} w(i) \, (>0)$.}
 	 Since this information is not used in our main argument, the proof is omitted.
 	\begin{proposition}\label{prop:uncovered}
 		Suppose that $n > q \geq 2$. 
 		 Let $w,w'\in \mathcal{V}_n$ with $w\neq w'$. Assume that $j \in \{1, \dots, q\}$ satisfies $w(j)>(n-1)d$, so that $w$ is covered.
 		  Let $\sigma =w'-w$ (so that $\Sigma \sigma(i)=0$). 
 			Then $w'$ covers $w$ if and only if $\sigma$ satisfies the following equivalent conditions with respect to $w$:
 				\begin{condition}\label{cond:sum}
 				For each choice of $s$ satisfying Condition \ref{cond:s} with respect to $w$ and $j$, we have $s \neq \sigma + s \geq 0$. 
 			\end{condition}
 			\begin{condition}\label{cond:sig}
 				{Since $w \in \mathcal V_n$ and $j \in \{1, \dots,q\}$ satisfy $w(j)>(n-1)d$, defining $b=-\sigma(j)\in [1,c(w)]$, we have $\sigma(i) \geq 0$ for all $i \neq j$, and $\sum_{i \neq j} \sigma(i) =b$.}
 			\end{condition}
 			Moreover, $w'=w+\sigma$ covers $w$ and is itself uncovered if and only if $\sigma$ satisfies \ref{cond:sig} with $b=c(w)$.
 			\end{proposition}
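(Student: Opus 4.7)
The plan is to establish the proposition via three claims: (i) $w'$ covers $w$ if and only if Condition \ref{cond:sum} holds; (ii) Conditions \ref{cond:sum} and \ref{cond:sig} are equivalent; (iii) $w'$ covers $w$ and is itself uncovered if and only if Condition \ref{cond:sig} holds with $b = c(w)$. Throughout I set $\sigma = w' - w$ and note that $\sum_i \sigma(i) = 0$ since $|w| = |w'| = nd$.

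For (i), every $u \in S(w)$ has the form $u = w - s$ with $s \geq 0$, $|s| = d$, and $s \leq w$ coordinatewise. The constraint $\sum_{i \neq j} s(i) \leq \sum_{i \neq j} w(i) = d - c(w)$ forces $s(j) \geq c(w)$, so the elements of $S(w)$ are in bijection with the $s$ satisfying Condition \ref{cond:s}. Because $|\sigma| = 0$, membership $u \in S(w')$ is equivalent to $w' - u = \sigma + s \geq 0$, while the auxiliary inequality $\sigma + s \neq s$ merely records the given $\sigma \neq 0$. Thus $S(w) \subseteq S(w')$ reduces precisely to Condition \ref{cond:sum}.

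For (ii), the forward direction is obtained by feeding Condition \ref{cond:sum} the two extreme choices $s = d e_j$ and $s_c = c(w)\,e_j + \sum_{i \neq j} w(i)\,e_i$; both satisfy Condition \ref{cond:s} because $w(j) > (n-1)d \geq d$ and $\sum_{i \neq j} w(i) = d - c(w)$. The first forces $\sigma(i) \geq 0$ for $i \neq j$, and the second forces $\sigma(j) \geq -c(w)$. Combined with $\sum_i \sigma(i) = 0$ and $\sigma \neq 0$, this gives $b := -\sigma(j) = \sum_{i \neq j} \sigma(i) \in [1, c(w)]$, which is Condition \ref{cond:sig}. Conversely, if Condition \ref{cond:sig} holds, then for every $s$ satisfying Condition \ref{cond:s} we have $\sigma(i) + s(i) \geq 0$ coordinatewise (trivially for $i \neq j$; for $i = j$ because $s(j) \geq c(w) \geq b$), which is Condition \ref{cond:sum}.

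For (iii), by (i)--(ii) we know $w'$ covers $w$ exactly when Condition \ref{cond:sig} holds for some $b \in [1, c(w)]$. By Proposition \ref{prop:covered}, $w'$ is uncovered iff no coordinate of $w'$ exceeds $(n-1)d$. Since $w(j) = nd - (d - c(w)) = (n-1)d + c(w)$, a direct calculation gives $w'(j) = (n-1)d + c(w) - b$, whereas for $i \neq j$ we have $w'(i) = w(i) + \sigma(i) \leq (d - c(w)) + b \leq d \leq (n-1)d$ whenever $b \leq c(w)$. Thus the only coordinate of $w'$ that can exceed $(n-1)d$ is the $j$-th, and it does so precisely when $b < c(w)$; hence $w'$ is uncovered iff $b = c(w)$. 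The main obstacle, such as it is, is purely notational: isolating the correct pair of extreme source data $d e_j$ and $s_c$ so that the quantifier over all $s$ satisfying Condition \ref{cond:s} in Condition \ref{cond:sum} collapses to the explicit linear constraints on $\sigma$ in Condition \ref{cond:sig}; after that, (iii) is a coordinatewise inequality check using Proposition \ref{prop:covered}.
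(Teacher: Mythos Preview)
Your proof is correct and follows essentially the same approach as the paper's (omitted, but present in the source as a commented-out argument): the bijection between $S(w)$ and vectors $s$ satisfying Condition~\ref{cond:s}, the two extreme test vectors $de_j$ and $s_c$ to collapse the quantified Condition~\ref{cond:sum} to the linear constraints of Condition~\ref{cond:sig}, and the coordinate computation $w'(j)=(n-1)d+c(w)-b$ for the final part all match. Your bound $w'(i)\le (d-c(w))+b\le d$ for $i\ne j$ is in fact slightly sharper than the paper's $w'(i)<2d$, but the logic is otherwise identical.
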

{\begin{remark}\label{rem:cov2}
		The central part of
Proposition \ref{prop:uncovered} could be restated as: If there exists a $j\in \{1,2,\dots,q\}$ such that $w(j)>(n-1)d$, then $w'$ covers $w$ if and only if for $\sigma=w-w'$ we have $-d\leq \sigma(j)<0$, and for $i\neq j$, $\sigma(j)\geq 0$.
\end{remark}}

We mention a few more useful observations about uncovered vertices.
By Remark \ref{rem:MaxCoordinate}, every vertex in $\mathcal{V}_n$ has at 
least one coordinate greater than or equal to $nd/q$. 
Therefore, the hypothesis of the following lemma, (Lemma 
\ref{lem:SourceUncovered}) can be satisfied only if $nd/q\leq 
(n-2)d$, which implies that $2q/(q-1)\leq n$. See Figure \ref{fig:SourceUncovered}.

\begin{figure}
\begin{tikzpicture}[scale=1]
\foreach \n in {1,2}{
	\fill(0,-\n) circle (2pt);
	\fill(0,-\n+1) circle (2pt);
	\draw(0,-\n)--(-2,-\n-1);
	\draw(0,-\n)--(-1,-\n-1);
	\draw(0,-\n)--(0,-\n-1);
	\draw(0,-\n)--(1,-\n-1);
	\draw(0,-\n)--(2,-\n-1);
	\draw  (0,-\n) to [out=270,in=30] (0-1,-\n-1);
	\draw(0,-\n+1)--(-2,-\n);
	\draw(0,-\n+1)--(-1,-\n);
	\draw(0,-\n+1)--(0,-\n);
	\draw(0,-\n+1)--(1,-\n);
	\draw(0,-\n+1)--(2,-\n);
	\draw  (0,-\n+1) to [out=270,in=30] (-1,-\n);
	\draw (0,-\n) to [out=-20,in=95] (1,-\n-1);
	\draw (0,-\n) to [out=-20,in=95] (1,-\n-1);
	\draw  (0,-\n) to [out=270,in=160] (1,-\n-1);
	\draw  (0,-\n) to [out=270,in=160] (1,-\n-1);
	\draw (0,-\n+1) to [out=-20,in=95] (1,-\n);
	\draw (0,-\n+1) to [out=-20,in=95] (1,-\n);
	\draw  (0,-\n+1) to [out=270,in=160] (1,-\n);
	\draw  (0,-\n+1) to [out=270,in=160] (1,-\n);
	\pgfmathsetmacro{\p}{2*\n}
	\foreach \k in {1,2,...,\p}{
		\fill(\k,-\n) circle (2pt);
		\fill(-\k,-\n) circle (2pt);
		\draw(\k,-\n)--(\k-2,-\n-1);
		\draw(-\k,-\n)--(-\k-2,-\n-1);
		\draw(-\k,-\n) to [out=270,in=30] (-\k-1,-\n-1);
		\draw (\k,-\n) to [out=270,in=30] (\k-1,-\n-1);
		\draw (-\k,-\n) to [out=-20,in=95] (-\k+1,-\n-1);
		\draw (\k,-\n) to [out=-20,in=95] (\k+1,-\n-1);
		\draw  (-\k,-\n) to [out=270,in=160] (-\k+1,-\n-1);
		\draw  (\k,-\n) to [out=270,in=160] (\k+1,-\n-1);
		\draw(\k,-\n)--(\k-1,-\n-1);
		\draw(-\k,-\n)--(-\k-1,-\n-1);
		\draw(\k,-\n)--(\k,-\n-1);
		\draw(-\k,-\n)--(-\k,-\n-1);
		\draw(\k,-\n)--(\k+1,-\n-1);
		\draw(-\k,-\n)--(-\k+1,-\n-1);
		\draw(\k,-\n)--(\k+2,-\n-1);
		\draw(-\k,-\n)--(-\k+2,-\n-1);
  }}
    \foreach \k in {-6,-5,...,6}{
    \fill(\k,-3) circle (2pt);}
    \fill(0,-4) circle (2pt);
    \node at (0,-4.5){$(8,8)$};
    \foreach \k in {-2,-1,...,2}{
    \draw (0,-4)--(\k,-3);}
    
		\draw(-1,-3) to [out=270,in=160] (0,-4);
		\draw (-1,-3) to [out=-20,in=95] (0,-4);
  \draw(1,-3) to [out=270,in=30] (0,-4);
\end{tikzpicture}\caption{Level 4 is the first level to have a vertex whose entire source set is uncovered. In order for a vertex source set to be entirely uncovered, all coordinates should be less than or equal to $(n-2)d$ which in this case is $2(4)=8$.}
\label{fig:SourceUncovered}
\end{figure}
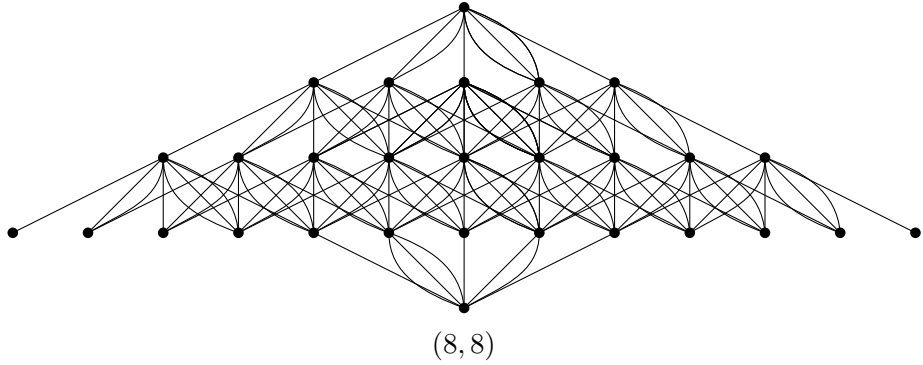
\begin{lemma}\label{lem:SourceUncovered}
    For $n>q\geq 2$, if $w\in \mathcal{V}_n$ is such that for every $l\in \{1,\dots,q\}$, $w(\ell)\leq (n-2)d$, then every vertex in $S(w)$ is uncovered.
\end{lemma}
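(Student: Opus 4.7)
The plan is to take an arbitrary $u\in S(w)$, which sits at level $n-1$, and prove by contradiction that $u$ is uncovered. First note that since $u=w-s$ for some $s\in S$, the bound $w(\ell)\leq(n-2)d$ transfers verbatim to $u(\ell)\leq(n-2)d$ for every $\ell$. Because $|u|=(n-1)d$ and $n\geq q+1$, pigeonhole (equivalently Remark \ref{rem:MaxCoordinate} applied at level $n-1$) supplies some coordinate $j$ with $u(j)\geq d$, so that $DSV(u,j)=u-de_j$ exists and lies in $S(u)$.

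Now suppose some $u'\in\mathcal V_{n-1}$ with $u'\neq u$ covers $u$, i.e., $S(u)\subseteq S(u')$. The strategy is to exhibit two elements of $S(u)$ whose presence in $S(u')$ forces contradictory constraints on $u'(j)$. On the one hand, $DSV(u,j)\in S(u)\subseteq S(u')$ combined with Lemma \ref{lem:BoundedMovement} yields $u'(j)<u(j)$. On the other hand, the hypothesis $u(j)\leq(n-2)d$ gives $\sum_{\ell\neq j}u(\ell)=(n-1)d-u(j)\geq d$, so I can pick nonnegative integers $t(\ell)\leq u(\ell)$ for $\ell\neq j$ with $\sum_{\ell\neq j}t(\ell)=d$, and set $u_2=u-\sum_{\ell\neq j}t(\ell)e_\ell$. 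Then $u_2\in S(u)$ and $u_2(j)=u(j)$, so $u_2\in S(u')$ forces $u'(j)\geq u_2(j)=u(j)$, contradicting the strict inequality from the previous step. Hence no such $u'$ exists and $u$ is uncovered.

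I expect no serious obstacle in executing this plan; the construction is completely symmetric in $j$, and the two inequalities collide exactly as designed. The only thing worth flagging is that Proposition \ref{prop:covered} is stated for levels \emph{strictly} greater than $q$, while $u$ sits at level $n-1$, and the lemma's hypotheses only force $n-1\geq q$ (equality is possible already at $q=3$, $n=4$, as one sees from the bound $n\geq 2q/(q-1)$ noted just before the statement). That boundary case is the only reason I would run the argument above directly from Lemma \ref{lem:BoundedMovement} rather than simply invoking Proposition \ref{prop:covered} with $n$ replaced by $n-1$.
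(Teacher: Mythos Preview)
Your proof is correct and in substance matches the paper's. The paper simply observes $u(\ell)\leq w(\ell)\leq(n-2)d$ for all $\ell$ and then invokes Remark~\ref{rem:uncovered} (the restatement of Proposition~\ref{prop:covered}) at level $n-1$; the two source vertices $DSV(u,j)$ and $u_2$ that you construct are precisely the pair built in the proof of Proposition~\ref{prop:covered}, so you are unpacking that citation rather than doing anything genuinely different. Your remark about the boundary case $n-1=q$ is well taken: Proposition~\ref{prop:covered} and Remark~\ref{rem:uncovered} are stated only for levels strictly greater than $q$, so the paper's citation is formally outside its stated range when, say, $q=3$ and $n=4$. By arguing directly from Lemma~\ref{lem:BoundedMovement} you cover that case cleanly; alternatively, one can check that the relevant direction of Proposition~\ref{prop:covered} actually only requires the level to be $\geq q$, so the paper's short proof is easily repaired.
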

\begin{proof}
    Let $u\in S(w)$. By Remark \ref{rem:MaxCoordinate}, for all $l\in \{1,\dots,q\}$, $u(\ell)\leq w(\ell)$. Since $u\in \mathcal{V}_{n-1}$, by Remark \ref{rem:uncovered}, $u$ is uncovered since for every $\ell \in\{i...q\}$.  $u(\ell)\leq (n-2)d$. 
\end{proof}
\begin{corollary}\label{cor:SourceUncovered}
    Given $w\in \mathcal{V}_n$, if there exists $j \in \{1,\dots,q\}$ such that $2d\leq w(j)\leq (n-2)d$, then every vertex in $S(w)$ is uncovered.
\end{corollary}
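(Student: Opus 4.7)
The plan is to reduce directly to Lemma \ref{lem:SourceUncovered}, whose hypothesis is that \emph{every} coordinate of $w$ is bounded above by $(n-2)d$. Under the hypothesis of the corollary, we are only told that this bound holds for a single coordinate $j$; the task is to show that the lower bound $w(j) \geq 2d$ forces the same upper bound on all other coordinates automatically.

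First I would observe that the upper bound $w(j) \leq (n-2)d$ in the hypothesis already takes care of coordinate $j$. For any other coordinate $\ell \neq j$, I would use the fundamental constraint that $w \in \mathcal V_n$ satisfies $\sum_{k=1}^{q} w(k) = nd$. Combining this with the assumption $w(j) \geq 2d$ gives
\begin{equation}
\sum_{k \neq j} w(k) \;=\; nd - w(j) \;\leq\; nd - 2d \;=\; (n-2)d,
\end{equation}
and since each $w(\ell)$ for $\ell \neq j$ is nonnegative and at most the full sum on the left, we conclude $w(\ell) \leq (n-2)d$ for every such $\ell$ as well.

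At this point every coordinate of $w$ satisfies $w(\ell) \leq (n-2)d$, so Lemma \ref{lem:SourceUncovered} applies and yields that every vertex in $S(w)$ is uncovered. There is no real obstacle here: the argument is purely a bookkeeping step showing that a single coordinate being bounded both below by $2d$ and above by $(n-2)d$ propagates the upper bound to all other coordinates via the level-sum identity, at which point the previous lemma finishes the job.
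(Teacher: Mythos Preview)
Your proof is correct and follows essentially the same route as the paper's: use $w(j)\geq 2d$ together with the level-sum identity $\sum_k w(k)=nd$ to get $\sum_{\ell\neq j} w(\ell)\leq (n-2)d$, deduce $w(\ell)\leq (n-2)d$ for every $\ell$, and then apply Lemma~\ref{lem:SourceUncovered}.
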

\begin{proof}
   Since $2d\leq w(j)$, we have that \[2d+\sum_{l\neq j}w(\ell)\leq w(j)+\sum_{l\neq j}w(\ell)=nd.\]
   Hence, $\sum_{l\neq j}w(\ell)\leq (n-2)d$. 
   Therefore, for each $l\neq j$ we have that $w(\ell)\leq (n-2)d$ and by assumption, 
   $w(j)\leq (n-2)d$. Hence, for all $\ell \in\{1,\dots,q\}$, $w(\ell)\leq (n-2)d$. 
   Then by Lemma \ref{lem:SourceUncovered}, every vertex in $S(w)$ is 
   uncovered.
\end{proof}

\begin{figure}
\begin{tikzpicture}[scale=1]
\foreach \n in {1,2}{
	\fill(0,-\n) circle (2pt);
	\fill(0,-\n+1) circle (2pt);
	\draw(0,-\n)--(-2,-\n-1);
	\draw(0,-\n)--(-1,-\n-1);
	\draw(0,-\n)--(0,-\n-1);
	\draw(0,-\n)--(1,-\n-1);
	\draw(0,-\n)--(2,-\n-1);
	\draw  (0,-\n) to [out=270,in=30] (0-1,-\n-1);
	\draw(0,-\n+1)--(-2,-\n);
	\draw(0,-\n+1)--(-1,-\n);
	\draw(0,-\n+1)--(0,-\n);
	\draw(0,-\n+1)--(1,-\n);
	\draw(0,-\n+1)--(2,-\n);
	\draw  (0,-\n+1) to [out=270,in=30] (-1,-\n);
	\draw (0,-\n) to [out=-20,in=95] (1,-\n-1);
	\draw (0,-\n) to [out=-20,in=95] (1,-\n-1);
	\draw  (0,-\n) to [out=270,in=160] (1,-\n-1);
	\draw  (0,-\n) to [out=270,in=160] (1,-\n-1);
	\draw (0,-\n+1) to [out=-20,in=95] (1,-\n);
	\draw (0,-\n+1) to [out=-20,in=95] (1,-\n);
	\draw  (0,-\n+1) to [out=270,in=160] (1,-\n);
	\draw  (0,-\n+1) to [out=270,in=160] (1,-\n);
	\pgfmathsetmacro{\p}{2*\n}
	\foreach \k in {1,2,...,\p}{
		\fill(\k,-\n) circle (2pt);
		\fill(-\k,-\n) circle (2pt);
		\draw(\k,-\n)--(\k-2,-\n-1);
		\draw(-\k,-\n)--(-\k-2,-\n-1);
		\draw(-\k,-\n) to [out=270,in=30] (-\k-1,-\n-1);
		\draw (\k,-\n) to [out=270,in=30] (\k-1,-\n-1);
		\draw (-\k,-\n) to [out=-20,in=95] (-\k+1,-\n-1);
		\draw (\k,-\n) to [out=-20,in=95] (\k+1,-\n-1);
		\draw  (-\k,-\n) to [out=270,in=160] (-\k+1,-\n-1);
		\draw  (\k,-\n) to [out=270,in=160] (\k+1,-\n-1);
		\draw(\k,-\n)--(\k-1,-\n-1);
		\draw(-\k,-\n)--(-\k-1,-\n-1);
		\draw(\k,-\n)--(\k,-\n-1);
		\draw(-\k,-\n)--(-\k,-\n-1);
		\draw(\k,-\n)--(\k+1,-\n-1);
		\draw(-\k,-\n)--(-\k+1,-\n-1);
		\draw(\k,-\n)--(\k+2,-\n-1);
		\draw(-\k,-\n)--(-\k+2,-\n-1);
  }}
    \foreach \k in {-6,-5,...,6}{
    \fill(\k,-3) circle (2pt);}
    \fill[red](0,-2) circle (3pt);
    \fill[blue](-2,-3) circle (3pt);
    \fill[blue](-1,-3) circle (3pt);
    \fill[blue](0,-3) circle (3pt);
    \fill[blue](1,-3) circle (3pt);
    \fill[blue](2,-3) circle (3pt);
\end{tikzpicture}\caption{Vertex $(4,4)$, pictured in red, in level $2$ is uncovered, and every vertex in level $3$ with $(4,4)$ its source set, pictured in blue, is also uncovered.}
\label{fig:TargetUncovered}
\end{figure}
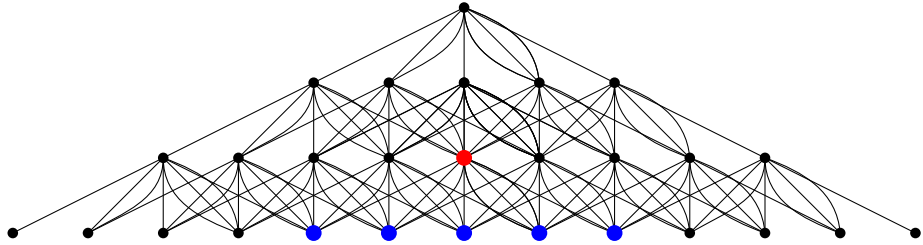

\begin{lemma}\label{lem:TargetUncovered}
   For $n-1>q\geq 2$, if $u\in \mathcal{V}_{n-1}$ is uncovered, and $u\in S(w)$, then $w$ is uncovered. 
\end{lemma}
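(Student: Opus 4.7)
My plan is to apply the characterization of uncovered vertices from Remark \ref{rem:uncovered} twice: once to translate the hypothesis on $u$ into a coordinate-wise inequality, and once to verify the analogous inequality for $w$.

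First, since $u \in \mathcal{V}_{n-1}$ is uncovered and the hypothesis $n-1 > q \geq 2$ puts us in the range where Remark \ref{rem:uncovered} applies at level $n-1$, I can conclude that $u(\ell) \leq (n-2)d$ for every $\ell \in \{1,\dots,q\}$. This is the \emph{only} fact about $u$ that I will need.

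Next, since $u \in S(w)$, there is a source vector $s \in S$ with $w = u + s$, that is, $s \geq 0$, $|s|=d$, and in particular $s(\ell) \leq d$ for every coordinate $\ell$. Adding coordinate-wise gives
\begin{equation*}
w(\ell) \;=\; u(\ell) + s(\ell) \;\leq\; (n-2)d + d \;=\; (n-1)d
\end{equation*}
for each $\ell \in \{1,\dots,q\}$.

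Finally, since $w \in \mathcal{V}_n$ with $n > q \geq 2$ (which follows from $n-1 > q$), the inequality $w(\ell) \leq (n-1)d$ for every $\ell$ is exactly the condition in Remark \ref{rem:uncovered} certifying that $w$ is uncovered. This completes the proof. There is no real obstacle here; the lemma is essentially an immediate bookkeeping consequence of the characterization of uncovered vertices by coordinate bounds, combined with the fact that moving one level downward increases each coordinate by at most $d$ (Remark \ref{rem:MaxCoordinate}).
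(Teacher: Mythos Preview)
Your proof is correct and follows essentially the same approach as the paper: both apply Remark \ref{rem:uncovered} to $u$ to obtain $u(\ell)\leq (n-2)d$, then use the fact that passing from $u$ to $w$ adds at most $d$ to each coordinate (you via $w=u+s$ with $|s|=d$, the paper via Remark \ref{rem:MaxCoordinate}) to conclude $w(\ell)\leq (n-1)d$, and finish by Remark \ref{rem:uncovered} again.
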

\begin{proof}
 Assume $u\in \mathcal{V}_{n-1}$ is uncovered. By Remark \ref{rem:uncovered}, for all $l\in \{1,\dots,q\}$, $u(\ell)\leq (n-2)d$. Further, for all $\ell \in\{1,\dots,q\}$, $w(\ell)\leq u(\ell)+d$ (by Remark \ref{rem:MaxCoordinate}). 
 So, $w(\ell)\leq u(\ell)+d\leq (n-2)d+d=(n-1)d$. Then by Remark \ref{rem:uncovered}, $w$ is uncovered. See Figure \ref{fig:TargetUncovered}.
\end{proof}

\section{Chains and Links}\label{sec:chains}

In this section we develop some of the machinery that will be used in the main theorem,
continuing to work in a fixed polynomial shape diagram with path space $X$. 
\begin{definition}\label{def:chain}
		Given $k, n\geq 1$, a \emph{chain of length $k$ at level $n$} is a double sequence
		\begin{equation}
		w_0 \overset{u_0}{\longleftrightarrow} w_1 \overset{u_1}{\longleftrightarrow} w_2 \cdots w_{k-1} \overset{u_{k-1}}{\longleftrightarrow} w_k
		\end{equation}
		of vertices $w_0, \dots, w_k$ at level $n$, called {\em splitting vertices}, 
		and $u_0, \dots, u_{k-1}$ at level $n-1$, called {\em shared vertices}, 
		such that for every $0 \leq \ell < k$, $u_{\ell} \in S(w_{\ell}) \cap S(w_{\ell+1})$. 
		We say that the chain {\em links the two vertices $w_0$ and $w_k$.}
		The chain is \emph{straight} if all vertices are distinct.
		A chain of length $1$ is called a {\em link}.
	\end{definition}

     \begin{definition}\label{def:dc}
        We say that a straight chain \bee
	w_0 \overset{u_0}{\longleftrightarrow} w_1 \overset{u_1}{\longleftrightarrow} w_2\overset{u_2}{\longleftrightarrow} w_3 \overset{u_3}{\longleftrightarrow} w_4 \overset{u_4}{\longleftrightarrow}... \overset{u_{k-1}}{\longleftrightarrow} w_{k}\ene is a \emph{distinguished chain in direction $j$}, if for each $1\leq \ell< k$, $u_\ell$ is $DSV(w_{\ell},j)$.
    \end{definition}

     We do not want to impose in the above definition any restrictions on $u_0$ because our ultimate goal is to extend any arbitrary link to a distinguished chain.  
     	The following definition will be applied to a depth $i$ pair to find sequences of vertices at levels $i$ and $i+1$ that eventually cause a contradiction.
    
\begin{definition}\label{def:xchain}
		For $i \geq 1$ and a pair of paths $x,x'$, a chain 
		\bee
		w_0 \overset{u_0}{\longleftrightarrow} w_1 \overset{u_1}{\longleftrightarrow} w_2 \cdots w_{k-1} \overset{u_{k-1}}{\longleftrightarrow} w_k
		\ene
		of length $k$ at level $i+1$ is called an {\em $x,x'$ chain} if 
			there is a strictly monotonic sequence of integers $m_0=0,m_1, \dots , m_{k-1}$, called \emph{the switching times}, such that \\
			for each $1 \leq \ell \leq  k$, $u_{\ell}=v_{i}(T^{m_{\ell}}x)=v_i(T^{m_{\ell}}x')$;\\
			$w_0=v_{i+1}(x)$, $w_1=v_{i+1}(x')$;\\
				for odd $\ell \in [1,k)$, $w_{\ell}=v_{i+1}(T^{m_{\ell-1}}x')=v_{i+1}(T^{m_{\ell}}x')$;\\
				{for even $\ell \in [2,k)$}, $w_{\ell}=v_{i+1}(T^{m_{\ell-1}}x)=v_{i+1}(T^{m_{\ell}}x)$;\\
				and $w_k=v_{i+1}(T^{m_{k-1}}x)$ if $k$ is even, $w_k=v_{i+1}(T^{m_{k-1}}x')$ if $k$ is odd. \\
				An $x,x'$ chain of length $1$ is called an {\em $x,x'$ link}.
			See Figure \ref{fig:xx'chain}. 
	\end{definition}

	\begin{figure}[h]
		\begin{tikzpicture}[scale=1.5]
			\fill(0,0) circle (2pt);
			\fill(1,1) circle (2pt);
			\fill(2,0) circle (2pt);
			\fill(3,1) circle (2pt);
			\fill(4,0) circle (2pt);
			\fill(5,1) circle (2pt);
			\fill(6,1) circle (2pt);
			\fill(7,0) circle (2pt);
			\fill(8,1) circle (2pt);
			\fill(9,0) circle (2pt);
			\draw(0,0)--(1,1)--(2,0)--(3,1)--(4,0)--(5,1);
			\draw(6,1)--(7,0)--(8,1)--(9,0);
			%\draw[dashed](-1,1)--(0,0)--(0,1);
			%\draw[dashed](2,0)--(2,1);
			%\draw[dashed](4,1)--(4,0)--(5,1);
			\node at (0,-.2){$w_0$};
			\node at (2,-.2){$w_1$};
			\node at (4,-.2){$w_2$};
			\node at (1,1.2){$u_0$};
			\node at (3,1.2){$u_1$};
			\node at (5,1.2){$u_2$};
			\node at (6,1.2){$u_{k-2}$};
			\node at (7,-.2){$w_{k-1}$};
			\node at (8,1.2){$u_{k-1}$};
			\node at (9,-.2){$w_k$};
			\node[fill=white] at (.25,.5){$x_i$};
			\node at (5.5,1){$\dots\dots$};
			\node[fill=white] at (1.75,.4){$x'_i$};
			\node[fill=white] at (2.77,.3){{$(T^{m_1}x')_i$}};
			\node[fill=white] at (3.65,.8){{$(T^{m_1}x)_i$}};
			\node[fill=white] at (4.77,.3){{$(T^{m_2}x)_i$}};
			\node[fill=white] at (6.7,.8){{$(T^{m_{k-2}}x')_i$}};
			\node[fill=white] at (7.77,.3){{$(T^{m_{k-1}}x')_i$}};
			\node[fill=white] at (8.7,.8){{$(T^{m_{k-1}}x)_i$}};
		\end{tikzpicture}
		\caption{An example of an $x,x'$ chain of length $k$ where $k$ is even. Other vertices are allowed, but those pictured are the ones that make up the chain.}
		\label{fig:xx'chain}
	\end{figure}

 \begin{lemma}\label{lem:LinkLemma}[Link Lemma]
	For $i>q\geq 2$, if $x,x'$ is a depth $i$ pair of paths, then
	every straight $x,x'$ link 
	$w_0 \overset{u_0}{\longleftrightarrow} w_1$  with $v_{i+1}(x)=w_0$, $v_{i+1}(x')=w_1$, such that 
 there exists a $j\in \{1,\dots,q\}$ for which $2d\leq w_1(j)\leq w_0(j)\leq (i-1)d$, extends to a straight 
 $x,x'$ chain of length $2$
 \[w_0 \overset{u_0}{\longleftrightarrow} w_1\overset{DSV(w_1,j)}{\longleftrightarrow}w_2,\]
 with $m_1$, as in the definition of an $x,x'$ chain, satisfying $v_{i+1}(T^{m_1}x')=w_1$, $v_{i+1}(T^{m_1}x)=w_2$, and moreover for all $m\in [0,m_1]$, $v_{i+1}(T^mx')=w_1$, and $v_{i+1}(T^mx)\in \{w_0,w_2\}$.
\end{lemma}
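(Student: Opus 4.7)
The plan is to construct a time $m_1$ and a vertex $w_2$, and verify that they yield a straight $x,x'$ chain of length $2$.

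I first observe that the hypothesis forces $DSV(w_1,j) \notin S(w_0)$: if $DSV(w_1,j) = w_1 - de_j$ were in $S(w_0)$, then $w_0 - DSV(w_1,j) = w_0 - w_1 + de_j$ would be a nonnegative $q$-vector of total weight $d$; combined with $w_0(j) \geq w_1(j)$ and $|w_0| = |w_1| = (i+1)d$, this forces $w_0 = w_1$, contradicting straightness. In particular $u_0 \neq DSV(w_1,j)$. Now the orbit of $x'$ has a run at $w_1$ of length $\dim w_1$ containing time $0$, and during this run it cycles through every source vertex of $w_1$ in the $\xi$-order of edges into $w_1$; since $u_0 \neq DSV(w_1,j)$, its visits to $DSV(w_1,j)$ within this run all occur at nonzero times. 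After possibly replacing $T$ by $T^{-1}$, I take $m_1>0$ to be the smallest such positive time, set $u_1 = DSV(w_1,j)$, and define $w_2 = v_{i+1}(T^{m_1}x)$. By the $i$-coding equality, $v_i(T^{m_1}x) = u_1 \in S(w_2)$; the condition $v_{i+1}(T^m x') = w_1$ for $m \in [0,m_1]$ is automatic from the choice of $m_1$ within the run; and $w_2 \neq w_0$ since $u_1 \notin S(w_0)$.

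The main obstacle is showing $w_2 \neq w_1$. Arguing by contradiction, suppose $w_2 = w_1$, so that both $T^{m_1}x$ and $T^{m_1}x'$ lie at $(DSV(w_1,j), w_1)$. By minimality of $m_1$, the Vershik transition for $x'$ at time $m_1$ advances its level-$i$ edge within $\xi$-order into $w_1$, moving from a block whose source is not $DSV(w_1,j)$ to the next block with source $DSV(w_1,j)$; consequently the first $i$ edges of $T^{m_1-1}x'$ form the maximal path into the source of the previous block, and those of $T^{m_1}x'$ form the minimal path into $DSV(w_1,j)$. By the $i$-coding equality, the same holds for $x$. I then split into two cases according to whether $(T^{m_1-1}x)_i$ is maximal into $v_{i+1}(T^{m_1-1}x)$. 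In the nonmaximal case, the Vershik map advances $(T^{m_1-1}x)_i$ within $\xi$-order into the same level-$(i+1)$ vertex, forcing $v_{i+1}(T^{m_1-1}x) = w_1$ and aligning $x$'s and $x'$'s block structures at $w_1$; this forces the run of $x$ at $w_1$ containing $m_1$ to begin at a nonpositive time, contradicting $v_{i+1}(x) = w_0 \neq w_1$. In the maximal case, the Vershik map advances an edge above level $i$ and resets below, so $(T^{m_1}x)_i$ is the minimal edge into $w_1$ with source $DSV(w_1,j)$; pushing this back through the block structure of $x'$'s run at $w_1$ exhibits a positive visit of $x'$ to $DSV(w_1,j)$ strictly earlier than $m_1$, contradicting the minimality of $m_1$.

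Finally, the condition $v_{i+1}(T^m x) \in \{w_0, w_2\}$ for $m \in [0,m_1]$ is verified by applying the same minimality analysis to any hypothetical intermediate vertex $w_3 \notin \{w_0, w_2\}$ visited by $x$ in this window: such a $w_3$, paired with $x'$ still at $w_1$, would yield a shorter candidate chain with switching time strictly less than $m_1$, contradicting its minimality. I expect the main technical difficulty to lie in the case analysis establishing $w_2 \neq w_1$, in particular in the nonmaximal case where the alignment of block structures of $x$ and $x'$ within their runs at $w_1$ must be carefully tracked using the $\xi$-ordering of edges into $w_1$ and the maximal-path/minimal-path resetting behavior of the Vershik map.
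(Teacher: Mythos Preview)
Your setup and the verification that $DSV(w_1,j)\notin S(w_0)$ and $w_2\neq w_0$ are fine and match the paper. But there are two real gaps.

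\textbf{The argument for $w_2\neq w_1$.} Your case analysis never uses the bound $w_0(j)\leq (i-1)d$, and in fact the conclusion fails without it: if $w_0$ were covered by $w_1$ the orbit of $x$ could perfectly well enter $w_1$ after time $0$ and sit there at time $m_1$. Concretely, in your nonmaximal case you assert that the run of $x$ at $w_1$ containing $m_1$ ``begins at a nonpositive time''; but all you have established is $v_{i+1}(T^{m_1-1}x)=w_1$, which only pushes the start of the run back to some $a\leq m_1-1$, not to $a\leq 0$. Sharing the first $i$ edges with $x'$ gives no control over $\xi((T^{m_1}x)_i)$ versus $\xi((T^{m_1}x')_i)$, so there is no alignment of the two block structures to exploit. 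In your maximal case you conclude that the minimal edge into $w_1$ has source $DSV(w_1,j)$; this forces $x'$ to meet $DSV(w_1,j)$ at the \emph{start} of its run at $w_1$, but that start is a nonpositive time, not a positive time below $m_1$, so minimality of $m_1$ is not contradicted. The paper instead argues: if $w_2=w_1$, one compares the $\xi$-labels of $(T^{m_1}x)_i$ and $(T^{m_1}x')_i$, finds a negative time $m_2$ at which both orbits meet $DSV(w_1,j)$ while $x'$ is still at $w_1$, and observes that during $[m_2,m_1]$ the orbit of $x$ enters and leaves $w_0$, forcing $S(w_0)\subseteq S(w_1)$. This contradicts the fact that $w_0$ is uncovered, which is exactly what $d\leq w_0(j)\leq (i-1)d<id$ provides.

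\textbf{The ``moreover'' clause.} Your claim that an intermediate $w_3\notin\{w_0,w_2\}$ would yield ``a shorter candidate chain with switching time strictly less than $m_1$'' is not a contradiction: $m_1$ was chosen minimal for the condition that $x'$ meets $DSV(w_1,j)$, not for any condition on where $x$ sits at level $i+1$. Nothing about $x$ visiting some other $w_3$ produces an earlier visit of $x'$ to $DSV(w_1,j)$. The paper's argument is structural: any such $w_3$ is entered and exited by $x$ entirely within $(0,m_1)$, so $x$ sweeps through all of $S(w_3)$; since $v_i(T^mx)\in S(w_1)$ throughout, $S(w_3)\subseteq S(w_1)$, i.e.\ $w_1$ covers $w_3$. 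But $2d\leq w_1(j)\leq (i-1)d$ makes every vertex of $S(w_1)$ uncovered (Corollary~\ref{cor:SourceUncovered}), hence every level-$(i+1)$ target of such a vertex is uncovered (Lemma~\ref{lem:TargetUncovered}), so $w_3$ cannot be covered. This is precisely where the lower bound $2d\leq w_1(j)$ enters; your argument never invokes it.
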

\begin{proof}
By assumption, there exists a $j$ such that $2d\leq w_1(j)\leq 
w_0(j)\leq (i-1)d$, which by Remark \ref{rem:uncovered} is more than sufficient to ensure that $w_0$ and 
$w_1$ are both uncovered. 
Since $w_1(j)\leq w_0(j)$, by Lemma \ref{lem:BoundedMovement}
	$DSV(w_1,j) \notin S(w_0)$. In particular, $DSV(w_1,j) \neq u_0 =v_i(x')$. 
Choose $m_1 \neq 0$ with $|m_1|$ minimal such that 
$v_i(T^{m_1}x')=DSV(w_1,j)$, and for all $m\in 
[0,m_1]$ (or $[m_1,0]$ if $m_1<0$), $v_{i+1}(T^mx')=w_1$. 
Then define 
$w_2=v_{i+1}(T^{m_1}x)$. Replacing $T$ by $T^{-1}$ if necessary, assume $m_1>0$. Since $x,x'$ are depth 
$i$, $v_i(T^{m_1}x)=DSV(w_1,j)$ and $m_1$ is also the first time that the forward orbit of $x$ hits $DSV(w_1,j)$.

Since $DSV(w_1,j) \notin S(w_0)$, $w_2 \neq w_0$.
We claim that also $w_2 \neq 
w_1$. Suppose to the contrary, that $w_2=w_1$. By the way we chose 
$m_1$, $x'$ is still at $w_1$ at time $m_1$. Look at the edge numbers 
based at $w_1$.  
The orbit of $x$ has entered $w_1$ before or at time $m_1$, but after 
$x'$ (which is at $w_1$ at time 
$0$), so $\xi((T^{m_1}x)_{i}) < \xi((T^{m_1}x')_{i})$. 
In other words, $(T^{m_1}x)_i$ connects $w_1$ and $DSV(w_1,j)$, as does $(T^{m_1}x')_i$, 
and $(T^{m_1}x)_i$ has smaller edge label than $(T^{m_1}x')_i$.
				
Going backwards in time, the orbit of $x'$ has to hit $DSV(w_1,j)$ again. Hence it passes through edge $(T^{m_1}x)_i$ at some time $m_2<m_1$. Specifically, the backward orbit of $x'$ goes through edges based at $w_1$, starting 
at edge number $\xi((T^{m_1}x')_{i})$ and working down to edge number 
$\xi((T^{m_1}x)_{i})$. It stays at $w_1$ all this time, at least 
through time $m_2$.  By the minimality of $m_1$, there is no time $m$ in 
$[0,m_1)$ such that $v_i(T^mx')=DSV(w_1,j)$. Therefore, it 
is necessarily the case that $m_2<0$. For all $m\in 
[m_2,m_1]$, since $v_{i+1}(T^mx')=w_1$, we have $v_i(T^mx')=v_i(T^mx)\in 
S(w_1)$. Further, since $DSV(w_1,j)\notin S(w_0)$, we know that $v_i(T^{m_2}x)\neq w_0$. 
Therefore we have that $v_i(T^{m_2}x)\neq w_0, v_i(x)=w_0$, and 
$v_i(T^{m_1}x)\neq w_0$. Then by the definition of the Vershik map, 
the orbit of $x$ must pass through every vertex in $S(w_0)$ between 
time $m_2$ and $m_1$. Hence it is necessarily the case that 
$S(w_0)\subseteq S(w_1)$ and $w_1$ covers $w_0$. This contradicts our assumption that $w_0$ 
is uncovered, proving that $w_2\neq w_1$ and
			\begin{equation}
		w_0 \overset{u_1}{\longleftrightarrow} w_1 \overset{DSV(w_1,j)}{\longleftrightarrow} w_2 
		\end{equation}
		is a straight $x,x'$ chain of length $2$.

  We claim that between times $0$ and $m_1$, the orbit of 
 $x$ cannot pass through any vertices at level $i+1$ other than 
 $w_0$ and $w_2$. For all 
 $m$ in  $[0,m_1]$, $v_{i+1}(T^mx')=w_{1}$ and hence 
 $v_{i}(T^mx')\in S(w_{1})$; further, since $x$ and $x'$ are a depth 
 $i$ pair, $v_i(T^mx)=v_i(T^mx')\in S(w_1)$. Therefore, if the orbit of 
 $x$ leaves $w_{0}$ and passes through some vertex $w\neq 
 w_{2}$ between time $0$ and $m_{1}$, then $w$ 
 must be covered by $w_{1}$. 
 
 However, since $2d\leq w_{1}(j)\leq 
 (i-1)d$, by Corollary \ref{cor:SourceUncovered} every vertex in $S(w_{1})$ is uncovered. 
 In particular, for each $m\in [0, m_1]$, $v_{i}(T^mx)$ is 
 uncovered. Then by Lemma \ref{lem:TargetUncovered}, $v_{i+1}(T^mx)$ 
 is also uncovered. In particular, $v_{i+1}(T^mx)$ cannot be covered 
 by $w_{1}$. Therefore between time $0$ and $m_{1}$, the 
 orbit of $x$ can only pass through the vertices $w_{0}$ 
 and $w_{2}$.
	\end{proof}

\section{{Inherent Expansiveness of Polynomial Shape Diagrams}}\label{sec:main}
$X$ continues to denote the path space of a fixed polynomial shape diagram (Definition \ref{def:polyshape}).
We intend to prove that outside of the negligible exceptional set 
defined above (Definition \ref{def:genericpaths}) there are no 
pairs of paths of any large enough depth. The idea is to show that 
for any sufficiently large $i$ and any possible depth $i$ pair 
$x,x'$ in $X(\xi)$ (the set of paths which are not in the orbit of 
any maximal or minimal path and have dense biinfinite orbits), one 
can construct a distinguished $x,x'$ chain in which the splitting 
vertices are uncovered and the orbits of $x$ and $x'$ meet 
vertices of the chain in an alternating manner. The chain will 
have sufficient length ($2d+3$) to ensure that when we look at 
level $i+2$, we find that the {alternation} contradicts the 
definition of the Vershik map. To construct this chain, it will be 
necessary to find two uncovered vertices forming a link (Lemma 
\ref{lem:GettingToTheGivenMV}), each with $j$'th component of size 
at least $2d^2+4d$ for some $j\in \{1,\dots,q\}$, with which to 
start the chain. The following Lemma 
allows us to find such a link if we look low enough in the 
diagram.

\begin{lemma}[Chain Starting]\label{lem:GettingToTheGivenMV}
    Let an ordering $\xi$ be given, $N= 4dq+6q$, and $i>N$. 
    For any depth $i$ pair $x,x'\in X(\xi)$
    there exists a time in their orbits when at level $i+1$ 
    they pass through distinct uncovered vertices $v$ and $v'$ 
    for which there is a $j\in \{1,2,\dots,q\}$ such that 
    \begin{equation}\label{eqn:ChainStart}
    2d^2+4d\leq v'(j)<v(j)\leq (i-1)d<id.\end{equation} 
\end{lemma}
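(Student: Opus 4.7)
My plan is to find a time $m$ in the joint orbit at which the common level-$i$ source $u := v_i(T^m x) = v_i(T^m x')$ has all coordinates in a convenient central range, and then verify that such a time automatically produces the conclusion.

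Define the \emph{preferred region} $\mathcal{P} := \{u \in \mathcal{V}_i : 2d^2 + 4d \leq u(\ell) \leq (i-2)d \text{ for all } \ell\}$. A counting argument using $\sum_\ell u(\ell) = id$ shows $\mathcal{P}$ is non-empty precisely when $i \geq 2qd + 4q$ (to make the lower bounds jointly feasible) and $i \geq 2q/(q-1)$ (to make the upper bounds feasible); both are implied by $i > N = 4dq + 6q$. Next I verify that any ``bad'' time $m$ (where $v := v_{i+1}(T^m x) \neq v_{i+1}(T^m x') =: v'$) with $u := u_m \in \mathcal{P}$ already suffices. Since $u \in S(v) \cap S(v')$, Remark \ref{rem:MaxCoordinate} gives $u(\ell) \leq v(\ell), v'(\ell) \leq u(\ell) + d$, so every coordinate of $v$ and $v'$ is at most $(i-1)d < id$, making them uncovered by Remark \ref{rem:uncovered}. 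Picking any $j$ where $v$ and $v'$ differ and relabeling so that $v(j) > v'(j)$, I immediately get $v'(j) \geq u(j) \geq 2d^2 + 4d$ and $v(j) \leq u(j) + d \leq (i-1)d$, as required.

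Thus the entire problem reduces to producing a bad time $m$ with $u_m \in \mathcal{P}$. I plan to argue this by contradiction: assume every bad time has $u_m \notin \mathcal{P}$, equivalently that $v_{i+1}(T^m x) = v_{i+1}(T^m x')$ whenever $u_m \in \mathcal{P}$. Introduce the companion region $\mathcal{P}' := \{w \in \mathcal{V}_{i+1} : 2d^2 + 5d \leq w(\ell) \leq (i-2)d \text{ for all } \ell\}$, which is also non-empty for $i > N$ by the same counting, and which has the useful property that $S(w) \subseteq \mathcal{P}$ for every $w \in \mathcal{P}'$: indeed $u \in S(w)$ forces $u(\ell) \in [w(\ell) - d, w(\ell)] \subseteq [2d^2 + 4d, (i-2)d]$. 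Density of the orbit of $x$ (Proposition \ref{prop:orbits}) guarantees a super-block at every $w \in \mathcal{P}'$; throughout this super-block every $u_m$ lies in $\mathcal{P}$, so the contrary assumption forces $v_{i+1}(T^m x') = w$ as well. Hence the super-blocks of $x$ and $x'$ at every $w \in \mathcal{P}'$ are perfectly synchronized.

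The main obstacle will be propagating this synchronization off of $\mathcal{P}'$ to conclude $v_{i+1}(T^m x) = v_{i+1}(T^m x')$ for \emph{every} $m$, which would contradict that $x, x'$ is a depth $i$ (and not depth $i+1$) pair. My plan is to use Remark \ref{rem:codingbyvertices}(3) and the Vershik block structure: once the super-blocks at vertices of $\mathcal{P}'$ are synchronized, any intermediate super-block at a ``corner'' vertex $w \in \mathcal{V}_{i+1} \setminus \mathcal{P}'$ is sandwiched between two synchronized $\mathcal{P}'$-super-blocks and, by the transitions forced by the Vershik ordering together with the very restricted source structure of corner vertices (Proposition \ref{prop:covered}), must itself be uniquely determined. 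Carrying out this sandwiching argument, using the connectedness from Proposition \ref{prop:toptrans}, is the technical heart of the proof and what I expect to be the hardest step.
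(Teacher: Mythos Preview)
Your reduction is sound up to the point where you claim the sandwiching argument will finish the job; that step is a genuine gap, and I do not see how to close it. You have correctly shown that, under the contrary hypothesis, the super-blocks of $x$ and $x'$ at every $w\in\mathcal P'$ coincide. But at the moment both orbits leave such a $w$ (both sitting on the maximal path into $w$), the Vershik map sends each to the vertex at level $i+1$ determined by its \emph{own} edge at level $i+1$, and those edges are not assumed to agree. So the two orbits can move to \emph{different} corner vertices, and nothing you have established constrains which corner vertices are visited or in what order. Your appeal to ``the very restricted source structure of corner vertices (Proposition \ref{prop:covered})'' does not help: corner vertices in $\mathcal V_{i+1}\setminus\mathcal P'$ need not be covered (your $\mathcal P'$ is much smaller than the uncovered region), and even for covered vertices the source-set containment of Proposition \ref{prop:covered} does not force uniqueness of the transition. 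In effect, completing the sandwiching would amount to proving the main theorem directly, bypassing the entire chain machinery; there is no reason to expect this shortcut to work.

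The paper's proof avoids this altogether by a direct construction rather than a contradiction. It starts from an \emph{arbitrary} bad time (Proposition \ref{prop:distinct}), picks $j$ so that the common large coordinate satisfies $w_0(j)\ge 4d^2+5d$ by pigeonhole, and then repeatedly pushes the orbits through $DSV(w_\ell,j)$: at each step one finds a time $n_\ell$ at which both orbits meet $DSV(w_{\ell-1},j)$ but sit at different level-$(i+1)$ vertices, producing a new $w_\ell$ with $1\le w_{\ell-1}(j)-w_\ell(j)\le d$ (Lemma \ref{lem:BoundedMovement}, Remark \ref{rem:MaxCoordinate}). After $2d+1$ such steps the $j$-th coordinate has dropped by at least $2d+1$ (forcing $w_{2d}(j)\le (i-1)d$) and by at most $(2d+1)d$ (forcing $w_{2d+1}(j)\ge 2d^2+4d$), so $v=w_{2d}$ and $v'=w_{2d+1}$ work. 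The key idea you are missing is this use of distinguished source vertices to \emph{walk} the bad pair into the desired range, rather than trying to argue that a bad pair must already land there.
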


\begin{proof}
    Let $\xi$ be given, $i>N$ and suppose that $x,x'\in X(\xi)$ is a depth $i$ pair. Then by Proposition \ref{prop:distinct}
    there is a time in 
    their orbits such that $x$ and $x'$ pass through different 
    vertices at level $i+1$, say $w_0$ and $w_0'$, respectively. 
    Without loss of generality, assume this happens at time $0$. 
   
   Since $w_0,w_0'\in \mathcal{V}_{i+1}$ and $i+1\geq 
   4dq+6q$, Remark \ref{rem:MaxCoordinate} tells us that they each 
   have a component 
   (not necessarily the same one) of size at least
  \[(i+1)d/q\geq (4dq+6q)d/q=4d^2+6d.\] 

  Let $j\in \{1,\dots,q\}$ such that $w_0'(j)\geq 4d^2+6d$. Since $v_{i}(x)=v_{i}(x')$, $w_0$ and $w_0'$ share a common 
  source vertex and hence each of their components differ by at most 
  $d$. (See Remark \ref{rem:MaxCoordinate}.) Therefore, the $j$'th components of {\em both} vertices are of size at least $4d^2+5d$. Switching names if necessary, assume $4d^2+5d\leq w_0(j)\leq w_0'(j)$. 
   
We do not know if our initial $w_0$ and $w'_0$ meet the conditions 
of (\ref{eqn:ChainStart}) or not. However, the following procedure 
will produce two new vertices that meet the conditions of 
(\ref{eqn:ChainStart}), regardless whether $w_0$ and $w'_0$ do.  We will produce an $x,x'$ chain of length $2d+2$ starting with the link between $w'_0$ and $w_0$ for 
which the $j$'th components of the splitting vertices are 
decreasing by at least one and by at most $d$ each time. Since we know that $4d^2+5d\leq w_0(j)\leq (i+1)d$, the splitting vertex $w_{2d+1}$ at the end of the 
chain must have $j$'th component at most $(i+1)d-(2d+1)=(i-1)d-1<(i-1)d$ but at least $4d^2+5d-(2d+1)d=2d^2+4d>2d$ and hence be uncovered, and have all vertices in its source set uncovered by Corollary \ref{cor:SourceUncovered}. (The ultimate goal is to have the end of this chain start (in the next theorem) a new chain of adequate length for which every splitting vertex has an uncovered source set. The above mentioned vertex $w_{2d+1}$ having $j$'th component $2d^2+4d$ {accomplishes this.)}

    Switching to $T^{-1}$ if necessary, let $m_1>0$ be the first 
    time in the orbit of $x$ such that $v_i(T^{m_1}x)=DSV(w_0,j)$ 
    and for all $m$ in $[0,m_1]$, $v_{i+1}(T^mx)=w_0$. First 
    consider the case in which $v_{i+1}(T^{m_1}x')=w_0$. Then the 
    orbit of $x'$ enters $w_0$ after $x$, so $\xi((T^{m_1}x')_{i})
    <\xi((T^{m_1}x)_{i})$. By density, the orbits of $x$ and $x'$ 
    must eventually leave $w_0$, but $\xi((T^{m_1}x')_{i})
    <\xi((T^{m_1}x)_{i})$ implies this must occur at different 
    times. Let $n_1> m_1$ be the first time that 
    $v_i(T^{n_1}x)=v_i(T^{n_1}x')=DSV(w_0,j)$ and $v_{i+1}
    (T^{n_1}x)\neq v_{i+1}(T^{n_1}x')$. In other words, $n_1$ is 
    the first time after one of these orbits (of $x$ and $x'$) 
    leave $w_0$ that both orbits meet the $DSV(w_0,j)$.

    Without loss of generality, assume  the orbit of $x'$ 
    leaves $w_0$ first. We claim that $v_{i+1}
    (T^{n_1}x)=w_0$. Consider the last time $n'_1<n_1$ 
    that  $v_i(T^{n'_1}x)=v_i(T^{n'_1}x')=DSV(w_0,j)$ and 
    $v_{i+1}(T^{n'_1}x)= v_{i+1}(T^{n'_1}x')=w_0$. Then 
    $\xi((T^{n'_1}x)_i)<\xi((T^{n'_1}x')_i)$ (since $x'$ leaves $w_0$ first) implies the forward 
    orbit of $T^{n'_1}x$ must still pass through $(T^{n_1'}x')_i$ before it can leave $w_0$. In 
    particular this happens after time $n'_1$. Since 
    $n_1$ is the first time after $n'_1$ that the 
    orbits of $x$ and $x'$ visit $DSV(w_0,j)$, it must be 
    the case that the orbit of $x$ is still at 
    $w_0$ at time $n_1$. So we have that $v_{i+1}(T^{n_1}x)=w_0$ and $v_{i+1}(T^{n_1}x')\neq w_0$. See Figure \ref{fig:chainstart}.

    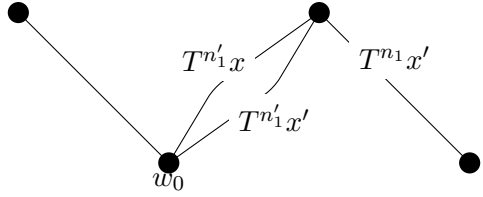
\begin{figure}
        \centering
        \begin{tikzpicture}[scale=2]
        \fill(0,1) circle (2pt);
        \fill(2,1) circle (2pt);
         \fill(1,0) circle (2pt);
        \fill(3,0) circle (2pt);
        \draw(0,1)--(1,0);
        \draw[rounded corners](1,0)--(1.3,.5)--(2,1);
        \draw[rounded corners](1,0)--(1.7,.5)--(2,1);
        \draw(2,1)--(3,0);
        \node[fill=white] at (1.3,.7){$T^{n'_1}x$};
        \node[fill=white] at (1.7,.3){$T^{n'_1}x'$};
        \node[fill=white] at (2.5,.7){$T^{n_1}x'$};
        \node[below] at (1,0){$w_0$};
        \end{tikzpicture}
        \caption{$\xi((T^{n_1'}x)_i)<\xi((T^{n_1'}x')_i)$ and so the orbit of $x$ must pass through $(T^{n_1'}x')_i$ at some time greater than or equal to time $n_1$ and hence is still at vertex $w_0$ at time $n_1$.  }
        \label{fig:chainstart}
    \end{figure}
    
       For the second case in which $v_{i+1}(T^{m_1}x')\neq w_0$, let $n_1=m_1$. Then in either case we have $v_i(T^{n_1}x)=v_i(T^{n_1}x')=DSV(w_0,j)$, $v_{i+1}(T^{n_1}x)=w_0$ and $v_{i+1}(T^{n_1}x')\neq w_0$. Call 
    $v_{i+1}(T^{n_1}x')=w_1$. Then $w_1\neq w_0$, but $DSV(w_0,j)\in S(w_1)$.
     So by Lemma \ref{lem:BoundedMovement} and  Remark \ref{rem:MaxCoordinate}, $1\leq w_0(j)-w_1(j)\leq d$.
     Furthermore, since $w_0'(j)\geq w_0(j)$, Lemma \ref{lem:BoundedMovement} also implies that $DSV(w_0,j)\notin S(w_0')$, indicating that $w_1 \neq w_0'$. 

     Since $S(w_1)\cap S(w_0)\neq \emptyset$, by Remark \ref{rem:MaxCoordinate}, $w_1(j)\geq 4d^2+4d>d$. Hence by Remark \ref{rem:CondForDSV}, $DSV(w_1,j)$ exists. Arguing as above, there exists a smallest $n_2>n_1$ such that 
  $v_i(T^{n_2}x)=v_i(T^{n_2}x')=DSV(w_1,j)$ and $v_{i+1}
  (T^{n_2}x)\neq v_{i+1}(T^{n_2}x')$. 
 Therefore, one of $v_{i+1}(T^{n_2}x)$ and $v_{i+1}
  (T^{n_2}x')$ must be $w_1$. Let the other be $w_2$. Then we have 
  $2\leq w_0(j)-w_2(j)\leq 2d$. Continue in this manner to define 
$w_3, w_2,\dots, w_{2d},w_{2d+1}$. See Figure 
  \ref{fig:ChainToUncovered}.

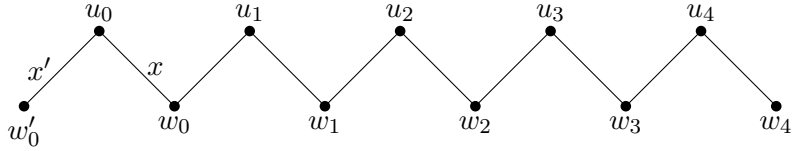
\begin{figure}
    \centering
    \begin{tikzpicture}
        \foreach \k in {0,2,4,6,8,10}{
        \fill(\k,0) circle (2pt);}
        \foreach \k in {1,3,5,7,9}{
        \fill(\k,1) circle (2pt);
        \draw(\k,1)--(-1+\k,0);
        \draw(\k,1)--(1+\k,0);}
        \node[below] at (0,0){$w_0'$};
        \node[below] at (2,0){$w_0$};
        \node[below] at (4,0){$w_1$};
        \node[below] at (6,0){$w_2$};
        \node[below] at (8,0){$w_3$};
        \node[below] at (10,0){$w_4$};
        \node[above] at (1,1){$u_0$};
        \node[above] at (3,1){$u_1$};
        \node[above] at (5,1){$u_2$};
        \node[above] at (7,1){$u_3$};
        \node[above] at (9,1){$u_4$};
        \node[left] at (.5,.5){$x'$};
        \node[right] at (1.5,.5){$x$};
    \end{tikzpicture}
    \caption{We build the chain where for $l\geq 1$, $u_l=v_i(T^{n_l}x)=v_i(T^{n_l}x')$ and $w_l$ is $v_{i+1}(T^{n_l}x)$ or $v_{i+1}(T^{n_l}x')$ and $w_{l-1}$ is the other vertex.}
    \label{fig:ChainToUncovered}
\end{figure}

As $\ell$ increases from $0$ to $2d$, the $j$'th coordinate 
of $w_\ell$ 
decreases by at 
least one each time and by at most $d$ each time. Therefore by the time 
we produce $w_{2d}$, the $j$'th coordinate has decreased by at least 
$2d$ from the $j$'th coordinate of $w_0$ ensuring that $w_{2d}(j)\leq w_0(j)-2d\leq (i+1)d-2d=(i-1) d$. 
Further, the $j$'th coordinate decreases by at most $2d^2$.

Then we have
 \[w_0(j)-w_{2d}(j)\leq 2d^2,   \]
        so that
                \[w_{2d}(j)\geq w_0(j)-2d^2\geq 4d^2+5d-2d^2=2d^2+5d.\]
{Also} $w_{2d}(j)-w_{2d+1}(j)\leq d$ implies that 
\[d\geq w_{2d}(j)-w_{2d+1}(j)\geq 2d^2+5d- w_{2d+1}(j), \]
and hence
\[w_{2d+1}(j)\geq 2d^2+5d-d=2d^2+4d. \]

  Further, since the $j$'th component is decreasing each time, $w_{2d}$ and $w_{2d+1}$ are distinct and $2d^2+4d\leq w_{2d+1}(j)<w_{2d}(j)\leq (i-1) 
  d<id$. Hence, by Remark \ref{rem:uncovered},   $v=w_{2d}$ 
  and $v'=w_{2d+1}$ are distinct uncovered vertices. 
  Therefore we have shown that the (dense) orbits of the 
  depth $i$ pair $x,x'$ pass through distinct uncovered vertices $v$ and $v'$ for which $2d^2+4d\leq v'(j)
  <v(j)\leq (i-1) 
  d<id$. 
\end{proof}

\begin{lemma}
    \label{lem:SourceSet}
    Let $z\in \mathcal{V}_{n+1}$ and $j\in \{1,2,\dots,q\}$ be such that $d\leq z(j)\leq nd$. Then there exist $w_0,w_1,\dots, w_d\in S(z)$ such that $w_{\ell}(j)=z(j)-\ell$ for all $\ell=0,1,\dots,d$. 
\end{lemma}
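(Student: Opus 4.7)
The plan is to construct each $w_\ell$ explicitly as $w_\ell = z - s_\ell$ for a carefully chosen source vector $s_\ell \in S$ with $s_\ell(j) = \ell$. Once this is done, $w_\ell \in S(z)$ by the definition of the source set (Definition \ref{def:sourceset}), and $w_\ell(j) = z(j) - \ell$ is automatic. Distinctness of the $w_\ell$ is then immediate since their $j$'th coordinates differ.

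The key observation is that we have enough ``room'' in the coordinates $i \neq j$ to absorb the remaining $d - \ell$ units of $s_\ell$ without exceeding $z$ componentwise. Specifically, since $|z| = (n+1)d$ and $z(j) \leq nd$,
\begin{equation*}
\sum_{i \neq j} z(i) = (n+1)d - z(j) \geq (n+1)d - nd = d \geq d - \ell
\end{equation*}
for every $\ell \in \{0, 1, \dots, d\}$.

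Given this inequality, for each $\ell$ I would construct $s_\ell$ as follows. Set $s_\ell(j) = \ell$, which is legal since $\ell \leq d \leq z(j)$. Then distribute the remaining $d - \ell$ units greedily among the coordinates $i \neq j$: process the indices $i \neq j$ in any fixed order, and at each step set $s_\ell(i) = \min\{z(i),\, r\}$, where $r$ is the amount still to be distributed. The bound above guarantees the process terminates with $r = 0$, so $s_\ell(i) \leq z(i)$ for every $i \neq j$ and $\sum_{i \neq j} s_\ell(i) = d - \ell$. Hence $s_\ell \geq 0$, $|s_\ell| = d$, so $s_\ell \in S$, and $w_\ell = z - s_\ell \geq 0$ coordinatewise, confirming $w_\ell$ is a legitimate vertex in $\mathcal V_n$ with $w_\ell \in S(z)$.

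There is no real obstacle here; the argument is a counting check on the coordinate sums, and the only constraint used is the hypothesis $d \leq z(j) \leq nd$ (the lower bound ensures $s_\ell(j) = \ell$ is feasible; the upper bound ensures enough mass exists outside coordinate $j$ to complete $s_\ell$ to weight $d$). The $w_\ell$ so constructed are automatically pairwise distinct because $w_\ell(j) = z(j) - \ell$ takes distinct values as $\ell$ ranges over $\{0, 1, \dots, d\}$.
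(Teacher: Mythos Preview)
Your proof is correct and follows essentially the same approach as the paper: both construct source vectors $s_\ell \in S$ with $s_\ell(j)=\ell$ and $s_\ell \leq z$ componentwise, then set $w_\ell = z - s_\ell$. The only cosmetic difference is that the paper builds the $s_\ell$ recursively (shifting one unit at a time from some coordinate $i\neq j$ to coordinate $j$, i.e.\ $s_\ell = s_{\ell-1} - e_{j_\ell} + e_j$), whereas you construct each $s_\ell$ independently by a greedy distribution; both rely on the same key inequality $\sum_{i\neq j} z(i) \geq d$.
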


\begin{proof}
Since $d\leq z(j)\leq nd$ and $\sum_{i=1}^qz(i)=
(n+1)d$, we have $d\leq \sum_{i\neq j}z(i)\leq nd$. We can then find an ${s}_0\in S$ such that $s_0(j)=0$, $\sum_{i\neq 
j}s_0(i)=d$ and $z-s_0$ has all nonnegative coordinates. Hence 
$w_0=z-s_0\in \mathcal{V}_n$. 
This means $w_0\in S(z)$ and $w_0(j)=z(j)$. We will proceed in a recursive 
manner. Since $\sum_{i\neq j}s_0(i)=d$, there exists a $j_1\neq 
j$ such that $s_0(j_1)\geq 1$. 
Define $s_1=s_0-e_{j_1}+e_j$ 
(recalling that $e_{j_1}$ and $e_j$ are standard basis vectors in 
$q$-space) and let  ${w}_1={z}-{s_1}$. 
Then $w_1$ has all 
coordinates nonnegative and hence is in $\mathcal{V}_n$ and $S(z)$. 
Further, $\sum_{i\neq j}s_1(i)=d-1$, $s_1(j)=1$, and $w_1(j)=z(j)-s_1(j)=z(j)-1$. We continue recursively, producing 
$s_{\ell}\in S(z)$ from $s_{\ell-1}$, for each $\ell \leq d$.  
Specifically, given that $\sum_{i\neq j}s_{\ell-1}(i)=d-(\ell-1)$ and $s_{\ell -1}(j)=\ell-1$ for some $\ell \leq d$, choose $j_{\ell}\neq j$ such that $s_{\ell-1}(j_{\ell})\geq 1$ 
and define $s_{\ell}=s_{\ell-1}-e_{j_{\ell}}+e_j$. Then define 
$w_{\ell}=z-s_{\ell}$. So, $s_{\ell}(j)=\ell$, $\sum_{i\neq j}s_{\ell}(i)=d-\ell$, and $w_{\ell} \geq 0$. Hence $w_{\ell} \in S(z)$ and 
$w_{\ell}(j)=z(j)-\ell$. 
\end{proof}

Recall that, given an ordering $\xi$, $X(\xi)$ denotes the set of paths which are not in the orbit of any maximal or minimal path and have a dense (biinfinite) orbit. By Propositions \ref{prop:minmax} and \ref{prop:measures} (and Remark \ref{rem:measures}), $X(\xi)$ is comeager and has full measure with respect to every fully supported ergodic invariant measure on $X$.

\begin{theorem}\label{thm:main}
  Suppose we have an unordered polynomial shape Bratteli diagram $\mathcal B=(\mathcal V,\mathcal E)$, with path space $X$. Then for any ordering $\xi$ of the diagram, there exists $N$ such that for each $i\geq N$ there is no depth $i$ pair of paths in $X(\xi)$. That is, the Bratteli diagram defining $X$ is inherently expansive: for every ordering $\xi$ of $X$ the Vershik map is expansive on $X(\xi)$.
\end{theorem}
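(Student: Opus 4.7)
The plan is a proof by contradiction, assembling the tools developed in the previous sections. Fix a large $N$, at least $4dq+6q$ (so that Lemma \ref{lem:GettingToTheGivenMV} applies and so the chain we build fits within the index range required by Lemma \ref{lem:LinkLemma}), and assume for contradiction that a depth $i$ pair $x, x' \in X(\xi)$ exists for some $i \geq N$. First, apply Lemma \ref{lem:GettingToTheGivenMV}; after a time shift placing the relevant moment at $t=0$, this gives distinct uncovered vertices $w_0 = v_{i+1}(x)$ and $w_1 = v_{i+1}(x')$ at level $i+1$, together with a direction $j$, satisfying
\[
2d^2+4d \leq w_1(j) < w_0(j) \leq (i-1)d.
\]
These will serve as the first two splitting vertices of a distinguished $x,x'$ chain in direction $j$.

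Next, build the chain iteratively by repeated applications of the Link Lemma (Lemma \ref{lem:LinkLemma}), alternating the roles of $x$ and $x'$ and advancing the time-origin to the previous switching time at each step, to produce a straight $x,x'$ chain
\[
w_0 \overset{u_0}{\longleftrightarrow} w_1 \overset{DSV(w_1,j)}{\longleftrightarrow} w_2 \overset{DSV(w_2,j)}{\longleftrightarrow} \cdots \overset{DSV(w_{2d+1},j)}{\longleftrightarrow} w_{2d+2}
\]
of length $2d+2$, with switching times $0 = m_0 < m_1 < \cdots < m_{2d+1}$. At each step Lemma \ref{lem:BoundedMovement} forces $1 \leq w_\ell(j) - w_{\ell+1}(j) \leq d$, so the $j$-coordinates strictly decrease and stay within $[2d^2+4d - (2d+1)d,\, (i-1)d] \subseteq [2d,(i-1)d]$. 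By Remark \ref{rem:uncovered} and Corollary \ref{cor:SourceUncovered}, every splitting vertex is therefore uncovered with uncovered source set, so each hypothesis of the next application of Lemma \ref{lem:LinkLemma} is satisfied and all the $w_\ell$ are automatically distinct. The alternation built into Definition \ref{def:xchain} guarantees that the orbit of $x$ visits $w_0, w_2, w_4,\dots$ and the orbit of $x'$ visits $w_1, w_3, w_5,\dots$ in order through the switching times.

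To extract the contradiction, look at behaviour at level $i+2$. By density of the two orbits, one of them, say $x'$, is the first to change its level-$(i+2)$ vertex. Using the length $2d+3$ of the chain, argue that even after $x'$ changes its level-$(i+2)$ vertex twice it is still at a splitting vertex in the chain; let $z \in \mathcal V_{i+2}$ be the second level-$(i+2)$ vertex visited by $x'$ during this traversal. Since any two splitting vertices $w_a, w_b$ that can simultaneously lie in $S(z)$ must satisfy $|w_a(j)-w_b(j)| \leq d$ by Remark \ref{rem:MaxCoordinate}, and the distinguished chain forces the $j$-coordinates to strictly decrease along the chain, only a short consecutive block of splitting vertices can belong to $S(z)$. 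Apply Lemma \ref{lem:SourceSet} in direction $j$ to produce $d+1$ explicit source vertices of $z$ with prescribed $j$-coordinates $z(j), z(j)-1, \dots, z(j)-d$; a counting argument shows that at least one of these $d+1$ vertices is not among the splitting vertices visited by $x'$ while at $z$, and because the $x'$-orbit never leaves the chain during its stay at $z$, this vertex is simply not visited before $x'$ changes level-$(i+2)$ vertex again. This contradicts the definition of the Vershik map, which requires the orbit to cycle through every vertex of $S(z)$ before leaving $z$.

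The principal obstacle is the bookkeeping in the last paragraph: verifying that $2d+3$ splitting vertices really is enough for $x'$ to switch twice at level $i+2$ while remaining in the chain, and then showing that among the $d+1$ vertices produced by Lemma \ref{lem:SourceSet} there is genuinely one that the orbit of $x'$ never meets during the relevant time window. The Pascal sketch ($d=1$, where non-adjacent chain vertices cannot share a level-$(i+2)$ target) hides this combinatorics; for general $d$ the distinguished-direction structure and the strict decrease of $j$-coordinates must interact precisely with the explicit enumeration from Lemma \ref{lem:SourceSet} to rule out every candidate, and choosing the chain length as $2d+2$ is exactly what gives the needed slack.
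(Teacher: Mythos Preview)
Your proposal is correct and follows the paper's approach closely: use Lemma~\ref{lem:GettingToTheGivenMV} to seed a link, iterate the Link Lemma to build a long distinguished $x,x'$ chain in direction $j$, and then read off a contradiction at level $i+2$ via Lemma~\ref{lem:SourceSet}. Two points of divergence from the paper are worth tightening. First, the chain length: you write $2d+2$ in the displayed chain and in the final sentence, but $2d+3$ elsewhere; the paper uses length $2d+3$ (splitting vertices $w_0,\dots,w_{2d+3}$), and that extra link is what guarantees $w_{2d+3}\notin S(z)$, forcing $x'$ to leave $z$ while still inside the chain---with only $2d+2$ this step can fail when $t$ is near $d+2$. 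Second, the endgame is not really a counting argument and there is no ``say $x'$'' symmetry-breaking: the paper simply follows $x'$ (which, by the Link Lemma's ``moreover'' clause, visits only the odd $w_\ell$), observes that consecutive odd splitting vertices have $j$-coordinates differing by at least $2$, and then uses Lemma~\ref{lem:SourceSet} to produce a single $v\in S(z)$ with $v(j)=w_t(j)\pm 1$; since no odd $w_\ell$ has that $j$-coordinate, $v$ is never visited while $x'$ is at $z$, contradicting the Vershik map. Your counting version (at most $\lceil (d+1)/2\rceil$ of the $d+1$ consecutive $j$-values can appear among the odd $w_\ell$) reaches the same conclusion, just less directly.
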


\begin{proof}
    Let $\xi$ be an ordering of $\mathcal B$ and choose $N=4dq+6q$.
    Suppose to the contrary that for some $i\geq N$ there 
    exists a depth $i$ pair $x, x'$ in $X(\xi)$. 
    Then by Lemma \ref{lem:GettingToTheGivenMV}, there exists a 
    time when the orbits of $x$ and $x'$ pass through distinct 
    uncovered vertices $w_0$ and $w_1$ respectively at level 
    $i+1$ such that for some $j\in \{1,2,\dots,q\}$, $
    2d^2+4d\leq w_1(j)<w_0(j)\leq (i-1)d$. Then, by Lemma 
    \ref{lem:LinkLemma}, the link $w_0 \overset{u_1}{\longleftrightarrow} w_1$ extends to a distinguished $x,x'$ 
    chain of length 2 
 \[w_0 \overset{u_1}{\longleftrightarrow} w_1\overset{DSV(w_1,j)}{\longleftrightarrow}w_2\]
 for which there exists an $m_1$ such that $v_{i+1}(T^{m_1}x')=w_1$, $v_{i+1}(T^{m_1}x)=w_2$, and for all $m\in [0,m_1]$, we have $v_{i+1}(T^mx')=w_1$ and $v_{i+1}(T^mx)\in \{w_0,w_2\}$.
    
 Since $DSV(w_1,j)\in S(w_2)$, by Lemma 
 \ref{lem:BoundedMovement} and {Remark \ref{rem:MaxCoordinate}}, $1\leq w_1(j)-w_2(j)\leq d$. Together 
 with the fact that $2d^2+4d\leq w_1(j)\leq (i-
 1)d $, we have that $2d<2d^2+3d\leq w_2(j)\leq (i-1)d$. Hence, 
 by Lemma \ref{lem:LinkLemma}, we can extend the chain again. 
 Continue in this manner to produce a distinguished $x,x'$ chain:
   \[ w_0 \overset{u_1}{\longleftrightarrow} w_1 \overset{DSV(w_1,j)}
   {\longleftrightarrow} w_2{\longleftrightarrow}\dots 
   {\longleftrightarrow}w_{2d+3}.\]
Specifically, given $1<\ell\leq 2d+2$ and a $w_\ell$, we 
have $2d\leq 2d^2+4d-(\ell -1)d\leq w_{\ell}(j)\leq (i-1)d$. Then use 
the Link Lemma to extend the chain to $w_{\ell+1}$, where for $\ell >1$, $u_{\ell+1}=DSV(w_{\ell},j)$ and by Lemma \ref{lem:BoundedMovement}, 
$w_0(j)>w_1(j)>\dots>w_{2d+3}(j)$. 
This means all the splitting vertices are distinct, and by Remark 
\ref{rem:uncovered}, uncovered.

 By construction of the distinguished chain via the Link Lemma (Lemma \ref{lem:LinkLemma}), the orbit of $x$ only passes {through even}
 numbered splitting vertices and the orbit of $x'$ only passes {through odd} numbered splitting vertices, while moving through the chain.  Further, 
 since the $j'$th component of the splitting vertices are strictly 
 decreasing, for all $\ell\geq 2$, $w_{\ell}(j)-w_{\ell+1}(j)\geq 1$ for all $\ell \in \{1,\dots,2d+2\}$. So the $j$'th components of odd vertices differ by at least 2. Since the orbit of $x'$ passes {through only odd numbered} vertices, then for each $l\in\{0,1,\dots,d+1\}$, there is no $m\in [0,m_{2d+2}]$ such 
 that $v_{i+1}(T^mx')(j)=w_{2l+1}(j)-1$ or $w_{2l+1}(j)+1$. 
 See Figure \ref{fig:FullChain}.

  \begin{figure}
    \centering
    \begin{tikzpicture}
        \foreach \k in {0,2,4,6}{
        \fill(\k,0) circle (2pt);}
        \foreach \k in {1,3,5}{
        \fill(\k,1) circle (2pt);
        \draw(\k,1)--(-1+\k,0);
        \draw(\k,1)--(1+\k,0);}
        \node[below] at (0,0){$w_0$};
        \node[below] at (2,0){$w_1$};
        \node[below] at (4,0){$w_2$};
        \node[below] at (6,0){$w_3$};
        \node at (7,.5){$\dots$};
        \node[above] at (1,1){$u_0$};
        \node[above] at (3,1){$u_1$};
        \node[above] at (5,1){$u_2$};
        \node[left] at (.5,.5){\tiny $x$};
        \node[fill=white] at (1.5,.6){\tiny $x'$};
        \node[fill=white]  at (2.5,.4){\tiny $T^{m_1}x'$};
        \node[fill=white]  at (3.5,.6){\tiny $T^{m_1}x$};
         \node[fill=white] at (4.5,.4){\tiny $T^{m_2}x$};
         \node[fill=white]  at (5.5,.6){\tiny $T^{m_2}x'$};
 \foreach \k in {8,10,12}{
        \fill(\k,0) circle (2pt);}
        \foreach \k in {9,11}{
        \fill(\k,1) circle (2pt);
        \draw(\k,1)--(-1+\k,0);
        \draw(\k,1)--(1+\k,0);}
        \node[below] at (8,0){$w_{2d+1}$};
        \node[below] at (10,0){$w_{2d+2}$};
        \node[below] at (12,0){$w_{2d+3}$};
        \node[above] at (9,1){$u_{2d+1}$};
        \node[above] at (11,1){$u_{2d+2}$};
        \node[fill=white]  at (8.25,.35){\tiny $T^{m_{2d+1}}x'$};
        \node[fill=white]  at (9.5,.65){\tiny $T^{m_{2d+1}}x$};
         \node[fill=white] at (10.75,.35){\tiny $T^{m_{2d+2}}x$};
         \node[fill=white]  at (12,.65){\tiny $T^{m_{2d+2}}x'$};        
           \end{tikzpicture}
    \caption{For $l\geq 1$, $u_l=DSV(w_l,j)$.}
    \label{fig:FullChain}
 \end{figure}
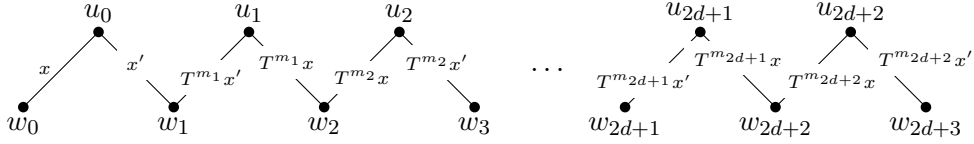

 To understand the switching of vertices at level $i+1$ of the two orbits, it is necessary to consider vertices at levels lower than $i+1$. In fact, we shall see that we do not have to look any lower than level $i+2$. 
 We will show that while the chain is being traversed the orbit of $x'$ hits a vertex z at level i+2 such that it cannot hit all vertices in S(z).\\

 Let $v_{i+2}(x')=z'$. Then $w_1\in S(z')$. Recall that for 
 $l\geq 2$,  $w_{1}(j)-w_l(j)\geq l-1$. 
 Since the $j$'th coordinates of vertices in the same source set differ by at most $d$ (Remark 
 \ref{rem:MaxCoordinate}), for $\ell \geq d+2$, $w_l\notin S(z')$. 
 Therefore there is an $m\in (0,m_{d+2}]$ such that $T^mx'$ is minimal into 
 level $i+2$ and $v_{i+2}(T^mx')=z\neq z'$. Further, $v_{i+1}(T^mx')$ 
 is some odd numbered splitting vertex $w_t$ with $t\leq d+2$. 
  By the way we have constructed our chain, we have that $2d\leq w_{t}(j)\leq (i-1)d$. Since $w_t\in S(z)$, it follows that $2d\leq z(j)\leq id$. 

Let $r=w_t(j)$, where $w_t$ is as above, so that $z(j) \geq r$.
 In the case $z(j)=r$, by Lemma \ref{lem:SourceSet}, we would have a vertex $v\in S(z)$ such that $v(j)=z(j)-1=r-1$. 
 In the case that $r+1\leq z(j)\leq r+d$, $z(j)-(r+1)$ is in the set $\{0,\dots, d-1\}$. Then by Lemma \ref{lem:SourceSet}, there is a vertex $v\in S(z)$ such that $v(j)=z(j)-(z(j)-(r+1))=r+1$. 
 Combining these two cases we have that there is a vertex $v\in S(z)$ 
 such that either $v(j)=r-1$ or $v(j)=r+1$.
 
 Notice that since $t\leq d+2$,
 $w_{t}(j)-w_{2d+3}(j)\geq w_{d+2}(j)-w_{2d+3}(j)\geq 2d+3-(d+2)=d+1$. 
 Hence (again by Remark \ref{rem:MaxCoordinate}) 
 $v_{i+1}(T^{m_{2d+2}}x')=w_{2d+3}\notin S(z)$. See Figure \ref{fig:LiP2}.
 
 \begin{figure}
     \centering
    \begin{tikzpicture}
    \draw(0,1)--(1,2)--(2,1)--(3,0);
    \draw (5,2)--(6,1)--(6,0)--(7.5,1);
    \draw (4.5,1)--(6,0);
    \draw (10,2)--(11,1)--(10,0);
    \fill (0,1) circle (2pt);
    \fill (1,2) circle (2pt);
    \fill (2,1) circle (2pt);
    \fill (3,0) circle (2pt);
    \fill (4.5,1) circle (2pt);
    \fill (5,2) circle (2pt);
    \fill (6,1) circle (2pt);
    \fill (7.5,1) circle (2pt);
    \fill (6,0) circle (2pt);
    \fill (11,1) circle (2pt);
    \fill (10,2) circle (2pt);
    \fill (10,0) circle (2pt);
    \node[below] at (0,1){$w_0$};
    \node[below left] at (2,1){$w_1$};
    \node[below] at (3,0){$z'$};
    \node[right] at (6,1){$w_l$};
    \node[below] at (6,0){$z$};
    \node[right] at (11,1){$w_{2d+3}$};
    \node[below] at (10,0){$z''\neq z$};
    \node[fill=white]  at (2.5,.5){\tiny $x'$};
    \node[fill=white]  at (6,.6){\tiny $T^mx'$};
    \node[fill=white]  at (10.5,.5){\tiny $T^{m_{2d+2}}x'$};
        \node[fill=white]  at (1.5,1.5){\tiny $x'$};
    \node[fill=white]  at (5.5,1.6){\tiny $T^mx'$};
    \node[fill=white]  at (10.5,1.5){\tiny $T^{m_{2d+2}}x'$};
    \node at (3,1){$\dots$};
    \node at (8.5,1){$\dots$};
    \end{tikzpicture}
     \caption{The orbit of $x'$ must pass through at least 3 distinct vertices at level $i+2$ between time $0$ and $m_{2d+2}$. So it must pass through all the vertices in $S(z)$ between time $1$ and $m_{2d+2}$.}
     \label{fig:LiP2}
 \end{figure}
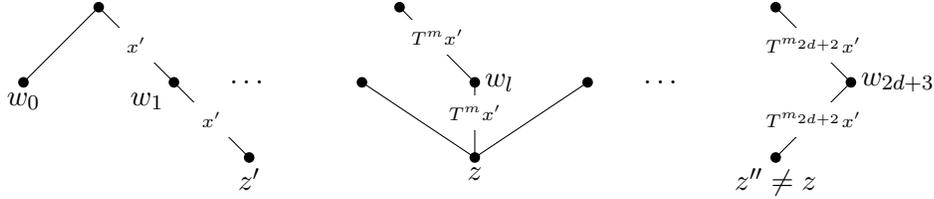
 
 By the 
 definition of the Vershik map, between time $m$ and time $m_{2d+2}$ 
 the orbit of $x'$ must pass through all of the vertices in $S(z)$. 
 However, this contradicts the fact that there is no $m'\in [0,m_{2d+2}]$ 
 such that $v_{i+1}(T^{m'}x')(j)=r-1$ or $r+1$. Therefore, there can be no depth $i$ pair of paths, each with a dense infinite orbit. 
\end{proof}

{\begin{remark}
    {The constructions and arguments presented above might extend to more general systems, beyond polynomial shape. 
    	As a first step, one could consider systems defined by different polynomials at different levels, being careful about the degree and the number of variables at each level. 
    	Even more generally, the properties of polynomial shape systems that allow for the construction of chains can be abstracted to define a class of diagrams, 
    	free of coordinate representations for vertices and all the accompanying structure, for which one might still be able to prove inherent expansiveness. 
    	Complete details have not yet been established.}
    	    \end{remark}}

\begin{bibdiv}
\begin{biblist}
	\bibselect{PolyShape}
\end{biblist}
\end{bibdiv}
\end{document}